\theoremstyle{plain}
\newtheorem{theorem}{Theorem}[section]
\newtheorem{thm}[theorem]{Theorem}
\newtheorem{cor}[theorem]{Corollary}
\newtheorem{prop}[theorem]{Proposition}
\newtheorem{lem}[theorem]{Lemma}
\newtheorem{conj}[theorem]{Conjecture}
\newtheorem{defi}[theorem]{Definition}
\newtheorem{example}[theorem]{Example}
\newcommand{\Glie}{\mathfrak{g}}
\newcommand{\Yim}{\mathcal{Y}}
\newcommand{\ZZ}{\mathbb{Z}}
\newcommand{\CC}{\mathbb{C}}
\newcommand{\QQ}{\mathbb{Q}}
\newcommand{\C}{\mathbb{C}}
\newcommand{\Z}{\mathbb{Z}}
\newcommand{\g}{\mathfrak{g}}
\newcommand{\bo}{\mathfrak{b}}
\newcommand{\tb}{\mathbf{\mathfrak{t}}}
\newcommand{\ga}{\overline{\alpha}}
\newcommand{\gb}{\dot{\mathfrak{g}}}
\newcommand{\Psib}{\mbox{\boldmath$\Psi$}}
\newcommand{\Psibs}{\scalebox{.7}{\boldmath$\Psi$}}
\newcommand{\qbin}[2]{{\left[
\begin{matrix}{\,\displaystyle #1\,}\\
{\,\displaystyle #2\,}\end{matrix}
\right]
}}
\newcommand{\nc}{\newcommand}
\nc{\on}{\operatorname}
\nc{\la}{\lambda}
\nc{\wh}{\widehat}
\newtheorem{rem}[theorem]{Remark}
\begin{document}

\begin{title}
{Baxter's relations and spectra of quantum integrable models}
\end{title}

\author[Edward Frenkel]{Edward Frenkel}

\address{Department of Mathematics, University of California,
  Berkeley, CA 94720, USA}

\author[David Hernandez]{David Hernandez}

\address{Sorbonne Paris Cit\'e, Univ Paris Diderot, CNRS Institut de
  Math\'ematiques de Jussieu-Paris Rive Gauche UMR 7586,
B\^atiment Sophie Germain, Case 7012,
75205 Paris Cedex 13, France.}

\begin{abstract} Generalized Baxter's relations on the
  transfer-matrices (also known as Baxter's $TQ$ relations) are
  constructed and proved for an arbitrary untwisted quantum affine
  algebra. Moreover, we interpret them as relations in the
  Grothendieck ring of the category $\mathcal{O}$ introduced by Jimbo
  and the second author in \cite{HJ} involving infinite-dimensional
  representations constructed in \cite{HJ}, which we call here
  ``prefundamental''. We define the transfer-matrices associated to
  the prefundamental representations and prove that their eigenvalues
  on any finite-dimensional representation are polynomials up to a
  universal factor. These polynomials are the analogues of the
  celebrated Baxter polynomials. Combining these two results, we
  express the spectra of the transfer-matrices in the general quantum
  integrable systems associated to an arbitrary untwisted quantum
  affine algebra in terms of our generalized Baxter polynomials. This
  proves a conjecture of Reshetikhin and the first author formulated
  in 1998 \cite{Fre}. We also obtain generalized Bethe Ansatz
  equations for all untwisted quantum affine algebras.
\end{abstract}

\dedicatory{To Victor Kac on his birthday}

\maketitle

\setcounter{tocdepth}{1}
\tableofcontents

\section{Introduction}

The partition function $Z$ of a quantum model on an $M \times N$ lattice
may be written in terms of the eigenvalues of the row-to-row transfer
matrix $T$:
$$
Z = \on{Tr} T^M = \sum_i \la_i^M.
$$
Therefore, to find $Z$, one needs to find the spectrum of $T$.

In his seminal 1971 paper \cite{Baxter}, R. Baxter tackled this
question for the so-called eight-vertex model, in which $T$ acts on
the vector space $(\C^2)^{\otimes N}$. In the special case that the
parameters satisfy the ``ice condition'' (then it is called the
six-vertex model) the spectrum of the model was previously found by
E. Lieb \cite{Lieb1,Lieb2,Lieb3} (see also \cite{Sut}) using
an explicit construction of eigenvectors now referred to as Bethe
Ansatz. Analyzing this result, Baxter observed that the eigenvalues
of $T$ on these eigenvectors always have the form
\begin{equation}    \label{relB}
A(z) \frac{Q(zq^2)}{Q(z)} + D(z) \frac{Q(zq^{-2})}{Q(z)},
\end{equation}
where $Q(z)$ is a polynomial, $z,q$ are parameters of the model,
and the functions $A(z), D(z)$ are the same for all eigenvalues.
Furthermore, Baxter realized that the condition that the seeming poles
of the above expression, occurring at the roots of $Q(z)$, cancel each
other is equivalent to the Bethe Ansatz equations guaranteeing that
the vectors constructed by the Bethe Ansatz are indeed
eigenvectors. Thus, apart from the factors $A(z)$ and $D(z)$ which are
universal, the spectrum of $T$ is essentially determined by the
polynomials $Q(z)$ satisfying this condition (provided that the Bethe
Ansatz gives us all eigenvectors).\footnote{In his paper, Baxter went
  on to generalize this relation to the general eight-vertex model,
  for which Bethe Ansatz was not available, but this is beyond the
  scope of the present paper.}

The polynomial $Q(z)$ is now called {\em Baxter's polynomial}, and
relation (\ref{relB}) is called {\em Baxter's relation} (or Baxter's
$TQ$ relation). It looks rather mysterious. Why should such a relation
hold?

To gain insights into this question, we present a modern
interpretation of Baxter's result in a broader context of quantum
groups. Consider the quantum affine algebra $U_q ({\mathfrak g})$
associated to an untwisted affine Kac--Moody algebra. The completed
tensor square of this algebra contains the universal $R$-matrix
${\mathcal R}$ satisfying the Yang--Baxter relation and other
properties. Given a finite-dimensional representation $V$ of $U_q
({\mathfrak g})$, we construct the transfer-matrix
$$
t_V(z) = \on{Tr}_V (\pi_{V(z)} \otimes \on{id})({\mathcal R}),
$$
where $V(z)$ is a twist of $V$ by a ``spectral parameter'' $z$. It
turns out that
$$
[t_V(z),t_{V'}(z')] = 0
$$
for all $V, V'$ and $z,z'$. Therefore these transfer-matrices give
rise to a family of commuting operators on any finite-dimensional
representation $W$ of $U_q ({\mathfrak g})$.

In the special case that ${\mathfrak g} = \wh{sl}_2$, $V$ a simple
two-dimensional representation of $U_q (\widehat{sl}_2)$, and $W$
the tensor product of $N$ two-dimensional representations, the
operator $t_V(z)$ acting on $W$ becomes Baxter's transfer-matrix. This
makes it clear that an analogue of Baxter's problem may be formulated
for an arbitrary quantum affine algebra $U_q({\mathfrak g})$. Namely,
it is the problem of describing the eigenvalues of the
transfer-matrices $t_V(z)$ on finite-dimensional representations $W$
of $U_q({\mathfrak g})$. It is known that these eigenvalues appear as
the spectra of quantum integrable systems generalizing the six-vertex
model (more precisely, generalizing the $XXZ$ model, whose spectrum is
the same as that of the six-vertex model). Hence a solution of this
problem has immediate applications in statistical mechanics.

In \cite{Fre}, N. Reshetikhin and the first author found a novel and
general way to describe the eigenvalues of the transfer-matrices for
an arbitrary (untwisted) quantum affine algebra, generalizing Baxter's
formula. The idea was to use the $q$-characters of finite-dimensional
representations of quantum affine algebras introduced in \cite{Fre}
(note that a similar notion for representations of the Yangians was
introduced earlier by H. Knight). The $q$-character is a homomorphism
of rings
$$
\chi_q: \on{Rep} \; U_q ({\mathfrak g}) \to \Z[Y_{i,a}^{\pm
  1}]_{i \in I, a \in \C^\times},
$$
where $I$ is the set of vertices of the Dynkin diagram of the
finite-dimensional simple Lie algebra underlying ${\mathfrak
  g}$. For example, if ${\mathfrak g} = \wh{sl}_2$ and $V$ is a
two-dimensional representation, then there is $a\in\CC^*$ such that
$$
\chi_q(V) = Y_{1,a} + Y_{1,aq^2}^{-1}.
$$
Roughly speaking, the point is that the above Baxter relation (after
renaming the variables $z \mapsto aq$) may be obtained from this
formula if we substitute
$$
Y_{1,a} \mapsto \frac{Q(aq^{-1})}{Q(aq)}
$$
(to simplify matters, we are dropping the factors $A(z)$ and $D(z)$
for now; but they can be easily taken into account). This gives us a
way to generalize Baxter's formula.

Namely, the following conjecture was proposed in \cite{Fre}: Given a
finite-dimensional representation $V$ of $U_q ({\mathfrak g})$, all of
the eigenvalues of $t_V(a)$ on any irreducible finite-dimensional
representation $W$ may always be written in the following form: we
take the $q$-character $\chi_q(V)$ and substitute in it
$$
Y_{i,a} \mapsto \frac{Q_{i,aq_i^{-1}}}{Q_{i,aq_i}}, \qquad i \in I,
$$
where $Q_{i,a}$ is the product of two factors: one of them is the same
for all eigenvalues (it depends only on $W$) and the other is a
polynomial -- these are the analogues of Baxter's polynomial. (Here
$q_i = q^{d_i}$, see Section \ref{debut}; the precise statement is
in Theorem \ref{fdcase}.)

We remark that in various special cases, a similar conjectural
description of the eigenvalues of the transfer-matrices was proposed
by N. Reshetikhin \cite{R1,R2,R3}; V. Bazhanov and N. Reshetikhin
\cite{BR}; and A. Kuniba and J. Suzuki \cite{KS}.

\smallskip

In this paper we prove the general conjecture of \cite{Fre} about the
eigenvalues of the transfer-matrices in a deformed setting (this means
that the trace used in the above formula for the transfer-matrix is
replaced by the twisted trace that depends on additional parameters
$u_i, i \in I$; see Definition \ref{gr tr}). Among other things, our
proof gives a conceptual explanation of Baxter's relation, and its
generalizations, in terms of representation theory of quantum affine
algebras.

\smallskip

Our proof is based on two results which are of independent interest.

\smallskip

First, we show that the above $Q_{i,a}$ is itself an eigenvalue of a
transfer-matrix (a generalization of Baxter's $Q$-operator) -- the one
associated to what we call here the $i$th {\em prefundamental}
representation $L^+_{i,a}$. This is an infinite-dimensional
representation of the Borel subalgebra (in the Kac--Moody sense) $U_q
({\mathfrak b})$ of $U_q({\mathfrak g})$ that was introduced by
M. Jimbo and the second author in \cite{HJ}.

In order to explain what it is, recall the classification of
irreducible finite-dimensional representations of $U_q({\mathfrak g})$
due to Drinfeld \cite{Dri2} and Chari--Pressley \cite{CP}. The algebra
$U_q ({\mathfrak g})$ has loop generators $x^\pm_{i,n}, i \in I, n \in
\Z$; $h_{i,n}, i \in I, n \neq 0$; and $k_i^{\pm 1}, i \in I$. Each
irreducible finite-dimensional representation is generated by a
``highest weight vector'', that is, a vector annihilated by
$x^+_{i,n}, i \in I, n \in \Z$, which is an eigenvector of the loops
to Cartan generators $h_{i,n}, i \in I, n \neq 0$ and $k_i^{\pm
  1}$. Furthermore, the eigenvalue of their generating function
$$
\phi^\pm_i(z) = k^{\pm 1}_i \exp \left( \pm (q_i-q_i^{-1}) \sum_{n>0}
h_{i,\pm n} z^{\pm n} \right)
$$
is the expansion in $z^{\pm 1}$ of the rational function $q_i^{\on{deg}
  P_i} P_i(zq_i^{-1})/P_i(zq_i)$, where $P_i(z)$ is a polynomial with
constant term $1$. These polynomials, called Drinfeld polynomials,
record the ``highest $\ell$-weight'' of the representation.

In \cite{HJ}, M. Jimbo and the second author extended the category of
finite-dimensional representations of $U_q ({\mathfrak g})$ to a
category denoted by ${\mathcal O}$. This is a category of (possibly
infinite-dimensional) representations of $U_q ({\mathfrak b})$ which
have weight decomposition with respect to the finite-dimensional
Cartan subalgebra generated by $k_i^{\pm 1}$ and such that all weight
components are finite-dimensional. It was shown in \cite{HJ} that
irreducible representations of this category are also generated by
highest weight vectors (in the above sense), but the corresponding
highest $\ell$-weights (the eigenvalues of $\phi^\pm_j(z)$) are given
by arbitrary rational functions which are regular and non-zero at the
origin (but may have a zero or a pole at infinity).

The $i$th prefundamental representation $L^+_{i,a}$ is then by
definition the representation for which the eigenvalue of
$\phi^\pm_j(z)$ on the highest weight vector is equal to $1$ if $j
\neq i$ and to $1-za$ if $j=i$.

As far as we know, such representations were first constructed in the
case of ${\mathfrak g} = \wh{sl}_2$ by V. Bazhanov, S. Lukyanov, and
A. Zamolodchikov \cite{BLZ,BLZ1}. Their construction was subsequently
generalized to ${\mathfrak g} = \wh{sl}_3$ in \cite{BHK}, and to
${\mathfrak g} = \wh{sl}_{n+1}$ with $i = 1$ in \cite{Ko}. For general
${\mathfrak g}$, the prefundamental representations were constructed
in \cite{HJ}.

A marvelous insight of \cite{BLZ,BLZ1} was the identification of the
transfer-matrix of this representation in the case of ${\mathfrak g} =
\wh{sl}_2$ with the Baxter operator. From the point of view discussed
above, this enables one to interpret Baxter's $TQ$ relation as a
relation in the Grothendieck ring of the category $\mathcal{O}$. Here
we generalize this result to all untwisted quantum affine algebras.

\medskip

Namely, we establish the following relation in the Grothendieck ring
of ${\mathcal O}$, generalizing the Baxter relation: for any
finite-dimensional representation $V$ of $U_q ({\mathfrak g})$, take
its $q$-character and replace each $Y_{i,a}$ by the ratio of the
classes of prefundamental representations
$[L^+_{i,aq_i^{-1}}]/[L^+_{i,aq_i}]$ times the class of the
one-dimensional representation $[\omega_i]$ of $U_q ({\mathfrak b})$
on which the finite-dimensional Cartan subalgebra acts according to
the $i$th fundamental weight. Then this expression is equal to the
class of $V$ in the Grothendieck ring of ${\mathcal O}$ (viewed as a
representation of $U_q ({\mathfrak b})$ obtained by restriction from
$U_q ({\mathfrak g})$). This is our first main result.

For example, if ${\mathfrak g} = \wh{sl}_2$ and $V$ is the
two-dimensional representation, then we have
\begin{equation}    \label{rel1}
[V] = [\omega_1] \frac{[L^+_{1,aq^{-1}}]}{[L^+_{1,aq}]} +
[-\omega_1] \frac{[L^+_{1,aq^{3}}]}{[L^+_{1,aq}]},
\end{equation}
or equivalently,
$$
[V] [L^+_{1,aq}] = [\omega_1] [L^+_{1,aq^{-1}}] +
[{-\omega}_1] [L^+_{1,aq^{3}}],
$$
which follows from the fact that $V \otimes L^+_{1,aq}$ is an
extension of two representations: $[\omega_1] \otimes L^+_{1,aq^{-1}}$
and $[{-\omega}_1] \otimes L^+_{1,aq^{3}}$ (see \cite[Section
2]{jms}).

\medskip

Our second main result is that the (twisted) transfer-matrix
associated to $L^+_{i,a}$ is well-defined (despite the fact that
$L^+_{i,a}$ is infinite-dimensional), and further, all of its
eigenvalues on any irreducible finite-dimensional representation $W$
of $U_q ({\mathfrak g})$ are {\em polynomials} up to one and the same
factor that depends only on $W$ (more precisely, we prove this for the
prefundamental representations in the category dual to
$\mathcal{O}$). Denoting these eigenvalues by $Q_{i,a}$, and combining
our two results, we obtain the proof of the conjecture of Reshetikhin
and the first author.

For example, if ${\mathfrak g} = \wh{sl}_2$ and $V$ is the
two-dimensional representation, then \eqref{rel1} implies the Baxter
equation \eqref{relB} for the eigenvalues of the transfer-matrix of
$V$ (after renaming the variables $a \mapsto zq^{-1}$).

As explained in \cite[Section 6.3]{Fre} and in Section \ref{bae}
below, the formula for the eigenvalues of the transfer-matrices in
terms of the polynomials $Q_{i,a}$ leads to a natural system of
equations on the roots of these polynomials. These equations ensure
the cancellation of the apparent poles in the eigenvalues of the
transfer-matrices due to the appearance of the polynomials $Q_{i,a}$
in the denominator. These are the generalized Bethe Ansatz equations,
which are suitably modified equations (6.6) of \cite{Fre} (they are
modified because we use the twisted trace in the definition of the
transfer-matrices, which depends on additional parameters). We
conjecture that the solutions of these generalized Bethe Ansatz
equations are in one-to-one correspondence with the eigenvalues of the
(twisted) transfer-matrices.

If ${\mathfrak g} = \wh{sl}_2$ and $V$ is the two-dimensional
representation, these equations are precisely the Bethe Ansatz
equations of the six-vertex model. This observation was used by
N. Reshetikhin as a guiding principle for writing conjectural Bethe
Ansatz equations and the eigenvalues of the transfer-matrices for some
${\mathfrak g}$ \cite{R1,R2,R3} -- a procedure he dubbed ``analytic
Bethe Ansatz'' (see also \cite{BR,KS}). The results of \cite{Fre} and
the present paper give us a conceptual explanation of this procedure.

\medskip

We close this Introduction with the following three remarks.

\smallskip

(1) Our results show that the prefundamental representations have an
important role to play in representation theory of quantum affine
algebra. They are infinite-dimensional, but in many ways they have a
simpler structure and behavior than finite-dimensional
representations. And they can be used effectively to prove results
about finite-dimensional representations that were previously out of
reach, such as the conjecture on the spectra of transfer-matrices
that we prove in this paper. As another application, we use our
results on the polynomiality of the transfer matrix of $L^+_{i,a}$ to
show that a certain generating function of the Drinfeld's Cartan
elements $h_{i,n}$ is, up to a universal factor, a polynomial on any
finite-dimensional irreducible representation of $U_q ({\mathfrak
  g})$.



(2) As we learned from Nikita Nekrasov and Andrei Okounkov, Baxter's
polynomials $Q_{i,a}$ should have a geometric
interpretation. Finite-dimensional representations of $U_q ({\mathfrak
  g})$ may be realized in equivariant $K$-theory of quiver varieties
as shown by H. Nakajima \cite{Nak}, and similarly, finite-dimensional
representations of the Yangian are realized in equivariant cohomology
of these varieties \cite{Var} (see also \cite{mo}). In the Yangian
case, the Baxter polynomial is expected to be equal to the operator of
(quantum) multiplication by the Chern polynomial of a certain
tautological vector bundle on the quiver variety (see \cite{NS},
p.~15), and there is a similar conjecture in the case of $U_q
({\mathfrak g})$. It would be interesting to connect our results with
the geometry of quiver varieties. In particular, it is an interesting
question to find a geometric realization of the prefundamental
representations analogous to that of finite-dimensional
representations of $U_q ({\mathfrak g})$.

We remark that the category of finite-dimensional representations of $U_q
({\mathfrak g})$ is equivalent to that of the Yangian $Y_h(({\mathfrak
  g})$ (where $q=e^{\pi i h}$ and $h$ is not a rational number), as
shown by S. Gautam and V. Toledano Laredo \cite{GTL}.


(3) After this paper was finished, we learned from N. Nekrasov about
his joint work with V. Pestun and S. Shatashvili (subsequently
published as \cite{NPS}), in which the $q$-characters are used to
describe quantum geometry of the $\Omega$-deformations of 5D
supersymmetric quiver gauge theories (and similarly for 4D theories,
with quantum affine algebras replaced by Yangians). Note however that
in \cite{NPS} the analytic properties of the functions $Q_{i,a}$ and
$t_V(a)$ are quite different from ours. Thus, it seems that the
$q$-characters represent a rather general algebraic structure that can
be used (by imposing various analytic conditions on $Q_{i,a}$ and
$t_V(a)$) to describe not only the models of statistical mechanics
discussed in the present paper, but also other models of quantum
physics.

As explained in \cite{Fre}, the $q$-character map is a $q$-deformation
of the Miura transformation. And so the $q$-characters play a role
similar to that of the Miura transformation in the Gaudin model and
its generalizations: describing the spectra of the Hamiltonians of the
model \cite{FFR,F:icmp}. Moreover, the Miura transformation and the
$q$-character map arise naturally from the center of a completed
enveloping algebra of an affine Kac--Moody algebra, as explained in
\cite{FFR,F:icmp} (resp., quantum affine algebra, as explained in
\cite{Fre}) at the critical level. Thus, it is this center that
is the fundamental algebraic object governing a large class of
quantum integrable models.

\medskip

The paper is organized as follows. In Section \ref{bck} we recall the
definitions and the main properties of quantum affine (or loop)
algebras and the corresponding Borel subalgebras. In Section
\ref{catO} we recall important results about their representations; in
particular, finite-dimensional representation as well as those from
the category $\mathcal{O}$ (such as the prefundamental
representations), introduced in \cite{HJ}. In Section \ref{bacato} we
prove a uniform explicit $q$-character formula for positive
prefundamental representations (Theorem \ref{formuachar}).  We prove
that it implies our first main result: the realization of generalized
Baxter's relations in the Grothendieck ring of $\mathcal{O}$ (Theorem
\ref{relations}). We also prove that an arbitrary tensor product of
positive prefundamental representations is simple (Theorem
\ref{stensor}).  In Section \ref{trpol} we state our second main
result: polynomiality of the twisted transfer-matrices associated to
prefundamental representations (Theorem \ref{catOpol}). Our main
application is the proof of a deformed version of the conjecture of
Reshetikhin and the first author (Theorem \ref{fdcase}). We use this
result to write down the system of Bethe Ansatz equations explicitly
in Section \ref{bae}. We also derive the polynomiality of Drinfeld's
Cartan elements on finite-dimensional representations (Theorem
\ref{jordan}) and prove commutativity of the twisted transfer-matrices
associated to representations in the category $\mathcal{O}$ (Theorem
\ref{com}).  In Section \ref{gradation}, we establish the existence of
a certain grading on positive prefundamental representations (Theorem
\ref{defigrad}). This result is used in Section \ref{endproof} to
conclude the proof of Theorem \ref{catOpol}.

\medskip

{\bf Acknowledgments.} We are grateful to Nikita Nekrasov and Andrei
Okounkov for valuable discussions. We also thank Ilaria Damiani for
communications on root vectors.

This paper was completed while the first author was visiting
Universit\'e Paris-Diderot Paris 7, which he thanks for
hospitality. He also acknowledges support of the NSF.

The results of this paper were presented at the conference
``Symmetries in Mathematics and Physics II'' in honor of Victor Kac's
70th birthday at IMPA (Rio de Janeiro) in June 2013. It is a pleasure
to dedicate this paper to him.

\section{Quantum loop algebra and Borel algebras}\label{bck}

\subsection{Quantum loop algebra}\label{debut}

Let $C=(C_{i,j})_{0\leq i,j\leq n}$ be an indecomposable Cartan matrix
of untwisted affine type.  We denote by $\g$ the Kac--Moody Lie
algebra associated with $C$.  Set $I=\{1,\ldots, n\}$, and denote by
$\gb$ the finite-dimensional simple Lie algebra associated with the
Cartan matrix $(C_{i,j})_{i,j\in I}$.  Let $\{\alpha_i\}_{i\in I}$,
$\{\alpha_i^\vee\}_{i\in I}$, $\{\omega_i\}_{i\in I}$,
$\{\omega_i^\vee\}_{i\in I}$, $\dot{\mathfrak{h}}$ be the simple
roots, the simple coroots, the fundamental weights, the fundamental
coweights and the Cartan subalgebra of $\gb$, respectively.  We set
$Q=\oplus_{i\in I}\Z\alpha_i$, $Q^+=\oplus_{i\in I}\Z_{\ge0}\alpha_i$,
$P=\oplus_{i\in I}\Z\omega_i$.  Let $D=\mathrm{diag}(d_0\ldots, d_n)$
be the unique diagonal matrix such that $B=DC$ is symmetric and
$d_i$'s are relatively prime positive integers.  We denote by
$(~,~):Q\times Q\to\Z$ the invariant symmetric bilinear form such that
$(\alpha_i,\alpha_i)=2d_i$.  We use the numbering of the Dynkin
diagram as in \cite{ka}.  Let $a_0,\cdots,a_n$ stand for the Kac label
(\cite{ka}, pp.55-56). We have $a_0 = 1$ and we set $\alpha_0 =
-(a_1\alpha_1 + a_2\alpha_2 + \cdots + a_n\alpha_n)$.

Throughout this paper, we fix a non-zero complex number $q$ which is
not a root of unity.  We set $q_i=q^{d_i}$. We also set $h\in\CC$ such
that $q = e^h$, so that $q^r$ is well-defined for any $r\in\QQ$. We
will use the standard symbols for $q$-integers
\begin{align*}
[m]_z=\frac{z^m-z^{-m}}{z-z^{-1}}, \quad
[m]_z!=\prod_{j=1}^m[j]_z,
 \quad 
\qbin{s}{r}_z
=\frac{[s]_z!}{[r]_z![s-r]_z!}. 
\end{align*}
We will use the quantum Cartan matrix $C(q)= (C_{i,j}(q))_{i,j\in I}$
defined by $C_{i,j}(q) = [C_{i,j}]_q$ if $i\neq j$ in $I$ and
$C_{i,i}(q) = [2]_{q_i}$ for $i\in I$. The symmetrized quantum Cartan
matrix $B(q) = (B_{i,j}(q))_{i,j\in I}$ is defined by $B_{i,j}(q) =
[d_i]_qC_{i,j}(q)$ for $i,j\in I$. We denote by $\tilde{B}(q)$
(resp. $\tilde{C}(q)$) the inverse of $B(q)$ (resp. $C(q)$).

The quantum loop algebra $U_q(\g)$ is the $\C$-algebra defined by generators 
$e_i,\ f_i,\ k_i^{\pm1}$ ($0\le i\le n$) 
and the following relations for $0\le i,j\le n$.
\begin{align*}
&k_ik_j=k_jk_i,\quad k_0^{a_0}k_1^{a_1}\cdots k_n^{a_n}=1,\quad
&k_ie_jk_i^{-1}=q_i^{C_{i,j}}e_j,\quad k_if_jk_i^{-1}=q_i^{-C_{i,j}}f_j,
\\
&[e_i,f_j]
=\delta_{i,j}\frac{k_i-k_i^{-1}}{q_i-q_i^{-1}},
\\
&\sum_{r=0}^{1-C_{i.j}}(-1)^re_i^{(1-C_{i,j}-r)}e_j e_i^{(r)}=0\quad (i\neq j),
&\sum_{r=0}^{1-C_{i.j}}(-1)^rf_i^{(1-C_{i,j}-r)}f_j f_i^{(r)}=0\quad (i\neq j)\,.
\end{align*}
Here we have set $x_i^{(r)}=x_i^r/[r]_{q_i}!$ ($x_i=e_i,f_i$). 
The algebra $U_q(\g)$ has a Hopf algebra structure given by
\begin{align*}
&\Delta(e_i)=e_i\otimes 1+k_i\otimes e_i,\quad
\Delta(f_i)=f_i\otimes k_i^{-1}+1\otimes f_i,
\quad
\Delta(k_i)=k_i\otimes k_i\,,
\\
&
S(e_i) = -k_i^{-1} e_i ,\quad S(f_i) = -f_i k_i,
\quad
S(k_i)=k_i^{-1}\,,
\end{align*}
where $i=0,\cdots,n$. 

The algebra $U_q(\g)$ can also be presented in terms of the Drinfeld
generators \cite{Dri2,bec}
\begin{align*}
  x_{i,r}^{\pm}\ (i\in I, r\in\Z), \quad \phi_{i,\pm m}^\pm\ (i\in I,
  m\geq 0), \quad k_i^{\pm 1}\ (i\in I).
\end{align*}
\begin{example} In the case $\gb = sl_2$, we have $e_1 = x_{1,0}^+$, $e_0 =
  k_1^{-1} x_{1,1}^-$, $f_1 = x_{1,0}^-$ and $f_0 = x_{1,-1}^+ k_1$.
\end{example}
We shall use the generating series $(i\in I)$: 
$$\phi_i^\pm(z) = \sum_{m\geq 0}\phi_{i,\pm m}^\pm z^{\pm m} =
k_i^{\pm 1}\text{exp}\left(\pm (q_i - q_i^{-1})\sum_{m > 0} h_{i,\pm
    m} z^{\pm m} \right).$$
We also set $\phi_{i,\pm m}^\pm = 0$ for $m < 0$, $i\in I$.
\begin{rem} In \cite{Fre}, the notation $h_{i,m}$ is used for
  $[d_i]_qh_{i,m}$. \end{rem}

The algebra $U_q(\g)$ has a $\ZZ$-grading defined by $\on{deg}(e_i)
= \on{deg}(f_i) = \on{deg}(k_i^{\pm 1}) = 0$ for $i\in I$ and
$\on{deg}(e_0) = - \on{deg}(f_0) = 1$.  It satisfies
$\on{deg}(x_{i,m}^\pm) = \on{deg}(\phi_{i,m}^\pm) = m$ for $i\in
I$, $m\in\ZZ$. For $a\in\CC^*$, there is a corresponding automorphism
$\tau_a : U_q(\g)\rightarrow U_q(\g)$ such an element $g$ of degree
$m\in\ZZ$ satisfies $\tau_a(g) = a^m g$.

The algebra $U_q(\g)$ has a $Q$-grading defined by
$\on{deg}(x_{i,m}^\pm) = \pm \alpha_i$, $\on{deg}(\phi_{i,m}^\pm)
= 0$ for $i\in I$ and $m\in\ZZ$.

By Chari's result \cite[Proposition 1.6]{Cha2}, there is an involutive
automorphism $\hat{\omega} : U_q(\g)\rightarrow U_q(\g)$ defined by
($i\in I$, $m,r\in\Z$, $r\neq 0$)
$$\hat{\omega}(x_{i,m}^\pm) = - x_{i,-m}^\mp,\quad
\hat{\omega}(\phi_{i,\pm m}^\pm) = \phi_{i,\mp m}^{\mp},\quad
\hat{\omega}(h_{i,r}) = - h_{i,-r}.$$
Besides, it satisfies (see the proof of \cite[Proposition 1.6]{Cha2}):
\begin{equation}\label{formomega}\hat{\omega}(e_0) \in \CC^* f_0\text{
    and } \hat{\omega}(e_i) = -f_i\text{ for }i\in I).\end{equation}

Let $U_q(\g)^\pm$ (resp. $U_q(\g)^0$) be the subalgebra of $U_q(\g)$
generated by the $x_{i,r}^\pm$ where $i\in I, r\in\ZZ$ (resp. by the
$\phi_{i,\pm r}^\pm$ where $i\in I$, $r\geq 0$). We have a triangular
decomposition \cite{bec}
\begin{equation}\label{fdecomp}U_q(\g)\simeq U_q(\g)^-\otimes
  U_q(\g)^0 \otimes U_q(\g)^+.\end{equation}

\subsection{Borel algebra}\label{boralg}\label{subsec:Utg}

\begin{defi} The Borel algebra $U_q(\bo)$ is 
the subalgebra of $U_q(\g)$ generated by $e_i$ 
and $k_i^{\pm1}$ with $0\le i\le n$. 
\end{defi}
This is a Hopf subalgebra of $U_q(\g)$. 
The algebra $U_q(\mathfrak{b})$ 
contains 
the Drinfeld generators
$x_{i,m}^+$, $x_{i,r}^-$, $k_i^{\pm 1}$, $\phi_{i,r}^+$ 
where $i\in I$, $m \geq 0$ and $r > 0$. 

Let $U_q(\bo)^\pm = U_q(\g)^\pm\cap U_q(\bo)$ and
$U_q(\bo)^0 = U_q(\g)^0\cap U_q(\bo)$. 
Then we have 
\begin{align*}
U_q(\bo)^+ = \langle x_{i,m}^+\rangle_{i\in I, m\geq 0},\quad
U_q(\bo)^0 = 
\langle\phi_{i,r}^+,k_i^{\pm1}\rangle_{i\in I, r>0}.
\end{align*} 
It follows from \cite{bec} that we have a triangular decomposition
\begin{equation}\label{triandecomp}
U_q(\bo)\simeq U_q(\bo)^-\otimes 
U_q(\bo)^0 \otimes U_q(\bo)^+.
\end{equation}
Denote $\tb \subset U_q(\bo)$ the subalgebra generated by $\{k_i^{\pm 1}\}_{i\in I}$. 

\section{Representations of Borel algebras}\label{catO}

In this section we review results on representations of the Borel
algebra $U_q(\bo)$, in particular on the category $\mathcal{O}$
defined in \cite{HJ} and on finite-dimensional representations of
$U_q(\g)$.

\subsection{Highest $\ell$-weight modules}\label{lhwm}
Set $\tb^*=\bigl(\C^\times\bigr)^I$, and endow it with a group
structure by pointwise multiplication.  We define a group morphism
$\overline{\phantom{u}}:P \longrightarrow \tb^*$ by setting
$\overline{\omega_i}(j)=q_i^{\delta_{i,j}}$. We shall use the standard
partial ordering on $\tb^*$:
\begin{align}
\omega\leq \omega' \quad \text{if $\omega \omega'^{-1}$ 
is a product of $\{\ga_i^{-1}\}_{i\in I}$}.
\label{partial}
\end{align}

For a $U_q(\mathfrak{b})$-module $V$ and $\omega\in \tb^*$, we set
\begin{align}
V_{\omega}=\{v\in V \mid  k_i\, v = \omega(i) v\ (\forall i\in I)\}\,,
\label{wtsp}
\end{align}
and call it the weight space of weight $\omega$. 
For any $i\in I$, $r\in\ZZ$ we have $\phi_{i,r}^\pm (V_\omega)\subset V_\omega$
and $x_{i,r}^\pm (V_{\omega}) \subset V_{\omega \ga_i^{\pm 1}}$.
We say that $V$ is $\tb$-diagonalizable 
if $V=\underset{\omega\in \tb^*}{\bigoplus}V_{\omega}$.

\begin{defi} A series $\Psib=(\Psi_{i, m})_{i\in I, m\geq 0}$ 
of complex numbers such that 
$\Psi_{i,0}\neq 0$ for all $i\in I$ 
is called an $\ell$-weight. 
\end{defi}

We denote by $\tb^*_\ell$ the set of $\ell$-weights. 
Identifying $(\Psi_{i, m})_{m\geq 0}$ with its generating series we shall
write
\begin{align*}
\Psib = (\Psi_i(z))_{i\in I},
\quad
\Psi_i(z) = \underset{m\geq 0}{\sum} \Psi_{i,m} z^m.
\end{align*}
Since each $\Psi_i(z)$ is an invertible formal power series,
$\tb^*_\ell$ has a natural group structure. 
We have a surjective morphism of groups
$\varpi : \tb^*_\ell\rightarrow \tb^*$ given by 
$\varpi(\Psib)(i)=\Psi_{i,0}$.
For a $U_q(\mathfrak{b})$-module $V$ and $\Psib\in\tb_\ell^*$, 
the linear subspace
\begin{align}
V_{\Psibs} =
\{v\in V\mid
\exists p\geq 0, \forall i\in I, 
\forall m\geq 0,  
(\phi_{i,m}^+ - \Psi_{i,m})^pv = 0\}
\label{l-wtsp} 
\end{align}
is called the $\ell$-weight space of $V$ of $\ell$-weight $\Psib$. 

\begin{defi} A $U_q(\mathfrak{b})$-module $V$ is said to be 
of highest $\ell$-weight 
$\Psib\in \tb^*_\ell$ if there is $v\in V$ such that 
$V =U_q(\mathfrak{b})v$ 
and the following hold:
\begin{align*}
e_i\, v=0\quad (i\in I)\,,
\qquad 
\phi_{i,m}^+v=\Psi_{i, m}v\quad (i\in I,\ m\ge 0)\,.
\end{align*}
\end{defi}

The $\ell$-weight $\Psib\in \tb^*_\ell$ is uniquely determined by $V$. 
It is called the highest $\ell$-weight of $V$. 
The vector $v$ is said to be a highest $\ell$-weight vector of $V$.

\begin{prop}\label{simple} \cite{HJ}
For any $\Psib\in \tb^*_\ell$, there exists a simple 
highest $\ell$-weight module $L(\Psib)$ of highest $\ell$-weight
$\Psib$. This module is unique up to isomorphism.
\end{prop}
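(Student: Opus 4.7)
The plan is to construct $L(\Psib)$ as the unique simple quotient of a Verma-type module $M(\Psib)$, adapting the classical construction of irreducible highest weight modules to the setting of $\ell$-weights over the Borel algebra.

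First I would define a one-dimensional representation $\C v_{\Psib}$ of the subalgebra $U_q(\bo)^{\ge 0}:=U_q(\bo)^0\cdot U_q(\bo)^+$ by declaring $U_q(\bo)^+$ to act by zero, and $\phi_{i,m}^+$, $k_i^{\pm 1}$ to act by the scalars $\Psi_{i,m}$, $\Psi_{i,0}^{\pm 1}$ respectively; the element $k_0\in U_q(\bo)^0$ then automatically acts by $\prod_{i\in I}\Psi_{i,0}^{-a_i}$ in view of $k_0^{a_0}k_1^{a_1}\cdots k_n^{a_n}=1$ (using $a_0=1$). One must verify that these scalars are compatible with all relations of $U_q(\bo)^{\ge 0}$: the Cartan relations are trivially satisfied (as $U_q(\bo)^0$ is commutative in the Drinfeld realization); the Serre-type relations among $x_{i,m}^+$ hold because every term is zero; and the mixed relations involving $\phi_{i,m}^+$ or $k_i$ with $x_{j,r}^+$ collapse because the right-hand sides always contain a factor $x_{j,s}^+$. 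Then I would form the induced $U_q(\bo)$-module
\[
M(\Psib)=U_q(\bo)\otimes_{U_q(\bo)^{\ge 0}}\C v_{\Psib},
\]
which, by the triangular decomposition \eqref{triandecomp}, is isomorphic as a vector space to $U_q(\bo)^-\otimes\C v_{\Psib}$.

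The $Q$-grading on $U_q(\bo)^-$ (where $x_{i,r}^-$ has weight $-\alpha_i$) induces a $\tb$-weight grading on $M(\Psib)$, with one-dimensional top weight space $M(\Psib)_{\varpi(\Psib)}=\C v_{\Psib}$ and all other weights strictly lower in the order \eqref{partial}. By construction $v_{\Psib}$ is annihilated by every $x_{i,m}^+$ (and in particular by $e_i$ for $i\in I$) and satisfies $\phi_{i,m}^+ v_{\Psib}=\Psi_{i,m}v_{\Psib}$, so $M(\Psib)$ is of highest $\ell$-weight $\Psib$. Any nonzero $U_q(\bo)$-submodule of $M(\Psib)$ is $\tb$-graded, so if it meets the one-dimensional top weight line it must contain $v_{\Psib}$ and equal $M(\Psib)$. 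Consequently every proper submodule avoids $\C v_{\Psib}$, the sum $N_{\max}$ of all proper submodules is still proper, and $L(\Psib):=M(\Psib)/N_{\max}$ is a simple $U_q(\bo)$-module of highest $\ell$-weight $\Psib$. For uniqueness, if $L$ is any simple module of highest $\ell$-weight $\Psib$ with highest $\ell$-weight vector $v$, then sending $v_{\Psib}\mapsto v$ extends to a $U_q(\bo)$-equivariant surjection $M(\Psib)\twoheadrightarrow L$; its kernel is a maximal submodule, forced to coincide with $N_{\max}$, so $L\cong L(\Psib)$.

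The main technical obstacle lies in setting up the first step carefully: the Drinfeld generators $\phi_{i,m}^+$ and $x_{i,m}^+$ for $m>0$ are nontrivial words in the Chevalley generators of $U_q(\bo)$, and verifying that the assigned scalars are consistent with every polynomial identity among these words requires a good command of Beck's \cite{bec} identification of the Drinfeld and Chevalley presentations. Once this is granted, together with the triangular decomposition \eqref{triandecomp} (which ensures $M(\Psib)\ne 0$ and gives it the expected vector space structure), the remaining steps are standard arguments in the theory of highest weight modules, and uniqueness reduces to the one-dimensionality of the top weight space.
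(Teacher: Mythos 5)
The paper does not reproduce the argument for this statement; it simply cites \cite{HJ}. Your Verma-module construction --- inducing from a one-dimensional $U_q(\bo)^{\ge 0}$-module determined by $\Psib$, using the triangular decomposition \eqref{triandecomp} to identify $M(\Psib)\simeq U_q(\bo)^-\otimes\C v_{\Psib}$ with a one-dimensional top weight line, and passing to the unique simple quotient --- is precisely the approach taken in \cite{HJ}, and your argument (including the uniqueness via the universal property of $M(\Psib)$) is correct.
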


The submodule of $L(\Psib)\otimes L(\Psib')$ generated by the tensor
product of the highest $\ell$-weight vectors is of highest
$\ell$-weight $\Psib\Psib'$. In particular, $L(\Psib\Psib')$ is a
subquotient of $L(\Psib)\otimes L(\Psib')$.

\begin{defi}\cite{HJ}
For $i\in I$ and $a\in\CC^\times$, let 
\begin{align}
L_{i,a}^\pm = L(\Psib_{i,a})
\quad \text{where}\quad 
(\Psib_{i,a})_j(z) = \begin{cases}
(1 - za)^{\pm 1} & (j=i)\,,\\
1 & (j\neq i)\,.\\
\end{cases} 
\label{fund-rep}
\end{align}
\end{defi}
We call $L_{i,a}^+$ (resp. $L_{i,a}^-$) a positive (resp. negative)
prefundamental representation in the category $\mathcal{O}$.

\begin{example} In the case $\gb = sl_2$, $L_{1,a}^+$ carries a basis
  $L_{1,a}^+=\oplus_{j\geq 0} \CC v_j$ with the explicit action ($r,j
  \geq 0$, $p > 0$, $v_{-1} = 0$):
\begin{align*}
&x_{1,r}^+v_j = \delta_{r,0} v_{j-1}\,,\quad 
x_{1,p}^-v_j = \frac{- aq^{-j}\delta_{p,1}[j+1]_q}{q-q^{-1}} v_{j+1}\,,\quad
\phi_1^+(z)v_j = q^{-2j}(1-za)v_j.
\end{align*}
\end{example}

\begin{defi}\label{oned}\cite{HJ}
For $\omega\in \tb^*$, let 
$$[\omega] = L(\Psib_\omega)
\quad \text{where}\quad 
(\Psib_\omega)_i(z) = \omega(i) \quad (i\in I).$$
\end{defi}
Note that the representation $[\omega]$ is $1$-dimensional with a
trivial action of $e_0,\cdots, e_n$.  It is class a zero
prefundamental representation.  For $\lambda\in P$, we will simply use
the notation $[\lambda]$ for the representation
$[\overline{\lambda}]$.

For $a\in\CC^\times$, the subalgebra $U_q(\mathfrak{b})$ is stable
under $\tau_a$.  Denote its restriction to $U_q(\mathfrak{b})$ by the
same letter.  Then the pullbacks of the $U_q(\mathfrak{b})$-modules
$L_{i,b}^\pm$ by $\tau_a$ is $L_{i,ab}^\pm$.

\subsection{Category $\mathcal{O}$}\label{precato}

For $\lambda\in \tb^*$, we set $D(\lambda )=
\{\omega\in \tb^* \mid \omega\leq\lambda\}$.

\begin{defi}\cite{HJ} A $U_q(\mathfrak{b})$-module $V$ 
is said to be in category $\mathcal{O}$ if:

i) $V$ is $\tb$-diagonalizable,

ii) for all $\omega\in \tb^*$ we have 
$\dim (V_{\omega})<\infty$,

iii) there exist a finite number of elements 
$\lambda_1,\cdots,\lambda_s\in \tb^*$ 
such that the weights of $V$ are in 
$\underset{j=1,\cdots, s}{\bigcup}D(\lambda_j)$.
\end{defi}

The category $\mathcal{O}$ is a monoidal category. 

\newcommand{\mfr}{\mathfrak{r}}
Let 
$\mfr$
be the subgroup of $\tb^*_\ell$
consisting of $\Psib$ such that 
$\Psi_i(z)$ is rational for any $i\in I$.

\begin{thm}\label{class}\cite{HJ} Let $\Psib\in\tb^*_\ell$. 
The simple module $L(\Psib)$ is in category 
$\mathcal{O}$ if and only if $\Psib\in \mfr$. Then it is a subquotient
of a tensor product of (positive, negative, zero) prefundamental representations.
Moreover, 
for $V$ in category $\mathcal{O}$, $V_{\Psib}\neq 0$ implies $\Psib\in\mfr$.
\end{thm}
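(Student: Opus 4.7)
The theorem bundles three claims: the equivalence $L(\Psib)\in\mathcal{O}\Leftrightarrow \Psib\in\mfr$, the subquotient description, and the rationality assertion for any $V\in\mathcal{O}$. My plan is to obtain the ``if'' direction together with the subquotient claim by an explicit decomposition, and then to derive the ``only if'' direction as the special case $V=L(\Psib)$ of the rationality assertion, which is the main obstacle.

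For the ``if'' direction and the subquotient claim, suppose $\Psib\in\mfr$. Since each $\Psi_i(z)$ is rational with $\Psi_i(0)=\Psi_{i,0}\neq 0$, factor
\begin{align*}
\Psi_i(z)=\Psi_{i,0}\,\prod_k(1-za_{i,k})\cdot\prod_l(1-zb_{i,l})^{-1}
\end{align*}
for some $a_{i,k},b_{i,l}\in\CC^\times$, and define $\omega\in\tb^*$ by $\omega(i)=\Psi_{i,0}$. In the group $\tb^*_\ell$ this rewrites as
\begin{align*}
\Psib=\Psib_\omega\cdot\prod_{i,k}\Psib_{i,a_{i,k}}\cdot\prod_{i,l}\Psib_{i,b_{i,l}}^{-1},
\end{align*}
where $\Psib_{i,a}$ denotes the positive prefundamental $\ell$-weight and $\Psib_{i,b}^{-1}$ coincides with the highest $\ell$-weight of $L_{i,b}^-$. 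Iterating the subquotient property of tensor products of highest $\ell$-weight modules noted after Proposition~\ref{simple} (using right-exactness of $\otimes_{\C}$ to propagate subquotients across multiple factors), $L(\Psib)$ is realized as a subquotient of
\begin{align*}
[\omega]\otimes\bigotimes_{i,k}L_{i,a_{i,k}}^+\otimes\bigotimes_{i,l}L_{i,b_{i,l}}^-,
\end{align*}
which establishes the subquotient description. To deduce $L(\Psib)\in\mathcal{O}$, it then remains to verify two routine facts: that $\mathcal{O}$ is stable under tensor product and subquotient (from the weight-space definition, using that the bounded-above weight condition is preserved under multiplication and that each $(V\otimes W)_\omega$ decomposes as a finite sum of products of finite-dimensional weight spaces), and that each prefundamental and one-dimensional module lies in $\mathcal{O}$ (immediate for $[\omega]$; for $L_{i,a}^\pm$ by inspecting the explicit action on a basis generalizing the $\wh{sl}_2$-example in the text).

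For the rationality assertion, let $V\in\mathcal{O}$ with $V_\Psib\neq 0$. Then $V_\Psib\subset V_\omega$ with $\omega=\varpi(\Psib)$, and $V_\omega$ is finite-dimensional by hypothesis, so the commuting operators $\phi_{i,m}^+$ restrict to a commuting family on $V_\omega$. The goal is to show the generalized-eigenvalue series $\Psi_i(z)=\sum_m\Psi_{i,m}z^m$ is rational for each $i$. The main obstacle is precisely this step: finite-dimensionality of $\on{End}(V_\omega)$ yields only a linear dependence among the operators $\phi_{i,m}^+$ and not automatically a linear recursion on the scalar sequence $\{\Psi_{i,m}\}$. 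My strategy is first to reduce to a highest $\ell$-weight submodule: because the weights of $V$ are bounded above, there is a maximal weight $\lambda'$ occurring in $V$, and its weight space contains a $U_q(\bo)^+$-annihilated vector; this space is moreover $U_q(\bo)^0$-stable (for $u\in V_{\lambda'}$ with $e_ju=0$, one has $e_j\phi_{i,m}^+u=[e_j,\phi_{i,m}^+]u\in V_{\lambda'\ga_j}=0$ by maximality of $\lambda'$), so a highest $\ell$-weight vector $u$ of some $\ell$-weight $\Psib'$ exists. I would then exploit the Drinfeld commutation relation
\begin{align*}
(z-q_i^{B_{ii}}w)\,\phi_i^+(z)\,x_i^-(w)=(q_i^{B_{ii}}z-w)\,x_i^-(w)\,\phi_i^+(z)
\end{align*}
to express the action of $\phi_i^+(z)$ on iterated vectors $x_{i,r_1}^-\cdots x_{i,r_k}^- u$ as a rational transform of $\Psi'_i(z)$; finite-dimensionality of $V_{\lambda'\ga_i^{-k}}$ for large $k$ then forces enough linear dependences to produce a polynomial functional equation on $\Psi'_i(z)$ equivalent to rationality. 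Propagating through the $U_q(\bo)^-$-action that connects these highest-$\ell$-weight pieces to $V_\Psib$ yields $\Psib\in\mfr$ in general; applied to $V=L(\Psib)$ this also gives the ``only if'' direction.
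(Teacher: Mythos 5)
A preliminary remark: the paper does not prove this statement at all --- it is quoted from \cite{HJ} --- so there is no internal proof to match; your outline does, however, follow the same overall route as \cite{HJ} (factor each rational $\Psi_i(z)$ into $(1-za)^{\pm 1}$ factors and a constant, realize $L(\Psib)$ as a subquotient of $[\omega]\otimes\bigotimes L^+_{i,a_{i,k}}\otimes\bigotimes L^-_{i,b_{i,l}}$, and use closure of $\mathcal{O}$ under tensor products and subquotients). The genuine gap is the step you dismiss as routine: that the prefundamental modules $L^{\pm}_{i,a}$ themselves lie in $\mathcal{O}$, i.e.\ have finite-dimensional weight spaces. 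For general $\gb$ and general $i$ there is no explicit basis with explicit action ``generalizing the $\wh{sl}_2$-example''; these are simple quotients of induced modules whose weight spaces are infinite-dimensional (already the weight space just below the top of the induced module is spanned by the infinitely many vectors $x^-_{i,r}v$, $r>0$), and finite-dimensionality of the weight spaces of the simple quotient is precisely the nontrivial content of the theorem in this special case. In \cite{HJ} this occupies the heart of the paper: the asymptotic construction of $L^-_{i,a}$ as limits of Kirillov--Reshetikhin modules and the character results (the present paper invokes exactly these as \cite[Theorems 6.1, 6.4]{HJ}). As written, your ``if'' direction is therefore circular: it assumes the hard part.

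For the rationality claim (iii) there are two further problems. First, your reduction produces a highest $\ell$-weight vector only at a maximal weight, while the statement concerns every $\ell$-weight of every $V$ in $\mathcal{O}$; the closing phrase ``propagating through the $U_q(\bo)^-$-action'' is not an argument, because you have not shown that $\ell$-weights below the top differ from highest $\ell$-weights by rational factors --- that is itself a theorem, and a workable substitute would be an induction on the depth of the weight $\mu$ proving that the $\mathrm{End}(V_\mu)$-valued series $\phi_i^+(z)|_{V_\mu}$ is rational. Second, even at the top the mechanism is underspecified: finite-dimensionality of $V_{\lambda'\ga_i^{-1}}$ only gives, for each $m$, \emph{some} dependence of $x^-_{i,m}u$ on earlier vectors, with coefficients that may vary with $m$, which does not imply rationality of $\Psi'_i(z)$. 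What is needed is a constant-coefficient linear recursion; one gets it by noting that $x^-_{i,m+1}u$ is obtained from $x^-_{i,m}u$ by applying one fixed operator (a nonzero multiple of $h_{i,1}-\lambda_{i,1}$, using $[h_{i,1},x^-_{i,m}]\in\CC^* x^-_{i,m+1}$) on the finite-dimensional space $V_{\lambda'\ga_i^{-1}}$, so Cayley--Hamilton yields the recursion, and then $[x^+_{i,0},x^-_{i,m}]u=\phi^+_{i,m}u/(q_i-q_i^{-1})$ ($m>0$) converts it into a recursion for the scalars $\Psi'_{i,m}$, hence rationality. With this mechanism made explicit (it does give the ``only if'' direction when applied to $V=L(\Psib)$), and with the membership of the prefundamental representations in $\mathcal{O}$ taken as input from \cite{HJ} rather than claimed as an inspection, the remainder of your outline is sound.
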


Let 
$\mathcal{E}_\ell\subset \Z^{\mfr}$
be the ring of maps
$c : \mfr\rightarrow \ZZ$  
satisfying $c(\Psib) = 0$ for all 
$\Psib$ such that $\varpi(\Psib)$ is outside a finite union
of sets of the form $D(\mu)$ and such that for 
each $\omega\in \tb^*$, there are finitely many $\Psib$
such that $\varpi(\Psib) = \omega$ and $c(\Psib)\neq 0$.
Similarly, let $\mathcal{E}\subset \Z^{\tb^*}$ 
be the ring of maps $c : \tb^* \rightarrow \ZZ$ 
satisfying 
$c(\omega) = 0$ for all $\omega$ outside 
a finite union of sets of the form $D(\mu)$. 
The map $\varpi$ is naturally extended to a surjective ring morphism 
$\varpi : \mathcal{E}_\ell\rightarrow \mathcal{E}$. 

For $\Psib\in\mfr$  
(resp. $\omega\in\tb^*$), we define 
$[\Psib] = \delta_{\Psibs,.}\in\mathcal{E}_\ell$ 
(resp. $[\omega] = \delta_{\omega,.}\in\mathcal{E}$).

Let $V$ be a $U_q(\mathfrak{b})$-module in category $\mathcal{O}$. 
We define \cite{Fre, HJ} the $q$-character of $V$ 
\begin{align}
\chi_q(V) = 
\sum_{\Psibs\in\mfr}  
\mathrm{dim}(V_{\Psibs}) [\Psib]\in \mathcal{E}_\ell\,.
\label{qch}
\end{align}

\begin{example}\label{onedim} For $\omega\in\tb^*$, the $q$-character
  of the $1$-dimensional representation
  $[\omega]$ is just its $\ell$-highest weight $\chi_q([\omega]) =
  [\omega]$. That is why the use of the same notation $[\omega]$ will
  not lead to confusion.
\end{example}

Similarly we define the ordinary character of $V$ to be
an element of $\mathcal{E}$
\begin{align}
\chi(V) = \varpi(\chi_q(V)) =  \sum_{\omega\in\tb^*} 
\text{dim}(V_\omega) [\omega]\,.
\label{ch}
\end{align}
For $V$ in category $\mathcal{O}$ which has a unique $\ell$-weight $\Psib$
whose weight is maximal,
we also consider its normalized $q$-character $\tilde{\chi}_q(V)$
and normalized character $\tilde{\chi}(V)$ by
\begin{align*}
\tilde{\chi}_q(V) = [\Psib^{-1}]\cdot\chi_q(V)\,,
\quad 
\tilde{\chi}(V) = \varpi(\tilde{\chi}_q(V))\,.
\end{align*}
Let $\text{Rep}(U_q(\mathfrak{b}))$ be the Grothendieck ring of the
category $\mathcal{O}$.  
\begin{prop} The $q$-character morphism 
$$
\chi_q : \text{Rep}(U_q(\mathfrak{b}))\rightarrow
\mathcal{E}_\ell,\quad [V]\mapsto \chi_q(V),
$$
is an injective ring morphism.
\end{prop}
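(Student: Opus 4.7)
The plan is to establish three things in turn: well-definedness of $\chi_q$ on the Grothendieck group (additivity on short exact sequences), the ring homomorphism property (multiplicativity under tensor products), and injectivity.

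For the first step, given a short exact sequence $0\to V'\to V\to V''\to 0$ in $\mathcal{O}$, the commuting operators $\phi_{i,m}^+$ preserve $V'$ and descend to $V''$. On each (finite-dimensional) weight space, dimensions of generalized joint eigenspaces for a commuting family of operators are additive through subquotients, so $\dim V_{\Psibs}=\dim V'_{\Psibs}+\dim V''_{\Psibs}$ for every $\Psib\in\mfr$. Hence $\chi_q$ descends to a well-defined homomorphism of abelian groups on $\mathrm{Rep}(U_q(\mathfrak{b}))$.

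For multiplicativity, I would use the structural fact that, modulo elements of $U_q(\mathfrak{b})^-U_q(\mathfrak{b})^0\otimes U_q(\mathfrak{b})^+U_q(\mathfrak{b})^0$, one has $\Delta(\phi_i^+(z))\equiv\phi_i^+(z)\otimes\phi_i^+(z)$, a consequence of the shape of the universal $R$-matrix together with the relation between the Chevalley and Drinfeld presentations. Fix $V,V'\in\mathcal{O}$ and $\omega\in\tb^*$. The decomposition $(V\otimes V')_\omega=\bigoplus_{\omega_1\omega_2=\omega}V_{\omega_1}\otimes V'_{\omega_2}$ is a \emph{finite} direct sum since the admissible $\omega_1$ lie in the finitely many cones $D(\lambda_j)$ bounding the weights of $V$. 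Ordering the summands by any linear refinement of the partial order on $\omega_1$, each operator $\phi_{i,m}^+$ acquires block upper-triangular form on $(V\otimes V')_\omega$ with diagonal blocks $\sum_{k+\ell=m}\phi_{i,k}^+|_{V_{\omega_1}}\otimes \phi_{i,\ell}^+|_{V'_{\omega_2}}$. Consequently the joint generalized spectrum of $\phi_i^+(z)$ on $(V\otimes V')_\omega$ is obtained termwise by products of spectra on the factors $V_{\omega_1}$ and $V'_{\omega_2}$, which is exactly the identity $\chi_q(V\otimes V')=\chi_q(V)\chi_q(V')$.

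For injectivity I would run the standard highest-$\ell$-weight argument. Let $x=\sum_i c_i\,[L(\Psib^{(i)})]\in\ker\chi_q$ with distinct $\Psib^{(i)}\in\mfr$. The support conditions built into $\mathcal{E}_\ell$ (together with Theorem \ref{class}) confine the weights $\varpi(\Psib^{(i)})$ with $c_i\ne 0$ to a finite union of cones $D(\mu)$, so they admit maximal elements. Choose $i_0$ maximizing $\varpi(\Psib^{(i_0)})$ among indices with $c_i\ne 0$. Every $\ell$-weight of $L(\Psib^{(i)})$ has weight $\le\varpi(\Psib^{(i)})$, and the top weight space of $L(\Psib^{(i)})$ is one-dimensional of $\ell$-weight $\Psib^{(i)}$. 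So the multiplicity of $[\Psib^{(i_0)}]$ in $\chi_q(L(\Psib^{(i)}))$ equals $\delta_{i,i_0}$: it vanishes when $\varpi(\Psib^{(i_0)})\not\le\varpi(\Psib^{(i)})$ by weight considerations, and when $\varpi(\Psib^{(i_0)})=\varpi(\Psib^{(i)})$ but $i\ne i_0$ the one-dimensional top weight space carries $\ell$-weight $\Psib^{(i)}\ne\Psib^{(i_0)}$. Reading off the coefficient of $[\Psib^{(i_0)}]$ in $\chi_q(x)=0$ forces $c_{i_0}=0$, contradicting the choice of $i_0$.

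The chief obstacle is the multiplicativity step: one must verify the upper-triangular form of $\Delta(\phi_{i,m}^+)$ with respect to the triangular decomposition (\ref{triandecomp}), a statement that, while classical, ultimately rests on explicit computations with the universal $R$-matrix and the compatibility of the Chevalley and Drinfeld coproducts. Once that input is granted, the weight-space argument is routine finite-dimensional linear algebra thanks to the finite-dimensionality axiom (ii) of category $\mathcal{O}$.
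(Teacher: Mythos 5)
Your proof is correct and reproduces the standard argument; note that the paper itself states this proposition without proof, as it is recalled from \cite{HJ} (with the finite-dimensional case going back to \cite{Fre}), so there is no independent ``paper proof'' to compare against. Your three-step plan --- additivity of generalized joint eigenspace dimensions across short exact sequences, multiplicativity via the fact that $\Delta(\phi_i^+(z))$ is $\phi_i^+(z)\otimes\phi_i^+(z)$ up to terms lowering the weight of the first factor (hence block-triangular on each finite-dimensional weight space of $V\otimes V'$), and injectivity via a highest-$\ell$-weight extraction --- is exactly how this result is established in the references. Two small points of precision: in the injectivity step you should say ``maximal'' rather than ``maximizing'' since the ordering on $\tb^*$ is only partial (your argument does only use maximality, so this is cosmetic); and in the multiplicativity step, as you yourself flag, one must cite the precise triangularity statement for $\Delta(\phi_i^+(z))$ from the literature (e.g.\ \cite[Lemma 1]{Fre}), as it is not formal.
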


\subsection{Finite-dimensional representations}\label{fdrep}

Let $\mathcal{C}$ be the category of (type $1$) finite-dimen\-sional
representations of $U_q(\Glie)$.

For $i\in I$, let $P_i(z)\in\mathbb{C}[z]$ be a polynomial with constant 
term $1$. Set
\begin{align*}
\Psib = (\Psi_i(z))_{i\in I},
\quad
\Psi_i(z) = 
q_i^{\on{deg}(P_i)}\frac{P_i(zq_i^{-1})}{P_i(zq_i)}.
\end{align*}
Then $L(\Psib)$ is finite-dimensional.  Moreover the action of
$U_q(\mathfrak{b})$ can be uniquely extended to an action of the full
quantum affine algebra $U_q(\Glie)$, and any simple object in the
category $\mathcal{C}$ is of this form.

\begin{rem} Let $L(\Psib')$ be any finite-dimensional module in the
  category $\mathcal{O}$. We claim that there is $\Psib$ as above and
  $\omega\in\tb^*$ such that $L(\Psib') \simeq L(\Psib)\otimes
  [\omega]$. Since $[\omega]$ is just a one-dimensional
  representation, this means that, up to a slight twisting of the
  action of the Cartan elements, $L(\Psib')$ is a representation of
  $U_q(\Glie)$.  This statement is known in the case $\gb = sl_2$
  \cite{bt}. To prove it in general, it suffices to prove that $\Psib'
  = \Psib \Psib_{\omega}$. This is clear by $sl_2$-reduction as, for
  each $i\in I$, the subalgebra of $U_q(\bo)$ generated by the
  $k_i^{\pm 1}$, $x_{i,m}^+$, $x_{i,m+1}^-$, $\phi_{i,m}^+$, $m \geq
  0$ is isomorphic to the Borel algebra of $U_{q_i}(\widehat{sl}_2)$.
\end{rem}

Following \cite{Fre}, consider the ring of Laurent polynomials $\Yim =
\ZZ[Y_{i,a}^{\pm 1}]_{i\in I,a\in\CC^*}$ in the indeterminates
$\{Y_{i,a}\}_{i\in I, a\in \C^*}$.  Let $\mathcal{M}$ be the group of
monomials of $\Yim$. For example, for $i\in I, a\in\CC^*$, define
$A_{i,a}\in\mathcal{M}$ to be
$$
Y_{i,aq_i^{-1}}Y_{i,aq_i}
\Bigl(\prod_{\{j\in I|C_{j,i} = -1\}}Y_{j,a}
\prod_{\{j\in I|C_{j,i} = -2\}}Y_{j,aq^{-1}}Y_{j,aq}
\prod_{\{j\in I|C_{j,i} =
-3\}}Y_{j,aq^{-2}}Y_{j,a}Y_{j,aq^2}\Bigr)^{-1}\,.
$$
For a monomial 
$m = \prod_{i\in I, a\in\CC^*}Y_{i,a}^{u_{i,a}}$, 
we consider its `evaluation on $\phi^+(z)$'. 
By definition it is an element 
$m(\phi(z))\in\mfr$  
given by
$$m\bigl(\phi(z))=
\prod_{i\in I, a\in\CC^*}
\left(Y_{i,a}(\phi(z))\right)^{u_{i,a}}\text{ where }
\Bigl(Y_{i,a}\bigl(\phi(z)\bigr)\Bigr)_j
=\begin{cases}
\displaystyle{q_i\frac{1-a q_i^{-1}z}{1-aq_iz}}& (j=i),\\
1 & (j\neq i).\\
\end{cases}$$
This defines an injective group morphism 
$\mathcal{M}\rightarrow \mfr$. 
We identify a monomial $m\in\mathcal{M}$ with its image in 
$\mfr$. 
Note that $\varpi(Y_{i,a}) = \overline{\omega_i}$.

It is proved in \cite{Fre} that a finite-dimensional
$U_q(\Glie)$-module $V$ satisfies 
$V = \bigoplus_{m\in\mathcal{M}} V_{m\left(\phi(z)\right)}$.
In particular, $\chi_q(V)$ can be viewed  
as an element of $\Yim$.

A monomial $M\in\mathcal{M}$ is said to be dominant if
$M\in\ZZ[Y_{i,a}]_{i\in I, a\in\CC^*}$.  For $L(\Psib)$ a
finite-dimensional simple $U_q(\Glie)$-module, $\Psib =
M\bigl(\phi(z)\bigr)$ holds for some dominant monomial
$M\in\mathcal{M}$. This representation will be denoted by $L(M)$.

For example, for $i\in I$, $a\in\CC^*$ and $k\geq 0$, we have the
Kirillov-Reshetikhin (KR) module
\begin{align}
W_{k,a}^{(i)} = L(Y_{i,a}Y_{i,aq_i^2}\cdots Y_{i,aq_i^{2(k-1)}})\,.
\label{KRmod}
\end{align}
The representations $W_{1,a}^{(i)} = L(Y_{i,a})$ are called
fundamental representations.
\begin{example}\label{exkr} In the case $\gb = sl_2$, we have ($k\geq
  0$, $a\in\CC^*$) \cite{Fre}:
$$\chi_q(W_{k,aq^{1-2k}}^{(1)}) = Y_{aq^{-1}}Y_{aq^{-3}}\cdots Y_{aq^{-2k+1}}
(1 + A_{1,a}^{-1} + A_{1,a}^{-1}A_{1,aq^{-2}}^{-1} 
+ \cdots + A_{1,a}^{-1} \cdots A_{1,aq^{-2(k - 1)}}^{-1})\,,
$$
and $W_{k,aq^{1-2k}}^{(1)}$ carries a basis $(w_0,\cdots,w_k)$ 
with the explicit action ($r\in\ZZ$, $0\leq j\leq k$, $w_{-1} = w_{k+1} = 0$): 
\begin{align*}
&x_{1,r}^+w_j=a^rq^{2r(-j+1)} w_{j-1}\,,
\quad 
x_{1,r}^-w_j=a^rq^{-2rj}[j+1]_q[k - j]_qw_{j+1}\,,
\\
&\phi^{\pm}_1(z)w_j=q^{k-2j}
\frac{(1-q^{-2k}za)(1-q^{2}za)}{(1-q^{-2j+2}za)(1-q^{-2j}za)}w_j\,.
\end{align*}
\end{example}

For $m$ a dominant monomial, we will denote $\tilde{L}(m) = L(m
(\varpi(m))^{-1})$.

\subsection{The dual category $\mathcal{O}^*$}\label{dualcat}

For $V$ a $\tb$-diagonalizable $U_q(\bo)$-module, 
we define a structure of $U_q(\bo)$-module on its
graded dual $V^* = \oplus_{\beta\in \tb^*} V_\beta^*$ by
\begin{align*}
(x\,u)(v)=u\bigl(S^{-1}(x)v\bigr)\quad
(u\in  V^*, \ v\in V,\ x \in U_q(\mathfrak{b})).
\end{align*}

\begin{defi} Let $\mathcal{O}^*$ be the category of
  $\tb$-diagonalizable $U_q(\bo)$-modules $V$ such that $V^*$ is in
  category $\mathcal{O}$.
\end{defi}

A $U_q(\mathfrak{b})$-module $V$ is said to be of lowest $\ell$-weight 
$\Psib\in \tb^*_\ell$ if there is $v\in V$ such that $V =U_q(\mathfrak{b})v$ 
and the following hold:
\begin{align*}
U_q(\bo)^- v = \CC v\,,
\qquad 
\phi_{i,m}^+v=\Psi_{i, m}v\quad (i\in I,\ m\ge 0)\,.
\end{align*}
For $\Psib\in\tb^*_\ell$, we have the simple $U_q(\bo)$-module
$L'(\Psib)$ of lowest $\ell$-weight $\Psi$.  We have the notion of
characters and $q$-characters for category $\mathcal{O}^*$ as in
Section \ref{precato}.

\begin{prop}\label{dualweight}\cite{HJ} For $\Psib\in \tb^*_\ell$ we
  have $(L'(\Psib))^* \simeq L(\Psib^{-1})$.
\end{prop}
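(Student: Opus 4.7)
The plan is to exhibit a highest $\ell$-weight vector in $L'(\Psib)^*$ and identify its $\ell$-weight as $\Psib^{-1}$. Let $v \in L'(\Psib)_\omega$ be the canonical lowest $\ell$-weight generator, where $\omega = \varpi(\Psib)$. The triangular decomposition (\ref{triandecomp}) together with $U_q(\bo)^- v = \CC v$ forces $V := L'(\Psib) = U_q(\bo)^+ v$, so all weights of $V$ lie in $\omega \cdot \overline{Q^+}$; in particular $\omega$ is the minimum and $V_\omega = \CC v$. The dual functional $v^* \in V^*_{\omega^{-1}}$ then spans the one-dimensional weight space at the maximum weight $\omega^{-1}$ of $V^*$.

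First I would verify that $v^*$ is a highest $\ell$-weight vector. For any generator $x_{i,m}^+$ of $U_q(\bo)^+$ with $i \in I$, $m \geq 0$, the element $x_{i,m}^+ v^*$ lies in $V^*_{\omega^{-1} \overline{\alpha_i}}$, which vanishes by maximality; hence $U_q(\bo)^+$ annihilates $v^*$. Because $V^*_{\omega^{-1}}$ is one-dimensional, each $\phi_{i,m}^+$ acts on $v^*$ by a scalar $\Psi'_{i,m}$, defining an $\ell$-weight $\Psib'$. Simplicity of $V^*$ follows from simplicity of $L'(\Psib)$ via the exact contravariant duality between $\mathcal{O}^*$ and $\mathcal{O}$ (an involution since $(V^*)^* \simeq V$ for modules with finite-dimensional weight spaces): if $W \subset V^*$ is a submodule, its annihilator $W^\perp \subset V$ is a submodule, hence $W^\perp \in \{0, V\}$ and $W \in \{0, V^*\}$. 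By Proposition \ref{simple}, $V^* \simeq L(\Psib')$.

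The main computation is the identification $\Psib' = \Psib^{-1}$. Pairing with $v$ gives
\[
\Psi'_{i,m} = (\phi_{i,m}^+ v^*)(v) = v^*\bigl(S^{-1}(\phi_{i,m}^+) v\bigr).
\]
Since $S^{-1}(\phi_i^+(z))$ has $\gb$-weight zero and therefore acts on $V_\omega = \CC v$ by a scalar series $\Phi_i(z)$, one reads off $\Psi'_i(z) = \Phi_i(z)$. I would then combine the Hopf-algebra identity
\[
m \circ (\mathrm{id} \otimes S^{-1}) \circ \Delta^{\mathrm{op}}(\phi_i^+(z)) = 1
\]
with the known coproduct formula for Drinfeld currents,
\[
\Delta(\phi_i^+(z)) = \phi_i^+(z) \otimes \phi_i^+(z) + \sum_\alpha A_\alpha \otimes B_\alpha,
\]
where each $A_\alpha$ has strictly negative $\gb$-weight. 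Applied to $v$, each $S^{-1}(A_\alpha) v$ vanishes since $V_\mu = 0$ for $\mu < \omega$, so the correction drops out and $\Psi_i(z) \Phi_i(z) = 1$, giving $\Psib' = \Psib^{-1}$.

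The main obstacle is this final identification. The weight arguments for the earlier steps are routine, but computing $\Phi_i(z)$ rests on the triangular form of the coproduct $\Delta(\phi_i^+(z))$ above---a nontrivial structural fact about the interaction of the Drinfeld and Drinfeld--Jimbo presentations of $U_q(\g)$, which is the essential input beyond general Hopf-algebra formalism and weight-space bookkeeping.
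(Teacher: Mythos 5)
The paper does not prove Proposition \ref{dualweight}: it is quoted from \cite{HJ}, so there is no internal argument to compare with. Your route is the natural (and presumably the original) one, and its core steps check out: $v^*$ is a highest $\ell$-weight vector by maximality of $\omega^{-1}$; the annihilator $W^\perp$ of a submodule $W\subset V^*$ is a submodule because $u(xw)=(S(x)u)(w)$ and $S$ is bijective on $U_q(\bo)$; and the antipode computation $\Psi_i(z)\Phi_i(z)=1$ is correct, resting exactly on the triangular form of $\Delta(\phi_i^+(z))$ with corrections of strictly negative weight in the first factor --- the mirror, for $U_q(\bo)$, of the property (\ref{copro2}) that the paper itself records, so you have correctly isolated the one genuine structural input. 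Two smaller points: the equality $L'(\Psib)=U_q(\bo)^+v$ and the minimality of $\omega$ use the triangular decomposition in the order $+\,0\,-$, opposite to the one stated in (\ref{triandecomp}); this is available from the PBW theory of Beck and Damiani but should be invoked explicitly.

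The more substantive caveat concerns the generality. Your simplicity argument for the graded dual --- the step ``$W^\perp=0$ implies $W=V^*$'' --- genuinely requires the weight spaces of $L'(\Psib)$ to be finite dimensional, which is precisely the hypothesis you slip in parenthetically. For $\Psib\in\tb^*_\ell$ with some $\Psi_i(z)$ non-rational this hypothesis fails: the Hankel-matrix/Kronecker argument behind Theorem \ref{class}, transported by $\hat{\omega}$ to lowest $\ell$-weight modules, shows that $L'(\Psib)$ then has infinite-dimensional weight spaces; its graded dual acquires weight spaces of uncountable dimension and cannot be isomorphic to the cyclic module $L(\Psib^{-1})$, so no repair of this step is possible in that generality. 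In other words, your proof establishes the proposition for $\Psib\in\mathfrak{r}$, equivalently inside the categories $\mathcal{O}$ and $\mathcal{O}^*$, which is the only case the paper ever uses (the prefundamental representations $R_{i,a}^\pm$); the literal statement ``for all $\Psib\in\tb^*_\ell$'' should be read with this restriction, an imprecision of the statement rather than a defect of your method, but one you should state explicitly rather than leave implicit.
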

We will consider the prefundamental representations $R_{i,a}^\pm$ in
$\mathcal{O}^*$ defined by $(R_{i,a}^\pm)^* \simeq L_{i,a}^\mp$.

\begin{example} In the case $\gb = sl_2$, $R_{1,a}^+$ carries a basis
  $R_{1,a}^+=\oplus_{j\geq 0} \CC v_j^*$ with the explicit action
  ($r,j \geq 0$, $p > 0$, $v_{-1}^* = 0$):
\begin{align*}
&x_{1,r}^+v_j^* = \delta_{r,0} q^{2j} v_{j+1}^*\,,\quad
 x_{1,p}^-v_j^* = \frac{- aq^{1-j}\delta_{p,1}[j]_q}{q-q^{-1}} v_{j-1}^*\,,\quad
\phi_1^+(z)v_j^* = q^{2j}(1-za)v_j^*.
\end{align*}
\end{example}

\subsection{The opposite Borel and the category $\overline{\mathcal{O}}$}
It will be also convenient to use the opposite Borel $U_q(\bo^-) =
\hat{\omega}(U_q(\bo))$. By (\ref{formomega}), $U_q(\bo^-)$ is the
subalgebra of $U_q(\g)$ generated by $f_i$ and $k_i^{\pm 1}$ with
$0\le i\le n$. Hence it is a Hopf subalgebra of $U_q(\g)$.  Let
further $U_q(\bo^-)^\pm = U_q(\g)^\pm\cap U_q(\bo^-)$ and
$U_q(\bo^-)^0 = U_q(\g)^0\cap U_q(\bo)$.  Then we have
\begin{equation}\label{negger}
U_q(\bo^-)^- = \langle x_{i,-m}^-\rangle_{i\in I, m\geq 0},\quad
U_q(\bo^-)^0 = 
\langle\phi_{i,-r}^-,k_i^{\pm 1}\rangle_{i\in I, r>0}.
\end{equation} 
We have a triangular decomposition
\begin{equation}\label{triandecomp2}
U_q(\bo^-)\simeq U_q(\bo^-)^-\otimes 
U_q(\bo^-)^0 \otimes U_q(\bo^-)^+.
\end{equation}
By mimicking the definition of the category $\mathcal{O}$, we can
define the category $\overline{\mathcal{O}}$ of $U_q(\bo^-)$-modules.
For $V$ a $U_q(\bo)$-module, we have a structure of
$U_q(\bo^-)$-module on $V$ denoted by $V^{\hat{\omega}}$ and defined
by twisting the action by the automorphism $\hat{\omega}$.  In
particular, we get the simple objects $\overline{L}(\Psib) =
(L'(\Psib))^{\hat{\omega}}$ of the category $\overline{\mathcal{O}}$.
Hence, we have a parametrization of simple objects, as well as
$q$-character theory, in the category $\overline{\mathcal{O}}$ as for
the category $\mathcal{O}$. In particular we have the prefundamental
representations $\overline{L}_{i,a}^\pm =
(R_{i,a^{-1}}^\pm)^{\hat{\omega}}$ in the category
$\overline{\mathcal{O}}$.

\begin{example} In the case $\gb = sl_2$, $\overline{L}_{1,a}^+$ carries a
  basis $\overline{L}_{1,a}^+=\oplus_{j\geq 0} \CC v_j^*$ with the
  explicit action ($r,j \geq 0$, $p > 0$, $v_{-1}^* = 0$):
\begin{align*}
&x_{1,-r}^-v_j^* = -\delta_{r,0} q^{2j} v_{j+1}^*\,,\quad
 x_{1,-p}^+v_j^* = \frac{a^{-1}q^{1-j}\delta_{p,1}[j]_q}{q-q^{-1}} v_{j-1}^*\,,\quad
\phi_1^-(z)v_j^* = q^{2j}(1-(za)^{-1})v_j^*.
\end{align*}
\end{example}

\section{Baxter's relations in category $\mathcal{O}$}\label{bacato}

In this section we prove a uniform explicit $q$-character formula for
positive prefundamental representations (Theorem \ref{formuachar}): it
is equal to the product of the highest $\ell$-weight and the ordinary
character (which does not depend on the spectral parameter). We prove
that this implies for each finite-dimensional representation $V$ of
$U_q(\g)$ the existence of a relation in the Grothendieck ring of
$\mathcal{O}$ obtained from the $q$-character of $V$ (Theorem
\ref{relations}). This is our first main result, which is a
generalization of Baxter's TQ relations discussed in the Introduction.
We also prove that an arbitrary tensor product of positive
prefundamental representations in simple (Theorem \ref{stensor}).

\subsection{$q$-characters of positive prefundamental representations}

\begin{thm}\label{formuachar} Let $i\in I$. Then we have for any $a\in\CC^*$,
$$\chi_q(L_{i,a}^+) = \Psib_{i,a}\times \chi(L_{i,a}^+)\text{ and }\chi_q(R_{i,a}^+) = \Psib_{i,a}\times \chi(R_{i,a}^+).$$
\end{thm}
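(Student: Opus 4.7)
The plan is to prove the sharper statement that every weight space $V_\omega$ of $L_{i,a}^+$ is a single $\ell$-weight space, with $\ell$-weight $\Psib_{i,a}\cdot\omega$ (viewing $\omega\in\tb^*$ as a constant $\ell$-weight in $\tb^*_\ell$). Because $\varpi(\Psib_{i,a})=1$, the weight of $\Psib_{i,a}\cdot\omega$ equals $\omega$, so summing over weights will then yield $\chi_q(L_{i,a}^+) = \Psib_{i,a}\cdot\chi(L_{i,a}^+)$. The same scheme will apply to $R_{i,a}^+$, with the role of $U_q(\bo)^-$ acting on a highest $\ell$-weight vector replaced by $U_q(\bo)^+$ acting on a lowest $\ell$-weight vector.

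I would first settle the $\gb=sl_2$ case by direct inspection of the explicit actions displayed after Definition \ref{fund-rep} and in Section \ref{dualcat}. The basis vector $v_j\in L_{1,a}^+$ satisfies $\phi_1^+(z)v_j = q^{-2j}(1-za)v_j$, so its $\ell$-weight is precisely $q^{-2j}\cdot\Psib_{1,a}$; summation over $j\ge 0$ gives the factorization. The case of $R_{1,a}^+$ is analogous, with $\ell$-weights $q^{2j}\cdot\Psib_{1,a}$.

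For general $\g$, I would exploit the $sl_2$-reduction noted in the Remark of Section \ref{fdrep}: for each $k\in I$, the subalgebra $U_k\subset U_q(\bo)$ generated by $\{k_k^{\pm1},x_{k,m}^+,x_{k,m+1}^-,\phi_{k,m}^+\}_{m\ge 0}$ is isomorphic to the Borel subalgebra of $U_{q_k}(\widehat{sl}_2)$. The highest $\ell$-weight vector $v\in L_{i,a}^+$ is simultaneously a $U_k$-highest $\ell$-weight vector, and its $U_k$-highest $\ell$-weight is the prefundamental $\Psib_{1,a}$ for $U_{q_i}(\widehat{sl}_2)$ when $k=i$, and trivial when $k\ne i$. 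The $\gb=sl_2$ case then yields the claim about the $k$-th component of the $\ell$-weight for every vector in the cyclic $U_k$-submodule $U_k\cdot v$.

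The main obstacle is to transfer this information from $U_k\cdot v$ to all of $L_{i,a}^+$, since a generic weight vector is obtained from $v$ by a word in operators $x^-_{j,r}$ for arbitrary $j\in I$, $r>0$, and only the letters with $j=k$ lie in $U_k$. To control how $\phi^+_k(z)$ moves past an $x^-_{j,r}$ with $j\ne k$, I would use the Drinfeld commutation relation between $\phi^+_k(z)$ and the generating series $\sum_{s>0}x^-_{j,s}w^{-s}$, which holds inside $U_q(\bo)$ and expresses $\phi^+_k(z)x^-_{j,r}$ as a combination of $x^-_{j,s}\phi^+_k(z)$'s with rational coefficients in $z$ depending only on $B_{k,j}$. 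Inducting on the weight depth and exploiting the very rigid form of the initial $\ell$-weight $\Psib_{i,a}$ (trivial for $k\ne i$, the degree-one polynomial $1-za$ for $k=i$), each step of the induction preserves the desired $k$-th component $(1-za)^{\delta_{k,i}}\cdot\text{(scalar)}$, and running this simultaneously over all $k$ completes the proof.
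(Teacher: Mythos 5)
The decisive step of your argument --- the ``transfer'' from $U_k\cdot v$ to all of $L_{i,a}^+$ --- is exactly where the content of the theorem lies, and the induction you sketch does not close it. When you move $\phi_k^+(z)$ past a lowering operator $x_{j,r}^-$ with $C_{k,j}\neq 0$ (including $j\neq k$ adjacent to $k$), the Drinfeld relations do \emph{not} preserve the $k$-th component of the $\ell$-weight up to a scalar: in category $\mathcal{O}$ the $\ell$-weights of descendants differ from the highest $\ell$-weight by products of factors of type $A_{j,b}^{-1}$, and $A_{j,b}$ has genuinely $z$-dependent components at every node adjacent to $j$. So ``each step of the induction preserves the desired $k$-th component'' is precisely the statement to be proved, not a consequence of the commutation relations; it is a global property of the simple module $L_{i,a}^+$, invisible letter-by-letter. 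A symptom of the problem is that your scheme uses only the ``rigid form'' of the highest $\ell$-weight, yet the analogous statement fails for the negative prefundamental $L_{i,a}^-$, whose highest $\ell$-weight is equally rigid but whose normalized $q$-character contains nontrivial $A_{j,b}^{-1}$ factors (see the remark after Theorem \ref{formuachar}); any purely formal argument from the highest $\ell$-weight and the relations would be unable to distinguish the two situations. A secondary issue: for $k=i$ the cyclic $U_k$-module $U_k\cdot v$ is a highest $\ell$-weight module for the Borel of $U_{q_i}(\widehat{sl}_2)$ but need not be simple, so the explicit basis computation for the simple $sl_2$ prefundamental does not by itself determine its $\ell$-weights.

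By contrast, the paper's proof is global: it uses that $L^+_{i,aq_i^{-2k}}$ is a subquotient of $L_{i,a}^+\otimes\tilde{L}(Y_{i,aq_i^{-1}}\cdots Y_{i,aq_i^{-2k+1}})$, together with Lemma \ref{kr} (monomials of bounded $\alpha_i$-depth in $\tilde{\chi}_q(W_{k,aq_i^{1-2k}}^{(i)})$ already occur in a fixed smaller Kirillov--Reshetikhin module), to show that if some normalized $\ell$-weight of $L_{i,a}^+$ had a zero or pole at $b$, then shifting the spectral parameter would force zeros or poles at $bq_i^{-2k}$ for all $k\geq K$ among a \emph{finite} set of terms of a fixed weight --- a contradiction with the finite-dimensionality of weight spaces. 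Some argument of this kind, which rules out the $z$-dependent $A_{j,b}^{-1}$ factors using the structure of the module as a whole, is what your proposal is missing.
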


\begin{rem}
  (i) The characters $\chi(L_{i,a}^+) = \chi(L_{i,a}^-)$, are
  explicitly known and are equal to each other \cite[Theorem 6.4]{HJ}.
  Since $(R_{i,a}^+)^*\simeq L_{i,a}^-$, the character $\chi(R_{i,a}^+)$
  is also explicitly known. Besides, the formula is uniform. Hence, as
  the highest $\ell$-weight $\Psib_{i,a}$ is known, the statement of
  Theorem \ref{formuachar} is an explicit uniform $q$-character
  formula.

  (ii) Theorem \ref{formuachar} implies that the normalized
  $q$-characters $\tilde{\chi}_q(L_{i,a}^+) = \chi(L_{i,a}^+)$,
  $\tilde{\chi}_q(R_{i,a}^+) = \chi(R_{i,a}^+)$ do not depend on the
  spectral parameter $a$.

  (iii) When the multiplicity $N_i$ of $\alpha_i$ in the maximal root
  of $\gb$ is equal to $1$, this result was established in
  \cite{HJ}. It relies on an asymptotic construction of the
  representation $L_{i,a}^+$ which is only valid if $N_i = 1$. Our
  proof is different and works for all cases.
  \end{rem}

\begin{example} In the case $\gb = sl_2$, we have
$$\chi_q(L_{1,a}^+) = [(1 - za)]\left(\sum_{r\geq 0}
  [\overline{-2r\omega_1}]\right)\text{ , }\chi_q(R_{1,a}^+) = [(1 -
za)]\left(\sum_{r\geq 0} [\overline{2r\omega_1}]\right).$$
\end{example}

\begin{rem} Although positive and negative prefundamental representations
  have the same character, their $q$-characters are very
  different. For instance, in the case $\gb = sl_2$, we have
  $$\chi_q(L_{1,a}^-) = [(1 - za)^{-1}]\left(\sum_{r\geq 0}
  (A_{1,a}A_{1,aq^{-2}}\cdots A_{1,aq^{-2(r-1)}})^{-1}\right),$$ 
  $$\chi_q(R_{1,a}^-) = [(1 -
za)^{-1}]\left(\sum_{r\geq 0} (A_{1,a}A_{1,aq^2}\cdots A_{1,aq^{2(r-1)}})\right).$$
The reader may also look at the geometric $q$-character formulas
  for negative prefundamental representations established in \cite{HL}
  (see \cite[Remark 4.19]{HL} for details).
\end{rem}

\subsection{Proof of Theorem \ref{formuachar}} We will use the
following technical result.

\begin{lem}\label{kr} Let $i\in I$, $a\in\CC^*$, $0\leq K\leq k$. Let $m$ be a monomial
  occurring in $\tilde{\chi}_q(W_{k,aq_i^{1-2k}}^{(i)})$ such that the
  multiplicity of $-\alpha_i$ in $\varpi(m)$ is lower than $K$. Then
  $m$ is a monomial of $\tilde{\chi}_q(W_{K,aq_i^{1-2K}}^{(i)})$.
\end{lem}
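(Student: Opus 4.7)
The plan is to leverage the explicit combinatorial structure of KR $q$-characters. The unnormalized highest weight of $W_{k, aq_i^{1-2k}}^{(i)}$ is $\prod_{s=0}^{k-1} Y_{i, aq_i^{-1-2s}}$, while that of $W_{K, aq_i^{1-2K}}^{(i)}$ is the ``rightmost $K$'' sub-product $\prod_{s=0}^{K-1} Y_{i, aq_i^{-1-2s}}$; both normalize to $1$. The claim is that the normalized $q$-characters agree on the subset of monomials with $-\alpha_i$-multiplicity strictly less than $K$, so the two ``initial segments'' of the two $q$-characters (graded by the number of $A_{i,\cdot}^{-1}$ factors) must coincide.

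The core observation is that any such monomial $m$ can be written as a product $\prod_{j' \in I, b \in \C^*} A_{j', b}^{-u_{j', b}}$ with $\sum_b u_{i, b} < K$, and I would show that the nonzero $u_{i, b}$'s must lie in the ``initial segment'' $\{b = aq_i^{-2s} : 0 \le s \le K - 1\}$, namely those $A_{i, \cdot}^{-1}$ positions already available for the smaller KR module through cancellations of the rightmost $Y_{i,\cdot}$ factors. In the $sl_2$ example (Example \ref{exkr}), this is immediate: the normalized $q$-character is a truncated geometric series in the $A_{1, \cdot}^{-1}$'s starting from $A_{1, a}^{-1}$, and its terms of $-\alpha_1$-multiplicity $< K$ are literally identical to those of $\tilde{\chi}_q(W_{K, aq^{1-2K}}^{(1)})$.

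For general type, I would proceed by induction on $k - K \ge 0$ (with the base case $k = K$ trivial), using the tensor surjection $W_{k-K, aq_i^{1-2k}}^{(i)} \otimes W_{K, aq_i^{1-2K}}^{(i)} \twoheadrightarrow W_{k, aq_i^{1-2k}}^{(i)}$ (equivalently a $T$-system identity) to express $\chi_q(W_{k}^{(i)})$ in terms of $\chi_q(W_{K}^{(i)})$ plus corrections coming from the kernel, which one wants to show only contribute to monomials of $-\alpha_i$-multiplicity $\ge K$. An auxiliary route would be an $sl_2$-reduction at node $i$, restricting to the subalgebra generated by $\{x_{i,m}^\pm, \phi_{i,m}^+, k_i^{\pm 1}\}$ and applying the $sl_2$ case to the resulting direct summands. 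The main obstacle is controlling the coupling between $A_{i, \cdot}^{-1}$ and $A_{j, \cdot}^{-1}$ for $j$ adjacent to $i$ in the Dynkin diagram: multiplying by an $A_{j, \cdot}^{-1}$ (whose definition involves positive $Y_{i, \cdot}$ factors) can activate new $A_{i, \cdot}^{-1}$ possibilities, and one must verify that these cross-couplings produce identical local behaviour in $W_{k}^{(i)}$ and $W_{K}^{(i)}$ up to $-\alpha_i$-multiplicity $< K$. Making this ``locality in the $\alpha_i$ direction'' precise for arbitrary untwisted type is the technical heart of the argument.
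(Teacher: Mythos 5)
Your overall strategy --- induction on $k-K$ together with a splitting of $\tilde{\chi}_q(W_{k,aq_i^{1-2k}}^{(i)})$ into the $q$-character of a smaller KR module plus correction terms carrying many $A_{i,\cdot}^{-1}$ factors --- is the same shape as the paper's argument. However, there is a genuine gap at exactly the point you yourself flag as ``the technical heart'': you never establish that the correction terms (the contribution of the kernel of your surjection $W^{(i)}_{k-K}\otimes W^{(i)}_{K}\twoheadrightarrow W^{(i)}_{k}$, or equivalently the monomials not accounted for by the smaller module) only produce monomials whose $-\alpha_i$-multiplicity is at least $K$. Your ``core observation'' that the $A_{i,\cdot}^{-1}$ positions of a low-multiplicity monomial must lie in the initial segment already available to the smaller KR module is essentially a restatement of the conclusion, not an argument for it, and the $sl_2$ example does not address the real difficulty, which is precisely the cross-coupling with $A_{j,\cdot}^{-1}$ for $j$ adjacent to $i$. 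The paper closes this gap by importing a structural result on KR $q$-characters, namely \cite[Lemma 5.8]{hcr}, which gives the recursion (\ref{recu}): $\tilde{\chi}_q(W^{(i)}_{k,aq_i^{1-2k}})$ lies in $\tilde{\chi}_q(W^{(i)}_{k-1,aq_i^{3-2k}})+(A_{i,a}A_{i,aq_i^{-2}}\cdots A_{i,aq_i^{2-2k}})^{-1}\ZZ[A_{j,b}^{-1}]_{j\in I,b\in\CC^*}$. Since every monomial in the second summand is divisible by $k$ distinct factors $A_{i,\cdot}^{-1}$, a monomial whose $-\alpha_i$-multiplicity is lower than $K\le k$ must already occur in the one-step-smaller module, and one descends from $k$ to $K$ step by step. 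Without this input, or a proof of an equivalent divisibility statement, your induction does not close.

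Your alternative route via $sl_2$-reduction at node $i$ does not repair this as stated: restricting to the subalgebra generated by $x^{\pm}_{i,m}$, $\phi^{+}_{i,m}$, $k_i^{\pm 1}$ controls the $i$-dominant monomials and organizes the $q$-character into $sl_2$-strings, but converting that into the precise claim about which monomials of the full $q$-character can have small $-\alpha_i$-multiplicity is again nontrivial and is essentially the content of the analysis in \cite{hcr}. To complete your proof you should either quote \cite[Lemma 5.8]{hcr} directly, as the paper does, or supply an independent proof of the recursion (\ref{recu}).
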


\begin{proof} Let us prove the result by induction on $k-K\geq 0$. It
  it is trivial if $k = K$. 
  Now, suppose in general that $k > K$. We have \cite[Lemma 5.8]{hcr}
\begin{equation}\label{recu}\tilde{\chi}_q(W_{k,aq_i^{1-2k}}^{(i)})
  \in \tilde{\chi}_q(W_{k-1,aq_i^{3-2k}}^{(i)}) +
  (A_{i,a}A_{i,aq_i^{-2}}\cdots A_{i,aq_i^{2-2k}})^{-1}
  \ZZ[A_{j,b}^{-1}]_{j\in I, b\in\CC^*}.\end{equation}
Hence $m$ is a monomial of $\tilde{\chi}_q(W_{k-1,aq_i^{3-2k
  }}^{(i)})$. We can conclude with the induction
hypothesis.\end{proof}

Now we complete the proof of the Theorem.

\begin{proof} Let us explain the proof for the $L_{i,a}^+$. The same
  proof gives the analogous result for the $\overline{L}_{i,a}^+$. By
  using
  $\hat{\omega}$ this implies the result for the $R_{i,a}^+$. For
  $k\geq 0$, $L_{i,aq_i^{-2k}}^+$ is a subquotient of
$$L_{i,a}^+\otimes \tilde{L}(Y_{i,aq_i^{-1}}Y_{i,aq_i^{-3}}\cdots Y_{i,aq_i^{-2k + 1}}).$$
Suppose that an $\ell$-weight $\Psib = (\Psi_i(z))_{i\in I}$ occurring in
$\tilde{\chi}_q(L_{i,a}^+)$ has a pole or a zero $b\in\CC^*$. Since
$(\Psib)_{q_i^{-2k}} = (\Psi_i(zq_i^{-2k}))_{i\in I}$ occurs in
$\tilde{\chi}_q(L_{i,aq_i^{-2k}}^+)$, it can be factorized into
$$(\Psib)_{q_i^{-2k}} = \Psib_k' m_k$$ 
where $\Psib_k'$ (resp. $m_k$) occurs in $\tilde{\chi}_q(L_{i,a}^+)$
(resp. in $\tilde{\chi}_q(W_{k,aq_i^{1-2k}})$).  Let $K$ be the
multiplicity of $- \alpha_i$ in $\varpi(\Psib)\in -Q^+$. By Lemma \ref{kr},
the monomial $m_k$ occurs in $\tilde{\chi}_q(W_{K,aq_i^{1-2K}})$. We have
proved that $(\Psib)_{q_i^{-2k}}$ occurs in
$$\tilde{\chi}_q(L_{i,a}^+)\tilde{\chi}_q(\tilde{L}(Y_{i,aq_i^{-1}}
Y_{i,aq_i^{-3}}\cdots Y_{i,aq_i^{1-2K}})).$$ 
In this product, there is only a finite number
of terms of weight $\varpi(\Psib)$. But for each $k\geq K$, one of
this term has a pole or a zero $b q_i^{-2k}$. Contradiction. So
$$\tilde{\chi}_q(L_{i,a}^+) = \chi(L_{i,a}^+) = \chi(L_{i,1}^+).$$
\end{proof}

\subsection{Baxter's relations} Now we have the following.

\begin{cor}\label{quotient} For $i\in I$ and $a\in\CC^*$, we have 
  $$[\overline{\omega_i}]\frac{\chi_q(L_{i,aq_i^{-1}}^+)}{\chi_q(L_{i,aq_i}^+)}
  =
  [\overline{\omega_i}]\frac{\chi_q(R_{i,aq_i^{-1}}^+)}{\chi_q(R_{i,aq_i}^+)}
  = Y_{i,a}.
$$
\end{cor}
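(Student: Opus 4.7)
The plan is to reduce the corollary to a direct calculation of $\ell$-weights using Theorem \ref{formuachar} together with the fact that the (ordinary) characters of positive prefundamental representations are independent of the spectral parameter.

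First, I would invoke Theorem \ref{formuachar} to write $\chi_q(L_{i,b}^+) = [\Psib_{i,b}] \cdot \chi(L_{i,b}^+)$ and $\chi_q(R_{i,b}^+) = [\Psib_{i,b}] \cdot \chi(R_{i,b}^+)$ for every $b \in \CC^*$. The key observation is that the ordinary characters on the right-hand side do not depend on the spectral parameter $b$: this follows from Remark (i)--(ii) after the theorem, since $\chi(L_{i,b}^+) = \chi(L_{i,b}^-)$ is known explicitly from \cite[Theorem 6.4]{HJ} and is independent of $b$ (one can also see this directly by applying the automorphism $\tau_c$ which induces an isomorphism $L_{i,b}^+ \simeq L_{i,bc}^+$ at the level of weight spaces). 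Similarly $\chi(R_{i,b}^+)$ is independent of $b$ via the duality $(R_{i,b}^+)^* \simeq L_{i,b}^-$. Consequently,
\begin{equation*}
\frac{\chi_q(L_{i,aq_i^{-1}}^+)}{\chi_q(L_{i,aq_i}^+)} \;=\; \frac{[\Psib_{i,aq_i^{-1}}]}{[\Psib_{i,aq_i}]}, \qquad \frac{\chi_q(R_{i,aq_i^{-1}}^+)}{\chi_q(R_{i,aq_i}^+)} \;=\; \frac{[\Psib_{i,aq_i^{-1}}]}{[\Psib_{i,aq_i}]},
\end{equation*}
so both ratios agree and the first equality of the corollary is immediate.

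Next I would verify the identification with $Y_{i,a}$ componentwise in $\mfr$. Using the definition of $\Psib_{i,b}$, the $j$-th component of $[\Psib_{i,aq_i^{-1}}]/[\Psib_{i,aq_i}]$ is $1$ for $j \neq i$, and for $j=i$ it equals $(1 - aq_i^{-1}z)/(1 - aq_iz)$. Multiplying by $[\overline{\omega_i}]$, which by definition acts trivially on $j \neq i$ and as $q_i$ on $j=i$, produces the $\ell$-weight whose $i$-component is $q_i (1 - aq_i^{-1}z)/(1 - aq_iz)$ and whose other components are $1$. Comparing with the definition of the embedding $\mathcal{M} \hookrightarrow \mfr$ given by $m \mapsto m(\phi(z))$, this is precisely $Y_{i,a}(\phi(z))$, i.e.\ the image of $Y_{i,a}$ in $\mfr$. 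Since no delicate analysis is required beyond Theorem \ref{formuachar}, there is essentially no obstacle; the only subtlety is bookkeeping the factor $q_i$, which is exactly accounted for by the prefactor $[\overline{\omega_i}]$ (equivalently, this is the reason $Y_{i,a}$ has weight $\overline{\omega_i}$ rather than trivial weight).
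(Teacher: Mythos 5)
Your proposal is correct and follows essentially the same route as the paper: apply Theorem \ref{formuachar}, cancel the ordinary characters (which are independent of the spectral parameter), and identify $[\overline{\omega_i}]\,\Psib_{i,aq_i^{-1}}/\Psib_{i,aq_i}$ with $Y_{i,a}$ via the definition of the embedding $\mathcal{M}\to\mathfrak{r}$. The only difference is that you spell out the componentwise check of the highest $\ell$-weight identity, which the paper treats as immediate from the definition of $Y_{i,a}$.
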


\begin{proof} First, by definition of $Y_{i,a}$, we have the relation
  for highest $\ell$-weights:
$$[\overline{\omega_i}]\frac{\Psib_{i,aq_i^{-1}}}{\Psib_{i,aq_i}} = Y_{i,a}.$$
Now the character of prefundamental representations do not depend on
the spectral parameter:
$$\chi(L_{i,aq_i^{-1}}^+) = \chi(L_{i,aq_i}^+) \text{ and
}\chi(R_{i,aq_i^{-1}}^+) = \chi(R_{i,aq_i}^+).$$
Hence Theorem \ref{formuachar} implies the result.
\end{proof}

\begin{rem} The formulas we obtain in Corollary \ref{quotient} can
  also be seen as a change of variables (analogous to those used in
  \cite[Section 5.2.2]{HL}).\end{rem}

We can now prove generalized Baxter's relations in the category
$\mathcal{O}$ (and $\mathcal{O}^*$).

\begin{thm}\label{relations} Let $V$ be a finite-dimensional representation of
  $U_q(\g)$. Replace in $\chi_q(V)$ each variable $Y_{i,a}$ by
  $[\omega_i]\frac{[L_{i,aq_i^{-1}}^+]}{[L_{i,aq_i}^+]}$ and
  $\chi_q(V)$ by $[V]$. Then, multiplying by denominators, we get a
  relation in the Grothendieck ring of $\mathcal{O}$.

  Similarly, replacing $Y_{i,a}$ by
  $[\omega_i]\frac{[R_{i,aq_i^{-1}}^+]}{[R_{i,aq_i}^+]}$, we get a
  relation in the Grothendieck ring of $\mathcal{O}^*$.
\end{thm}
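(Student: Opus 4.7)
The strategy is to verify the claim at the level of $q$-characters via Corollary \ref{quotient} and then lift to the Grothendieck ring using the injectivity of $\chi_q$. Write $\chi_q(V) = \sum_m c_m\, m$ as a Laurent polynomial in the $Y_{i,a}^{\pm 1}$ with $c_m \in \Z_{\geq 0}$. Choose integers $N_{i,a}^{\pm} \geq 0$ dominating all positive (resp.\ negative) exponents of $Y_{i,a}$ appearing in the monomials $m$, and set
\[
D := \prod_{(i,a)} [L_{i,aq_i}^+]^{N_{i,a}^+}\,[L_{i,aq_i^{-1}}^+]^{N_{i,a}^-}.
\]
Multiplying the formal substitution $Y_{i,a} \mapsto [\overline{\omega_i}]\,[L_{i,aq_i^{-1}}^+]/[L_{i,aq_i}^+]$ through by $D$ rewrites the prescribed identity as an equation of the form $[V]\cdot D = Y$, where $Y$ is a $\Z_{\geq 0}$-combination of classes of tensor products of prefundamental representations with one-dimensional modules $[\overline{\pm\omega_i}]$ (the latter are invertible in $\on{Rep}(U_q(\bo))$, since $[\overline{\omega_i}]\otimes[\overline{-\omega_i}] = [\overline{0}]$). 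Both sides are thus honest elements of the Grothendieck ring of $\mathcal{O}$.

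Applying the ring homomorphism $\chi_q$, the image of $Y$ is precisely the result of performing the substitution $Y_{i,a} \mapsto [\overline{\omega_i}]\,\chi_q(L^+_{i,aq_i^{-1}})/\chi_q(L^+_{i,aq_i})$ inside $\chi_q(V)$ and then clearing denominators by $\chi_q(D)$. Corollary \ref{quotient} asserts that, before clearing, this substitution simply returns $\chi_q(V)$; therefore after clearing both sides reduce to $\chi_q(V)\cdot\chi_q(D) = \chi_q([V]\cdot D)$ in $\mathcal{E}_\ell$. The injectivity of $\chi_q$ (the final proposition of Section \ref{precato}) then lifts this equality back to $\on{Rep}(U_q(\bo))$, yielding the desired identity. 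The companion statement for the $R^+_{i,a}$ in the Grothendieck ring of $\mathcal{O}^*$ follows by an identical argument using the second equality in Corollary \ref{quotient}.

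The proof is essentially formal once Theorem \ref{formuachar}, whence Corollary \ref{quotient}, is available. There is no substantive representation-theoretic obstacle remaining: the main (modest) point requiring care is the purely bookkeeping one of fixing the clearing denominator $D$ globally and uniformly enough that every term in the rearranged identity genuinely lies in the Grothendieck ring before one invokes injectivity of $\chi_q$, rather than in some formal localization. With this in place, the generalized Baxter relation is nothing more than a restatement of the $q$-character identity of Corollary \ref{quotient}.
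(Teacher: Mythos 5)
Your argument is correct and is essentially the paper's own proof: both reduce the statement to the $q$-character identity of Corollary \ref{quotient} (together with $\chi_q([\omega_i])=[\overline{\omega_i}]$) and then invoke the injectivity of $\chi_q$ on $\on{Rep}(U_q(\bo))$. The only difference is that you spell out the denominator-clearing bookkeeping explicitly, which the paper leaves implicit.
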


\begin{proof} Since the $q$-character morphism is injective, the result
  follows from Corollary \ref{quotient}.  We have also used the
  $q$-character formula $\chi_q([\omega_i]) = [\overline{\omega_i}]$
  for the $1$-dimensional representation $[\omega_i]$, as explained in
  Example \ref{onedim}.
\end{proof}

\begin{example} (i) Our result generalizes the following known example in
  the case $\gb = sl_2$. We have for $a\in\CC^*$ a relation in the
  Grothendieck ring of $\mathcal{O}$:
$$[L(Y_{1,a})][L_{1,aq}^+] = [L_{1,aq^{-1}}^+][\omega_1] +
[L_{1,aq^3}^+][-\omega_1].$$
Similarly, we have in the Grothendieck ring of the category $\mathcal{O}^*$: 
$$[L(Y_{1,a})][R_{1,aq}^+] = [R_{1,aq^{-1}}^+][\omega_1] +
[R_{1,aq^3}^+][-\omega_1].$$ 

(ii) For $\gb = sl_3(\CC)$, we have 
$$\chi_q(L(Y_{1,1})) = Y_{1,1} + Y_{1,q^2}^{-1} Y_{2,q} + Y_{2,q^3}^{-1}.$$
Hence we have the following Baxter relation in the Grothendieck ring
of $\mathcal{O}$:
$$[L(Y_{1,1})][L_{1,q}^+][L_{2,q^2}^+]
= [L_{1,q^{-1}}^+][L_{2,q^2}^+][\omega_1] 
+ [L_{1,q^3}^+][L_{2,1}^+][\omega_2 - \omega_1]
+ [L_{1,q}^+][L_{2,q^4}^+][- \omega_1]
.$$

(iii) Let us give another example for $\gb$ of type $B_2$. The
  $q$-character of the $4$-dimensional fundamental representation
  $L(Y_{2,1})$ is
$$\chi_q(L(Y_{2,1})) = Y_{2,1} + Y_{2,q^2}^{-1} Y_{1,q} +
Y_{1,q^5}^{-1}Y_{2,q^4} + Y_{2,q^6}^{-1}.$$
Hence we have the following Baxter relation in the Grothendieck ring
of $\mathcal{O}$:
$$[L(Y_{2,1})][L_{2,q}^+][L_{1,q^3}^+][L_{2,q^5}^+]
= [L_{2,q^{-1}}^+][L_{1,q^3}^+][L_{2,q^5}^+][\omega_2] 
+ [L_{2,q^3}^+][L_{1,q^{-1}}^+][L_{2,q^5}^+][\omega_1 - \omega_2]$$
$$+ [L_{2,q^3}^+][L_{1,q^7}^+][L_{2,q}^+][\omega_2 - \omega_1]
+ [L_{2,q^7}^+][L_{2,q}^+][L_{1,q^3}^+][- \omega_2].$$
\end{example}

\begin{rem}
  (i) By taking the duals, Baxter's relations in Theorem
  \ref{relations} may also be written in terms of negative
  prefundamental representations. For example, in the case $\gb = sl_2$, for
  $a\in\CC^*$ we get in the Grothendieck ring of $\mathcal{O}$:
$$[L(Y_{1,aq^2})][L_{1,aq}^-] = [L_{1,aq^{-1}}^-][-\omega_1] + [L_{1,aq^3}^-][\omega_1],$$
and in the Grothendieck ring of the category $\mathcal{O}^*$: 
$$[L(Y_{1,aq^{-2}})][R_{1,aq}^-] = [R_{1,aq^{-1}}^-][-\omega_1] + [R_{1,aq^3}^-][\omega_1].$$

(ii) For quantum affine algebras of classical types, some
conjectural relations in the Grothendieck ring of the category
$\mathcal{O}$ have been proposed in \cite{S}. It is not clear to us
whether there is a connection between them and the generalized Baxter
relations that we establish in this paper.
\end{rem}

\subsection{Baxter's relations as tensor product decomposition}
In this subsection we give an additional interpretation of Baxter's
relations of Theorem \ref{relations}. This is only included for
completeness of the paper as the results of this subsection will not
be used in the other sections.

Let us first prove the following additional application of Theorem
\ref{formuachar}.

\begin{thm}\label{stensor} An arbitrary tensor product of positive
  (resp. negative) prefundamental representations in the category
  $\mathcal{O}$ is simple. The same holds in the category
  $\mathcal{O}^*$.
\end{thm}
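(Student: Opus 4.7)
The plan is to identify the tensor product $T = L_{i_1, a_1}^+ \otimes \cdots \otimes L_{i_k, a_k}^+$ with the simple module $L(\Psib)$ of highest $\ell$-weight $\Psib = \prod_{j=1}^k \Psib_{i_j, a_j}$; the remaining cases (negative prefundamentals, and both cases in $\mathcal{O}^*$) will then follow by parallel arguments together with duality via $\hat{\omega}$ and the graded dual.

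Using multiplicativity of $\chi_q$ on tensor products together with Theorem \ref{formuachar}, I first obtain
$$
\chi_q(T) \;=\; \prod_{j=1}^{k} \chi_q(L_{i_j, a_j}^+) \;=\; \Psib \cdot \prod_{j=1}^{k} \chi(L_{i_j, a_j}^+).
$$
Two structural consequences follow. First, every $\ell$-weight of $T$ is of the form $\Psib \cdot \Psib_\omega$ for some weight $\omega$, so the $\ell$-weight decomposition of $T$ agrees with its ordinary weight decomposition up to the universal shift by $\Psib$. Second, the top weight space of $T$ is one-dimensional, spanned by $v_0 = v_0^{(1)} \otimes \cdots \otimes v_0^{(k)}$, which is therefore the unique (up to scalar) highest $\ell$-weight vector of $T$, of $\ell$-weight $\Psib$.

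Next I would perform a Jordan--H\"older analysis. Any composition factor of $T$ is a simple module $L(\Psib')$ with $\Psib'$ appearing among the $\ell$-weights of $T$, hence of the form $\Psib \cdot \Psib_\omega$ for some $\omega \le 1$; since $[\omega]$ is one-dimensional, it factorizes as $L(\Psib) \otimes [\omega]$, with $q$-character $[\Psib_\omega] \cdot \chi_q(L(\Psib))$. Since $L(\Psib)$ is itself a subquotient of $T$, all its $\ell$-weights are of the form $\Psib \cdot \Psib_\omega$ as well, so $\chi_q(L(\Psib)) = \Psib \cdot \chi(L(\Psib))$ under the natural inclusion $\mathcal{E} \hookrightarrow \mathcal{E}_\ell$. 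Writing $n_\omega$ for the multiplicity of $L(\Psib) \otimes [\omega]$ in a composition series of $T$ and dividing the Jordan--H\"older identity by $\Psib$ yields
$$
\prod_{j=1}^{k} \chi(L_{i_j, a_j}^+) \;=\; \Bigl(\sum_\omega n_\omega [\omega]\Bigr) \cdot \chi(L(\Psib)) \quad \text{in } \mathcal{E},
$$
and comparison of the top weight coefficients (both equal to $1$) gives $n_1 = 1$. The desired simplicity $T = L(\Psib)$ is equivalent to $n_\omega = 0$ for all $\omega < 1$.

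The main obstacle is this final vanishing. I would handle it by induction on $k$: the base case $k = 1$ is trivial. For $k \ge 2$, write $T = L_{i_1, a_1}^+ \otimes V$ with $V$ simple by the induction hypothesis, and first establish that $T$ is cyclic, namely $T = U_q(\bo) \cdot v_0$. Cyclicity follows from an explicit analysis of how the Drinfeld generators $x_{i,r}^-$ act on $v_0$ through the coproduct $\Delta$, combined with the simplicity of both tensor factors, which guarantees that iterated application of these generators produces enough vectors to span $T$. Once cyclicity is established, $T$ becomes a highest $\ell$-weight module with simple quotient $L(\Psib)$; any non-zero kernel of the surjection $T \twoheadrightarrow L(\Psib)$ would force some $n_\omega > 0$ for $\omega < 1$, producing a strict excess on the right-hand side of the character identity above at some weight, a contradiction. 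Thus $T$ is simple. The corresponding statements for negative prefundamentals follow by applying $\hat{\omega}$, and the $\mathcal{O}^*$ versions by passing to the graded dual.
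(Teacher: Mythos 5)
Your reduction is elegant as far as it goes: using Theorem~\ref{formuachar} to show that every $\ell$-weight of $T$ is of the form $\Psib\cdot\Psib_\omega$, hence every composition factor is $L(\Psib)\otimes[\omega]$ for some $\omega\leq 1$, and pinning down $n_1=1$ from the top weight, is a clean and correct observation. But the final step, where the whole difficulty resides, is not actually carried out, and the argument you sketch for it does not close.

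Two concrete gaps. First, the cyclicity of $T=L^+_{i_1,a_1}\otimes V$ over $U_q(\mathfrak{b})$ is asserted in one sentence (``an explicit analysis of how the Drinfeld generators $x^-_{i,r}$ act on $v_0$ through the coproduct $\Delta$'') without any content. The coproduct of the $x^-_{i,r}$ in terms of Drinfeld generators is not a finite, controllable formula, and proving that a tensor product of infinite-dimensional simple highest $\ell$-weight modules is cyclic is a genuine theorem, not a routine verification; nothing you write amounts to a proof of it. Second, and more seriously, even granting cyclicity, your claimed contradiction does not exist. If $T$ is cyclic with unique simple quotient $L(\Psib)$, a nonzero radical $N$ simply contributes the factors $L(\Psib)\otimes[\omega]$ with $\omega<1$ that appear in your Jordan--H\"older identity
\[
\prod_{j=1}^{k}\chi(L^+_{i_j,a_j})=\Bigl(\sum_{\omega}n_\omega[\omega]\Bigr)\cdot\chi\bigl(L(\Psib)\bigr),
\]
and this identity is perfectly consistent with $n_\omega>0$ for $\omega<1$: it merely constrains $n_{\omega_0}\leq\dim T_{\omega_0}-\dim L(\Psib)_{\omega_0}$ for a maximal such $\omega_0$, which is vacuous unless one \emph{already} knows $\dim L(\Psib)_{\omega_0}\geq\dim T_{\omega_0}$. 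That inequality is precisely what needs to be proved, and the character identity does not supply it. The actual proof in the paper is structured to produce exactly this bound: it first treats negative prefundamentals, truncates $T$ by tensor products $L_R$ of Kirillov--Reshetikhin modules (whose irreducibility is known and whose characters converge to $\chi(T)$ by \cite[Theorem~6.1]{HJ}), and then compares $L_R$ with an auxiliary tensor product $T'=L(\Psib)\otimes(\text{positive prefundamentals})$, using Theorem~\ref{formuachar} again together with the fact that $\ell$-weights of finite-dimensional simples lie in $m\,\ZZ[A_{j,b}^{-1}]$, to conclude $\dim(L_R)_\mu\leq\dim L(\Psib)_\mu$ for all $\mu$. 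You need a step playing this role; without it the argument stalls exactly where yours does.
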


\begin{proof} First let us prove the result for a tensor product $T$
  of negative prefundamental representations in the category
  $\mathcal{O}$. It can be written in form
$$T = \bigotimes_{a\in(\CC^*/q^\ZZ)} \bigotimes_{i\in I, r\in\ZZ} (L_{i,aq^r}^-)^{\otimes n_{i,aq^r}}$$
where we have chosen a representative $a\in\CC^*$ for each class in
$\CC^*/q^\ZZ$ so that for any $i\in I$, $r\leq d_i$, we have
$n_{i,aq^r} = 0$. The ordering in the tensor product is not relevant
for this proof as the Grothendieck ring of the category $\mathcal{O}$
is commutative.  Let $\Psib$ be the highest $\ell$-weight of $T$.  We
prove that $L(\Psib)$ is isomorphic to $T$. First $L(\Psib)$ is a
subquotient of $T$.  So it suffices to prove that the dimensions of
weight spaces of $L(\Psib)$ are greater than those of $T$. 

For $R\leq 0$,
consider the simple module $L_R = \tilde{L}(M_R)$ where the monomial
$M_R$ is defined by
$$M_R = \prod_{a\in(\CC^*/q^\ZZ)} \prod_{i\in I, r > 0} 
\left(\prod_{\{r'\in r + 2 d_i\ZZ|r\geq r' >
    R\}}Y_{i,aq^{r'}}^{n_{i,aq^{r + d_i}}}\right).$$ Then $L_R$ is
isomorphic to a tensor product of (normalized) Kirillov-Reshetikhin
modules
$$L_R\simeq \bigotimes_{a\in(\CC^*/q^\ZZ)} \bigotimes_{i\in I, r\in\ZZ} 
\left(\tilde{L}\left(\prod_{\{r'\in r + 2 d_i\ZZ|r\geq r' >
      R\}}Y_{i,aq^{r'}}\right)\right)^{\otimes n_{i,aq^r}}.$$ Indeed
it suffices to prove the irreducibility of the tensor product when we
replace spectral parameters by their inverse (see \cite[Proposition
4.13]{h3}). Then the result becomes clear as the $q$-character of the
tensor product has a unique dominant monomial (see for example
\cite[Proposition 5.3]{hcr}).

Now, by \cite[Theorem 6.1]{HJ}, the character of $T$ is the limit (as a
formal power series in the negative simple roots) of the character of
$L_R$ when $R\rightarrow -\infty$. So it suffices to prove that the
dimension of weight spaces of $L_R$ are lower than those of $L(\Psib)$.

Consider the tensor product
$$T' = L(\Psib)\otimes \left(\bigotimes_{a\in(\CC^*/q^\ZZ)} \bigotimes_{i\in I,
    r\in\ZZ} (L_{i,aq^{R_{a,i,r}}}^+)^{\otimes n_{i,aq^{r}}}\right)$$
where $R_{a,i,r}$ is the lowest integer $r'$ such that $r'\in r +
2 d_i\ZZ$ and $r' > R - d_i$. Since $T'$ and $L_R$ have the same 
highest $\ell$-weight, $L_R$ is a subquotient of $T'$.

Recall that the $\ell$-weights of $L_R$ are the product of the highest
$\ell$-weight $M_R(\varpi(M_R))^{-1}$ multiplied by a product of
$A_{j,b}^{-1}$, $j\in i$, $b\in\CC^*$ \cite[Theorem 4.1]{Fre2}.
Hence, by Theorem \ref{formuachar}, an $\ell$-weight of $T'$ is an
$\ell$-weight of $L_R$ only if it of the form
$$\Psib'(M_R (\varpi(M_R))^{-1}\Psib^{-1})$$
where $\Psib'$ is an $\ell$-weight of $L(\Psib)$ and $(M_R
(\varpi(M_R))^{-1}\Psib^{-1})$ is the highest $\ell$-weight of the
remaining tensor product of positive prefundamental
representations. We get the result.

The same proof gives the result for negative prefundamental
representations in the category $\mathcal{O}^*$.  By duality, we get
the result for prefundamental representations in the category
$\mathcal{O}$ as well as in the category $\mathcal{O}^*$.
\end{proof}

The tensor product of a simple representation in $\mathcal{O}$
(resp. in $\mathcal{O}^*$) by a $1$-dimensional representation
$[\omega]$, $\omega\in\tb^*$, is clearly simple.  So, multiplying
Baxter's relation of Theorem \ref{relations} by the denominators and
using Theorem \ref{stensor}, we get the following.

\begin{cor} Baxter's relation of Theorem \ref{relations} may be
  interpreted as the decomposition, in the Grothendieck ring of
  ${\mathcal O}$ (resp. of $\mathcal{O}^*$), of the class of the
  tensor product of two simple representations into a sum of classes
  of simple representations.
\end{cor}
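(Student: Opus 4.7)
The plan is to observe that the corollary is a direct synthesis of Theorem \ref{relations} and Theorem \ref{stensor}, together with the trivial fact that tensoring a simple object by a one-dimensional representation $[\omega]$ preserves simplicity. First I would take the relation provided by Theorem \ref{relations}, in which each variable $Y_{i,a}$ appearing in $\chi_q(V)$ is replaced by $[\omega_i][L^+_{i,aq_i^{-1}}]/[L^+_{i,aq_i}]$ (resp.\ by $[\omega_i][R^+_{i,aq_i^{-1}}]/[R^+_{i,aq_i}]$ in $\mathcal{O}^*$). Multiplying by the common denominator, which is a finite product of classes of the form $[L^+_{i,aq_i}]$, clears the denominators and produces an identity in the Grothendieck ring of the form
$$[V]\cdot\Bigl[\bigotimes_{i,a} L^+_{i,aq_i}\Bigr] \;=\; \sum_{m}\, [\omega^{(m)}]\cdot\Bigl[\bigotimes_{i,a} L^+_{i,b^{(m)}_{i,a}}\Bigr],$$
where the sum runs over the monomials $m$ appearing in $\chi_q(V)$ and $\omega^{(m)}\in P$, $b^{(m)}_{i,a}\in \CC^\times$ are read off from each monomial in the standard way.

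Next I would invoke Theorem \ref{stensor}: every tensor product of positive prefundamental representations in $\mathcal{O}$ is simple, and the same in $\mathcal{O}^*$. Hence the factor $\bigotimes L^+_{i,aq_i}$ on the left-hand side is the class of a simple module $T$, and each factor $\bigotimes L^+_{i,b^{(m)}_{i,a}}$ on the right-hand side is simple; tensoring the latter with the one-dimensional module $[\omega^{(m)}]$ yields again a simple module (since a one-dimensional twist does not affect irreducibility). Consequently the left-hand side is the class of the tensor product $V\otimes T$ of two simple representations, and the right-hand side expresses $[V\otimes T]$ as an explicit $\Z_{\ge 0}$-linear combination of classes of simple modules in $\mathcal{O}$. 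The argument is identical in $\mathcal{O}^*$ after replacing each $L^+$ by $R^+$.

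There is no real obstacle here: all the content has already been extracted in Theorems \ref{relations} and \ref{stensor}. The only thing to verify is the bookkeeping that the coefficients on the right-hand side are non-negative integers, which is automatic because they are the coefficients of monomials in the $q$-character $\chi_q(V)$ of a finite-dimensional $U_q(\g)$-module $V$ and are therefore dimensions of $\ell$-weight spaces.
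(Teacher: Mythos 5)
Your proposal is correct and follows essentially the same route as the paper: multiply the relation of Theorem \ref{relations} by the denominators, then use Theorem \ref{stensor} together with the observation that tensoring a simple module by a one-dimensional representation $[\omega]$ preserves simplicity, so that (for $V$ simple) the left-hand side is the class of a tensor product of two simple objects and each term on the right-hand side is the class of a simple object, with non-negative integer multiplicities coming from $\chi_q(V)$. This matches the paper's argument, so no further comment is needed.
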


\section{Transfer-matrices and polynomiality}\label{trpol}

In this section we state our second main result: polynomiality of the
twisted transfer-matrices (and of their eigenvalues) associated to the
prefundamental representations (Theorem \ref{catOpol}). Our main
application is the proof of a version of the conjecture of Reshetikhin
and the first author (Theorem \ref{fdcase}).  We use this result to
write down the system of Bethe Ansatz equations explicitly in Section
\ref{bae}. We also derive the polynomiality of Drinfeld's Cartan
elements on finite-dimensional representations (Theorem \ref{jordan})
and prove commutativity of the twisted transfer-matrices associated to
representations in the category $\mathcal{O}$ (Theorem \ref{com}).

\subsection{Universal $R$-matrix, $L$-operators and transfer-matrices}

The universal $R$-matrix $\mathcal{R}$ of $U_q(\Glie)$ belongs to the
tensor product $U_q(\Glie)\hat{\otimes} U_q(\Glie)$ (completed for the
$\ZZ$-grading of $U_q(\Glie)$). The Cartan subalgebra of $U_q(\Glie)$
is also slightly completed: elements $e^{h\omega}$, $\omega\in
P\otimes_\ZZ \CC$, are added so that $k_i = e^{h\alpha_i}$ for $i\in
I$, as in \cite{CP} (see also \cite{da}).  We will use analogous
completions of Borel subalgebras. The completed algebras still act on
the representations we consider.

The universal $R$-matrix satisfies the Yang-Baxter equation
\begin{equation}\label{yb}\mathcal{R}_{12}\mathcal{R}_{13}\mathcal{R}_{23}
  = \mathcal{R}_{23}\mathcal{R}_{13}\mathcal{R}_{12}.\end{equation}
In fact, it is known that $\mathcal{R}$ belongs to
$U_q(\mathfrak{b})\hat{\otimes} U_q(\mathfrak{b}^-)$.  Hence for $V$ a
$U_q(\mathfrak{b})$-module, we may define the $L$-operator associated
to $V$
$$L_V(z) = (\pi_{V(z)}\otimes \text{Id}) (\mathcal{R}) \in
End(V)[[z]]\hat{\otimes} U_q(\mathfrak{b}^-),$$ where $\pi_V(z):
U_q(\mathfrak{b})\rightarrow End(V)[[z]]$ is the representation
morphism of $V$ with the $\ZZ$-grading of $U_q(\mathfrak{b})$.

Let $V$ in the category $\mathcal{O}^*$ whose weights are in
$\overline{P}\subset \tb^*$. For $g\in U_q(\mathfrak{b})$ (or in
$\text{End}(V)$), the twisted trace of $g$ on $V$ is
$$Tr_{V,u}(g) = \sum_{\lambda \in \tb^*}
Tr_{V_\lambda}(\pi_V(g))\left(\prod_{i\in I}u_i^{\lambda_i}\right) \in
\CC[[u_i^{\pm 1}]]_{i\in I},$$ where $\lambda_i\in\ZZ$ is defined by
$\lambda(i) = q_i^{\lambda_i}$.

\begin{defi} \label{gr tr}
The twisted transfer-matrix associated to $V$ is
$$t_V(z,u) = (Tr_{V,u}\otimes \text{Id}) (L_V(z))\in
U_q(\mathfrak{b}^-)[[z, u_i^{\pm 1}]]_{i\in I}.$$
\end{defi}

More precisely we have
$$t_V(z,u) \in U_q(\mathfrak{b}^-)[u_i^{\pm 1}]_{i\in I}[[z,v_i]]_{i\in I}$$
where for $i\in I$
$$v_i = \prod_{j\in I} u_j^{C_{j,i}}$$ 
corresponds to a simple root. Hence the product
of twisted transfer-matrices is well-defined.

Note that for $V = R_{i,a}^+$, we have $t_V(z,u)\in
U_q(\mathfrak{b}^-)[[z,v_i]]_{i\in I}$.

\begin{example}\label{extr} Let $i\in I$. By using Section
  (\ref{fact}), we get 
$$t_{L(\overline{\alpha_i})} = v_i k_i^{-1}\text{ and
}t_{L(\overline{\omega_i})}(z,u) = u_i \tilde{k}_i^{-1}$$
where the $\tilde{k}_i = \prod_{j\in I}k_j^{(C^{-1})_{j,i}}$ are
characterized by the relations
$$k_i = \prod_{j\in I}\tilde{k}_j^{C_{j,i}}.$$
\end{example}
For $V$ in $\mathcal{C}$, $t_V(z,u)$ is a finite sum and the variables
$u_i$ can be specialized to any non zero complex values. For example, 
consider the specialization
$$u_i = q_i^{2\sum_{j\in I}(C^{-1})_{j,i}} = q^{2\sum_{j\in I}(DC^{-1})_{i,j}}.$$ 
This means $v_i = q_i^2$ as $DC^{-1}$ is symmetric (and $u = q$ if $\gb = sl_2$). Then we recover the standard non-twisted transfer-matrix
$$t_V(z) = (Tr_V\otimes \text{Id}) (((\prod_{1\leq i\leq n}\tilde{k}_i^2)\otimes 1) L_V(z))\in
U_q(\mathfrak{b}^-)[[z]].$$

\subsection{Commutativity of twisted transfer-matrices}
It can be proved as in \cite[Lemma 2]{Fre} that for $V,V'$ in the
category $\mathcal{O}^*$ whose weights are in $\overline{P}\subset
\tb^*$, and for $W$ an extension of $V$ and $V'$ in the category
$\mathcal{O}^*$, we have
\begin{equation}\label{alg}t_{W}(z,u) = t_V(z,u) + t_{V'}(z,u)\text{
    and }t_{V\otimes V'}(z,u) = t_V(z,u) t_{V'}(z,u).\end{equation}
Let us prove the following stronger result.

\begin{thm}\label{com} For $V, V'$ in the category $\mathcal{O}^*$
  whose weights are in $\overline{P}\subset \tb^*$, we have
$$t_V(z,u)t_{V'}(w,u) = t_{V'}(w,u) t_V(z,u).$$\end{thm}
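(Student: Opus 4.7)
The plan is to derive the commutativity from the Yang--Baxter equation $(\ref{yb})$ using the standard RLL technique. Applying $(\pi_{V(z)} \otimes \pi_{V'(w)} \otimes \mathrm{Id})$ to both sides of $\mathcal{R}_{12}\mathcal{R}_{13}\mathcal{R}_{23} = \mathcal{R}_{23}\mathcal{R}_{13}\mathcal{R}_{12}$ and setting $\mathcal{R}_{VV'}(z,w) = (\pi_{V(z)} \otimes \pi_{V'(w)})(\mathcal{R})$ yields the intertwining relation
$$\mathcal{R}_{VV'}(z,w)\, L_V(z)_{13}\, L_{V'}(w)_{23} \;=\; L_{V'}(w)_{23}\, L_V(z)_{13}\, \mathcal{R}_{VV'}(z,w)$$
in the appropriate completion of $\mathrm{End}(V\otimes V')\,\hat\otimes\, U_q(\mathfrak{b}^-)$. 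Because $L_V(z)_{13}$ and $L_{V'}(w)_{23}$ act on disjoint pairs of the three factors, one can rewrite this as $\mathcal{R}_{VV'}\, L_V L_{V'}\, \mathcal{R}_{VV'}^{-1} = L_{V'} L_V$ provided $\mathcal{R}_{VV'}$ is invertible.

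Next I would apply the twisted trace $\mathrm{Tr}_{V\otimes V',\,u\otimes u}\otimes\mathrm{Id}$ to both sides. On the right, factoring $L_V L_{V'}$ across the two tensor factors produces $t_{V'}(w,u)\, t_V(z,u)$. For the left side, the key observation is that $\mathcal{R}_{VV'}(z,w)$ commutes with the Cartan twist $\tau := U_V \otimes U_{V'}$, where $U_V$ acts on the weight space $V_\lambda$ by $\prod_i u_i^{\lambda_i}$. Indeed, $\mathcal{R}$ has weight zero, so in any decomposition $\mathcal{R} = \sum a_\mu \otimes b_{-\mu}$ according to the $Q$-grading, the twist contributions $\prod_i u_i^{\mu_i}$ and $\prod_i u_i^{-\mu_i}$ coming from the two factors cancel exactly. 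Combined with ordinary cyclicity of the trace over $V\otimes V'$, this allows me to cycle $\mathcal{R}_{VV'}^{-1}$ through $\tau$ and cancel it against $\mathcal{R}_{VV'}$:
$$\mathrm{Tr}_{12}\bigl(\tau\, \mathcal{R}_{VV'} L_V L_{V'} \mathcal{R}_{VV'}^{-1}\bigr) = \mathrm{Tr}_{12}\bigl(\mathcal{R}_{VV'}^{-1} \tau\, \mathcal{R}_{VV'} L_V L_{V'}\bigr) = \mathrm{Tr}_{12}\bigl(\tau L_V L_{V'}\bigr) = t_V(z,u)\, t_{V'}(w,u),$$
which gives the desired equality.

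The main obstacle is to justify rigorously the existence and invertibility of $\mathcal{R}_{VV'}(z,w)$ when $V, V'$ are infinite-dimensional objects of $\mathcal{O}^*$. Here I would invoke the standard Khoroshkin--Tolstoy / Damiani / Beck factorization $\mathcal{R} = K \cdot \overline{\mathcal{R}}$ with $K$ the diagonal Cartan part and $\overline{\mathcal{R}}$ an ordered infinite product of $q$-exponentials of tensor products of positive and negative root vectors. Because the weights of $V$ and $V'$ are bounded above (both are in $\mathcal{O}^*$, so their graded duals are in $\mathcal{O}$), when evaluated on any fixed weight component of $V\otimes V'$ the infinite product $\overline{\mathcal{R}}$ truncates to a finite sum, producing a well-defined invertible endomorphism valued in formal series in $w^{-1}z$ and the $v_i$. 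The diagonal factor $K$ is automatically diagonal with respect to the weight decomposition, hence commutes with $\tau$ (and in fact contributes a scalar in each weight space, invertible after formal inversion). Thus $\mathcal{R}_{VV'}(z,w)$ makes sense as a well-defined invertible operator in a suitable completion, the commutation with $\tau$ holds weight-by-weight, and the cyclicity argument above produces the identity $t_V(z,u)\, t_{V'}(w,u) = t_{V'}(w,u)\, t_V(z,u)$ as an equality in $U_q(\mathfrak{b}^-)[u_i^{\pm 1}]_{i\in I}[[z,w,v_i]]_{i\in I}$.
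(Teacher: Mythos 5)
There is a genuine gap, and it is precisely the one the paper flags in the remark following Theorem \ref{com}. Your whole argument hinges on the operator $\mathcal{R}_{VV'}(z,w)=(\pi_{V(z)}\otimes\pi_{V'(w)})(\mathcal{R})$, but this object is not defined when $V'$ is a general object of $\mathcal{O}^*$. The universal $R$-matrix lies in $U_q(\mathfrak{b})\hat{\otimes}U_q(\mathfrak{b}^-)$, and in Damiani's factorization $\mathcal{R}=\mathcal{R}^+\mathcal{R}^0\mathcal{R}^-\mathcal{R}^\infty$ the second tensor slot of every non-Cartan factor involves elements of $U_q(\mathfrak{b}^-)$ that do not lie in $U_q(\mathfrak{b})$: for instance $F_{\alpha_i+m\delta}=x^-_{i,-m}$ ($m\geq 0$), $F_{m\delta-\alpha_i}=-x^+_{i,-m}k_i$ and $h_{j,-m}$ ($m>0$). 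An object of $\mathcal{O}^*$ is only a $U_q(\mathfrak{b})$-module, and (as the paper notes, and as is the case for the prefundamental modules $R^+_{i,a}$ that are the whole point here) its action does not extend to $U_q(\mathfrak{g})$, hence not to $U_q(\mathfrak{b}^-)$. So your proposed repair --- truncation of the infinite product using boundedness of weights --- does not address the actual problem: the difficulty is not convergence of an infinite product on a fixed weight space, it is that the individual operators to be applied to $V'$ are simply undefined. (Two smaller points: the weights of objects in $\mathcal{O}^*$ are bounded below, not above, since the graded dual inverts weights; and cyclicity of the twisted trace over the infinite-dimensional space $V\otimes V'$ together with invertibility of the would-be $\mathcal{R}_{VV'}$ would themselves need justification, but these never arise because the operator does not exist.) Your argument does prove the statement in the classical situation where $V,V'$ are in $\mathcal{C}$, which is exactly the ``well-known'' case cited from \cite[Lemma 2]{Fre}.

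The paper circumvents the obstruction by a purely Grothendieck-ring argument: by (\ref{alg}) the twisted transfer-matrix $t_V(z,u)$ depends only on the class $[V]$ in the Grothendieck ring of $\mathcal{O}^*$, which is commutative by \cite[Remark 3.13]{HJ}; this yields $t_V(z,u)t_{V'}(z,u)=t_{V'}(z,u)t_V(z,u)$ at \emph{equal} spectral parameter. Then, twisting $V$ by the automorphism $\tau_a$ gives an object $V_a$ of $\mathcal{O}^*$ with $t_{V_a}(z,u)=t_V(za,u)$, so commutativity at equal parameter applied to the pair $(V_a,V')$ for all $a\in\CC^*$ separates the two spectral parameters and gives the full statement. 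If you want to keep an $R$-matrix-style proof, you would need to construct genuine intertwiners between $V(z)\otimes V'(w)$ and $V'(w)\otimes V(z)$ for category $\mathcal{O}$-type modules, which is a substantial problem in itself and not available by merely evaluating $\mathcal{R}$.
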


\begin{rem}
  (i) The result is well-known when $V, V'$ are finite-dimensional in
  the category $\mathcal{C}$ (see \cite[Lemma 2]{Fre}).  But in
  general the action of $U_q(\bo)$ on $V,V'$ can not be extended to an
  action of $U_q(\Glie)$ and we can not evaluate $\pi_{V(z)}\otimes
  \pi_{V'(w)}$ on the universal $R$-matrix $\mathcal{R}$. That is why
  we can not use directly the standard proof for the general
  representations in $\mathcal{O}^*$ we consider.

(ii) The same proof as below gives the result for representations in
the category $\mathcal{O}$.
\end{rem}

\begin{proof} From (\ref{alg}), $t_V(z)$ depends only of the class of
  $V$ in the Grothendieck ring of $\mathcal{O}^*$.  But by
  \cite[Remark 3.13]{HJ}, this ring is commutative. Hence
$$t_V(z,u)t_{V'}(z,u) = t_{V'}(z,u) t_V(z,u).$$
Now let $a\in\CC^*$ and $V_a$ be the $U_q(\bo)$-module obtained from
$V$ by twisting by $\tau_a$. Then $V_a$ is in the category
$\mathcal{O}^*$ and 
\begin{equation}\label{shifttr}t_{V_a}(z,u) = t_V(za,u).\end{equation} 
Hence for any
$a\in\CC^*$ we have
$$t_V(za,u)t_{V'}(z,u) = t_{V'}(z,u) t_V(za,u).$$
This implies the result.
\end{proof}
For $V$ in category $\mathcal{O}^*$ with weights in $\overline{P}$, we
will denote
$$t_V(z,u) = \sum_{m\geq 0} t_V[m](u) z^m.$$

\subsection{Deformation of $U_q(\bo^-)^0$}

Let $t(v)$ be the $\CC[[v_i]]_{i\in I}$-subalgebra of
$U_q(\mathfrak{b}^-)[[v_i]]_{i\in I}$ generated by the
$t_{R_{i,a}^+}[m](u)$ ($i\in I$, $a\in\CC^*$, $m\geq 0$) and the
$t_{L(\overline{\omega})}[m](u)$ ($\omega\in Q^+$).

\begin{prop}\label{defo} $t(v)$ is a commutative subalgebra of
  $U_q(\mathfrak{b}^-)[[v_i]]_{i\in I}$ which is a deformation of
  $U_q(\bo^-)^0$.

More precisely, let $i\in I$. Then the limit at $v_j \rightarrow 0$
($j\in I$) of $t_{R_{i,a}^+}(z,u)$ is $T_i(za)$ where $$T_i(z) =
\text{exp}\left( \sum_{m > 0} z^m
  \frac{\tilde{h}_{i,-m}}{[d_i]_q[m]_{q_i}}\right)$$ and
$$\tilde{h}_{i,-m} = \sum_{j\in I}[d_j]_q \tilde{C}_{j,i}(q^m) h_{j,-m}.$$ 
\end{prop}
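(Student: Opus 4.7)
Commutativity of $t(v)$ is an immediate consequence of Theorem \ref{com} applied to the coefficients of the generating transfer-matrices, so the real content of the proposition is the limit formula; once that is established, the deformation claim will follow since the coefficients of the $T_{i}(z)$ together with the one-dimensional factors from Example \ref{extr} generate a topological completion of $U_{q}(\mathfrak{b}^{-})^{0}$.

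For the limit, I would use the Khoroshkin--Tolstoy/Damiani factorization $\mathcal{R}=\mathcal{R}^{+}\mathcal{R}^{0}\mathcal{R}^{-}$ of the universal $R$-matrix, where $\mathcal{R}^{\pm}$ are ordered products of exponentials in affine real root vectors and
$$
\mathcal{R}^{0}=q^{\sum_{i,j\in I}(B^{-1})_{i,j}h_{i}\otimes h_{j}}\cdot\exp\!\Bigl((q-q^{-1})\sum_{m>0}\sum_{i,j\in I}c_{i,j}(m)\,h_{i,m}\otimes h_{j,-m}\Bigr),
$$
the imaginary-root coefficients $c_{i,j}(m)$ being given by Damiani's formula in terms of the inverse symmetrized quantum Cartan matrix at $q^{m}$. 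As $v_{j}\to 0$, the twisted trace on $R_{i,a}^{+}$ localizes onto the one-dimensional lowest-weight line $\CC v_{0}$, of weight $\overline{0}$ and $\ell$-weight $\Psib_{i,a}$. On $v_{0}$, the factor $\mathcal{R}^{+}$ (whose first tensor components are real-root vectors $e_{n\delta+\alpha}$, $\alpha$ positive root of $\gb$, $n\ge 0$) produces only higher-weight components, invisible after projection to $V_{0}$; the factor $\mathcal{R}^{-}$ (real-root vectors $e_{n\delta-\alpha}$, $n>0$) annihilates $v_{0}$; and the zero-mode Cartan factor acts as $1$ because $v_{0}$ has trivial $\gb$-weight. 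Hence only the imaginary part of $\mathcal{R}^{0}$ contributes.

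The scalar action of $h_{j,m}$ on $v_{0}$ is read off from $\phi_{j}^{+}(z)v_{0}=(\Psib_{i,a})_{j}(z)v_{0}=(1-za)^{\delta_{i,j}}v_{0}$ and the series $\phi_{j}^{+}(z)=k_{j}\exp((q_{j}-q_{j}^{-1})\sum_{m>0}h_{j,m}z^{m})$: taking the logarithm gives $\pi_{V(z)}(h_{j,m})v_{0}=-\delta_{i,j}(az)^{m}/(m(q_{i}-q_{i}^{-1}))\cdot v_{0}$. Plugging this into $\mathcal{R}^{0}$ and using $(q-q^{-1})/(q_{i}-q_{i}^{-1})=1/[d_{i}]_{q}$, the desired equality with $T_{i}(za)$ reduces to the coefficient identity
$$
c_{i,j}(m)=-\frac{m\,[d_{j}]_{q}\,\tilde{C}_{j,i}(q^{m})}{[m]_{q_{i}}},
$$
which, modulo the symmetry $[d_{i}]_{q}\tilde{C}_{i,j}(q)=[d_{j}]_{q}\tilde{C}_{j,i}(q)$ coming from symmetry of $\tilde{B}(q)$, is precisely the imaginary-root coefficient of $\mathcal{R}^{0}$ in Damiani's form. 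This coefficient verification is the main obstacle: matching the normalization of the $h_{i,m}$ between the conventions of \cite{da,bec} and the present paper (notably the factor $[d_{i}]_{q}$ flagged in the remark after the definition of $\phi_{i}^{\pm}(z)$) is delicate but entirely mechanical.

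Finally, for the deformation statement, the matrix $\bigl([d_{j}]_{q}\tilde{C}_{j,i}(q^{m})\bigr)_{i,j\in I}$ is invertible for every $m>0$ since $q$ is not a root of unity, so the coefficients of $\log T_{i}(z)$ ($i\in I$) linearly span $\{h_{j,-m}\}_{j\in I}$. Combined with $t_{L(\overline{\omega_{i}})}(z,u)|_{v_{j}=0}=u_{i}\tilde{k}_{i}^{-1}$ from Example \ref{extr}, this shows the $v=0$ specializations of the generators of $t(v)$ recover a topological completion of $U_{q}(\mathfrak{b}^{-})^{0}$, so $t(v)$ is a flat commutative deformation of this algebra.
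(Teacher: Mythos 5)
Your proposal is correct and follows essentially the same route as the paper: Damiani's factorization of $\mathcal{R}$, localization of the twisted trace at $v_j\to 0$ onto the one-dimensional lowest-weight line of $R_{i,a}^+$ (where $\mathcal{R}^\pm$ and the zero-mode Cartan factor drop out), the eigenvalue $h_{j,m}\mapsto -\delta_{i,j}(az)^m/(m(q_i-q_i^{-1}))$ read off from the $\ell$-weight $\Psib_{i,a}$, and the identification with $T_i(za)$ via the relation between $\tilde{B}(q^m)$ and $\tilde{C}(q^m)$. The coefficient identity you single out as the main obstacle is exactly the normalization of $\mathcal{R}^0$ quoted from \cite{da} in Section \ref{fact}, so the paper's computation is the same check in a slightly different packaging.
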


The commutativity is a direct consequence of Theorem \ref{com}. The
rest of the Proposition will be proved in Section \ref{proofdefo}.

\begin{example} In the case $\gb = sl_2$, we have $T_1(z) = \text{exp}\left(
    \sum_{m > 0} z^m \frac{h_{1,-m}}{[m]_q(q^m +
      q^{-m})}\right)$.\end{example}
The next lemma follows from \cite[Lemma 3.1]{Fre2}.
\begin{lem}\label{commti}  $T_i(z)$ commutes with the
$x_{j,r}^\pm$ for $j\neq i$ and $r\in\ZZ$.
\end{lem}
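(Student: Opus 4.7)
The strategy is to reduce the commutation of the operator-valued series $T_i(z)$ to the commutation of its logarithmic generators $\tilde{h}_{i,-m}$. Since $T_i(z) = \exp\!\left(\sum_{m>0} z^m \tfrac{\tilde{h}_{i,-m}}{[d_i]_q [m]_{q_i}}\right)$ is a formal exponential of a power series whose coefficients are $\CC$-linear combinations of the $\tilde{h}_{i,-m}$'s alone, it suffices to prove that for every $m > 0$, every $j \neq i$, and every $r \in \ZZ$,
$$[\tilde{h}_{i,-m},\, x_{j,r}^\pm] = 0.$$
The commutativity of $T_i(z)$ then follows by expanding the exponential and applying the derivation property of the commutator term by term.

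To establish the bracket identity, I would start from the standard Drinfeld commutation relation (a consequence of the presentation recalled in Section \ref{debut})
$$[h_{k,-m},\, x_{j,r}^\pm] = \pm\,\frac{[m B_{k,j}]_q}{m\,[d_k]_q}\, x_{j,r-m}^\pm,$$
after converting the usual form $\pm \tfrac{1}{m}[m C_{k,j}]_{q_k}$ via the relation $[m C_{k,j}]_{q_k} = [m B_{k,j}]_q / [d_k]_q$ forced by $B = DC$. Substituting the definition $\tilde{h}_{i,-m} = \sum_{k\in I}[d_k]_q\,\tilde{C}_{k,i}(q^m)\,h_{k,-m}$, the factors $[d_k]_q$ cancel and one obtains
$$[\tilde{h}_{i,-m},\, x_{j,r}^\pm] = \pm\,\frac{1}{m}\left(\sum_{k\in I} \tilde{C}_{k,i}(q^m)\,[m B_{k,j}]_q\right) x_{j,r-m}^\pm.$$

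The heart of the proof is thus the purely combinatorial identity
$$\sum_{k \in I} \tilde{C}_{k,i}(q^m)\,[m B_{k,j}]_q \;=\; [d_i]_q\,[m]_{q_i}\,\delta_{i,j}.$$
I would prove this using the two ingredients: (a) the scalar identity $[m B_{k,j}]_q = [m]_q\, B_{k,j}(q^m)$, and (b) the conversion $\tilde{C}_{k,i}(q) = [d_i]_q\,\tilde{B}_{k,i}(q)$ coming from $B(q) = \mathrm{diag}([d_k]_q)\,C(q)$. Together with $\sum_k \tilde{B}_{k,i}(q^m) B_{k,j}(q^m) = \delta_{i,j}$ (symmetry of $B(q)$), the sum collapses to $[m]_q\,[d_i]_{q^m}\,\delta_{i,j} = [d_i]_q\,[m]_{q_i}\,\delta_{i,j}$, which is zero for $j \neq i$ as required.

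The main obstacle is verifying identity (a), $[m B_{k,j}]_q = [m]_q\, B_{k,j}(q^m)$, which is not formal: for $k = j$ it is immediate from $[2x]_q = [x]_q(q^x+q^{-x})$, and for $k \neq j$ it reduces, after extracting $[m d_k]_q = [m]_q[d_k]_{q^m}$, to a short case check on the pairs $(d_k, C_{k,j})$ that can occur in a finite-type Cartan matrix — these always satisfy the crucial constraint that at most one of $d_k$ and $|C_{k,j}|$ exceeds $1$. This identity (together with the analogous computation for $h_{k,m}$) is precisely the content of \cite[Lemma 3.1]{Fre2}, to which the proof ultimately appeals.
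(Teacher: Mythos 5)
Your proposal is correct and matches the paper's route: the paper proves this lemma by a one-line appeal to \cite[Lemma 3.1]{Fre2}, and your computation — reducing to $[\tilde{h}_{i,-m},x_{j,r}^{\pm}]=0$ via the Drinfeld relations and the inverse quantum Cartan matrix identity $\sum_k \tilde{C}_{k,i}(q^m)[mB_{k,j}]_q=[d_i]_q[m]_{q_i}\delta_{i,j}$ — is exactly the content of that cited lemma, as you note yourself. The only point to keep explicit is the finite-type case check ensuring $[d_kC_{k,j}]_{q^m}=[d_k]_{q^m}[C_{k,j}]_{q^m}$ (i.e.\ $|C_{k,j}|>1$ forces $d_k=1$), which you have correctly identified and handled.
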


Let $W$ be a tensor product of simple objects in $\mathcal{C}$. It has
a highest weight vector $w$ of highest $\ell$-weight $m = \prod_{j\in I,
    b\in\CC^*}Y_{j,b}^{u_{j,b}} $. Then $w$
is an eigenvector of $T_i(z)$. Let us denote by $f_i(z)$ the corresponding
eigenvalue. A straightforward computation gives 
\begin{equation}\label{fi}f_i(z) = \prod_{j\in I,b\in\CC^*} 
\exp \left(u_{j,b}(m)\sum_{r > 0}(zb^{-1})^r
    \frac{\tilde{C}_{i,j}(q^r)}{r}\right).\end{equation}
By \cite[Theorem 4.1]{Fre2}, the other $\ell$-weights of $W$
are of the form
$$M = m A_{i_1,a_1}^{-1}A_{i_2,a_2}^{-1}\cdots A_{i_N,a_N}^{-1}$$
where $i_1,\cdots, i_N\in I$ and $a_1,\cdots, a_N\in\CC^*$. 

\begin{prop}\label{firstpol} The eigenvalue of $T_i(z)$ on the
  $\ell$-weight space $W_M$ is equal to
$$f_i(z)\times \prod_{1\leq k\leq N, i_k = i} (1 - za_k^{-1}).$$
\end{prop}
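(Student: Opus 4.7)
The plan is to compute the eigenvalue of $T_i(z)$ on an $\ell$-weight vector $v \in W_M$ directly from its definition, exploiting the fact that $T_i(z) = \exp\bigl(\sum_{m>0} z^m \tilde h_{i,-m}/([d_i]_q[m]_{q_i})\bigr)$ is an exponential in pairwise commuting Cartan-type generators $h_{j,-m}$. These generators preserve $\ell$-weight spaces and act diagonally on them, so $T_i(z)$ acts on $v$ by a scalar determined by the eigenvalues $H_{j,-m}$ of $h_{j,-m}$ on $v$. Moreover, multiplicativity in the factors of $M$ reduces the proof to computing how a single factor $A_{l,a}^{-1}$ in $M$ modifies the eigenvalue relative to the highest weight contribution $f_i(z)$.

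Next I would read off $H_{j,-m}$ from $M$. Since $\phi_j^-(z)$ acts on $v$ by the expansion at $z=\infty$ of the rational function $\Psi_j^M(z)$ attached to $M$, and $\phi_j^-(z)=k_j^{-1}\exp\bigl(-(q_j-q_j^{-1})\sum_{m>0}h_{j,-m}z^{-m}\bigr)$, the numbers $H_{j,-m}$ are extracted from the coefficients of $z^{-m}$ in $\log \Psi_j^M(z)$. Since $M = m\prod_k A_{i_k,a_k}^{-1}$, $\log\Psi_j^M$ decomposes additively, and I only need to analyze $\log A_{l,a}^{-1}(\phi(z))_j$.

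The key (essentially uniform) computation is the identity
\[
[z^{-m}]\,\log A_{l,a}(\phi(z))_j\bigr|_{z=\infty}
\;=\; -\frac{a^{-m}(q-q^{-1})[m]_q\,B_{j,l}(q^m)}{m}.
\]
This is verified by a short case analysis on $C_{j,l}\in\{0,-1,-2,-3\}$, each case matching, by design of $A_{l,a}$, the value of $B_{j,l}(q)=[d_j]_qC_{j,l}(q)$ at $q^m$. Together with $q_j-q_j^{-1}=(q-q^{-1})[d_j]_q$, this gives the contribution of a single $A_{l,a}^{-1}$ factor to $H_{j,-m}$ as $-a^{-m}[m]_q B_{j,l}(q^m)/(m[d_j]_q)$. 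Plugging into the definition of $T_i(z)$ and using the identity
\[
\sum_{j\in I}\tilde C_{j,i}(q^m)\,B_{j,l}(q^m) \;=\; [d_i]_{q^m}\,\delta_{i,l},
\]
(which follows from $B=DC$ and the symmetry of $B$), together with $[m]_q[d_i]_{q^m}=[m]_{q_i}[d_i]_q$, the contribution of a single $A_{l,a}^{-1}$ to the exponent of the $T_i(z)$-eigenvalue collapses to $-\delta_{i,l}\sum_{m>0}(za^{-1})^m/m = \delta_{i,l}\log(1-za^{-1})$.

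Summing over $k=1,\ldots,N$ and exponentiating yields $f_i(z)\prod_{k:i_k=i}(1-za_k^{-1})$, as claimed. The main obstacle is the key identity relating $\log A_{l,a}(\phi(z))_j|_\infty$ to $B_{j,l}(q^m)$: it is exactly the statement that the combinatorial definition of $A_{l,a}$ is designed so that, after the duality pairing encoded by $\tilde C(q)B(q)=D(q)$, it matches the $l$-th simple-root direction. Once this identity is in place, the remaining steps are routine manipulations of $q$-integer identities.
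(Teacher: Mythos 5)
Your proposal is correct and follows essentially the same line as the paper's proof: expand $T_i(z)$ in the $h_{j,-m}$, read off their eigenvalues from the $A_{l,a}^{\pm 1}$ factors via $\phi_j^-(z)$, and use $\tilde B(q^m)B(q^m)=\mathrm{Id}$ (equivalently your $\sum_j\tilde C_{j,i}(q^m)B_{j,l}(q^m)=[d_i]_{q^m}\delta_{i,l}$, since $\tilde B_{i,j}(q^m)=\tilde C_{j,i}(q^m)/[d_i]_{q^m}$) to collapse the exponent to $\delta_{i,l}\log(1-za_k^{-1})$. You make explicit the intermediate eigenvalue formula for $h_{j,-m}$ that the paper writes down without derivation; otherwise the arguments coincide.
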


\begin{proof} Let us replace in $T_i(z)$ each $\tilde{h}_{j,-m}$ 
by the associated eigenvalue encoded by the monomial $A_{k,b}^{-1}$
($k\in I$, $b\in\CC^*$). We get
$$\text{exp}\left(-\sum_{j\in I, m > 0} z^m 
 \frac{\tilde{B}_{i,j}(q^m)[d_j]_q}{[m]_q} C_{j,k}(q^m) \frac{(q_j^m -
   q_j^{-m})}{m(q_j - q_j^{-1})}b^{-m} \right)$$
$$ = \text{exp}\left(-\sum_{j\in I, m > 0} (zb^{-1})^m 
  \frac{\tilde{B}_{i,j}(q^m)B_{j,k}(q^m)}{m} \right) =
\text{exp}\left(- \sum_{m > 0} \frac{(zb^{-1})^m
    \delta_{i,k}}{m}\right) = (1 - zb^{-1})^{\delta_{i,k}}.$$
Hence the result.\end{proof}

\subsection{Baxter polynomiality}
Let $W$ be a tensor product of simple objects in the category
$\mathcal{C}$ of finite-dimensional representations of
$U_q(\Glie)$. It has a highest weight vector $w$ of weight
$\omega$. For $i\in I$, let $f_i(z)\in\CC[[z]]$ be the eigenvalue of
$T_i(z)$ on $w$ as given in (\ref{fi}).  Let $\lambda$ be a weight of
$W$ and $ht_i(\omega -
\lambda)$ be the multiplicity of $\alpha_i$ in $\omega - \lambda$;
that is
$$
\omega - \lambda = \sum_{i \in I} ht_i(\omega -
\lambda) \cdot \alpha_i.
$$
One of the main results of this paper is the following Baxter
polynomiality.

\begin{thm}\label{catOpol} Let $i\in I, a\in\CC^*$ and $V =
  R_{i,a}^+$. Then the operator
$$(f_i(az))^{-1}t_V(z,u)\in
((\text{End}((W)_\lambda))[[v_j]]_{j\in I})[z]$$
is a polynomial in $z$ of degree $ht_i(\omega - \lambda)$. 
\end{thm}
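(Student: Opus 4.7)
The first move is to use the shift relation $t_{R_{i,a}^+}(z,u) = t_{R_{i,1}^+}(az,u)$ obtained from \eqref{shifttr}, which reduces the problem to $a=1$: one needs to show that $f_i(z)^{-1} t_{R_{i,1}^+}(z,u)$ acts on $W_\lambda$ as a polynomial in $z$ of degree $ht_i(\omega-\lambda)$. Set $V = R_{i,1}^+$. By Theorem \ref{defigrad} one has a grading $V = \bigoplus_{k\ge 0} V[k]$ with $V[0] = \CC v_0$ the lowest $\ell$-weight line. My plan is first to check that this grading is compatible with the $\Z$-grading of $U_q(\bo)$ underlying the spectral parameter twist $V(z)$, so that the trace $t_V(z,u) = (\operatorname{Tr}_{V,u}\otimes\operatorname{id})L_V(z)$ decomposes as
\[
 t_V(z,u) \;=\; \sum_{k\ge 0}\, z^k\, A_k(u),
\]
where $A_k(u) \in U_q(\bo^-)[[v_j]]$ comes from tracing the universal $R$-matrix $\mathcal R$ against $V[k]$.

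The key step is to split $\mathcal R = \mathcal R^+\,\mathcal K\,\mathcal R^-$, where $\mathcal R^\pm$ are built from root vectors of $U_q(\g)$ and $\mathcal K$ is the Cartan factor. The contribution of $V[0]$ to the trace only involves the Cartan part acting on $v_0$, and a direct calculation using the lowest $\ell$-weight $\Psib_{i,1}$ of $v_0$ identifies $A_0(u)$ with the Drinfeld--Cartan operator whose eigenvalue on any highest weight vector of $W$ is $f_i(z)$ (matching the formula \eqref{fi} and Proposition \ref{defo}). For $k\ge 1$, every pairing in $\mathcal R^\pm$ that contributes a diagonal $V[k] \to V[k]$ term on the $V$-side yields, on the $W$-side, a product of operators $x_{j_1,r_1}^- \cdots x_{j_s,r_s}^-$ from $U_q(\bo^-)$; moreover, the content of Theorem \ref{defigrad} is precisely that reaching $V[k]$ from $v_0$ requires the cumulative use of raising operators $x_{i,\cdot}^+$ at least $k$ times, so the matching paired product on $W$ must contain $x_{i,\cdot}^-$ with total multiplicity at least $k$. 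Since $W$ is finite-dimensional and such operators lower the $W$-weight by $\alpha_i$ each time, $A_k(u)$ annihilates $W_\lambda$ as soon as $k > ht_i(\omega-\lambda)$. This yields the polynomial upper bound on the $z$-degree.

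To upgrade the bound to equality, I would specialize $v_j \to 0$: by Proposition \ref{defo} this reduces $t_V(z,u)$ to $T_i(z)$, and Proposition \ref{firstpol} expresses its eigenvalue on any $\ell$-weight space $W_M$ as $f_i(z)\prod_{i_k = i}(1-za_k^{-1})$, a polynomial of exact degree equal to the multiplicity of $A_{i,\cdot}^{-1}$ factors in $M$. Choosing an $\ell$-weight $M$ on $W_\lambda$ with maximal $\alpha_i$-content, which exists by \cite[Theorem 4.1]{Fre2} and whose $\alpha_i$-content equals $ht_i(\omega-\lambda)$, produces a nonzero top coefficient and pins down the degree.

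The main obstacle, I expect, is Step 2: rigorously identifying $A_0(u)$ with $f_i(z)$ times a $u$-dependent Cartan operator that acts trivially after dividing by $f_i(z)$, and then controlling precisely how the grading of Theorem \ref{defigrad} forces each $A_k(u)$ to carry a product of $k$ lowering operators on $W$ involving $\alpha_i$. The twisted trace and the Cartan deformation parameters $u_j$ complicate the bookkeeping, and it is here that the specific construction of $R_{i,a}^+$ (as the dual of $L_{i,a}^-$ from \cite{HJ}) and the explicit grading of Theorem \ref{defigrad} must be used in earnest; everything else amounts to combining the already-established commutativity (Theorem \ref{com}), the $q$-character formula (Theorem \ref{formuachar}), and the classical-limit polynomiality of Proposition \ref{firstpol}.
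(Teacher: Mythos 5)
Your proposal correctly identifies the main ingredients (the grading of Theorem~\ref{defigrad}, the factorization of the universal $R$-matrix, Proposition~\ref{defo}, Proposition~\ref{firstpol}), and the overall strategy -- bound the $z$-degree using the grading, then pin down the exact degree by specializing $v_j\to0$ -- is the one the paper uses. However there are two genuine gaps.

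First, the claimed decomposition $t_V(z,u)=\sum_k z^k A_k(u)$ with ``$A_k(u)$ coming from tracing $\mathcal R$ against $V[k]$'' conflates two distinct gradings. The power of $z$ in $t_V(z,u)$ is governed by the $\Z$-degree of elements of $U_q(\bo)$ appearing in the first tensor leg of $\mathcal R$ (via $\pi_{V(z)}(g)=z^{\deg g}\pi_V(g)$), \emph{not} by which graded piece $V[k]$ one traces over. Indeed, the Cartan factor $\mathcal R^0$ already acts on the lowest-weight line $V[0]$ alone and produces the full power series $T_i(z)$ (Proposition~\ref{defo}), so the $V[0]$-trace alone contributes $z^m$ for all $m\ge 0$. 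The grading of Theorem~\ref{defigrad} is not compatible with the $\Z$-grading in the strict sense you suggest: its point is precisely the controlled \emph{incompatibility} recorded in condition (3) ($x_{j,r}^+$ of $\Z$-degree $r$ may raise the grading by $\delta_{i,j}$), which is what produces the polynomial of degree $ht_i(\omega-\lambda)$. The correct mechanism, following the paper's Section~\ref{proofgene}, is: a monomial of $\Z$-degree $R$ in the first leg of $\mathcal{R}^+\mathcal{R}^0\mathcal{R}^-$ has a nonvanishing twisted trace on $V$ only if the $\alpha_i$-content of the $\Phi_0^+$-roots appearing in its $\mathcal R^+$-part is at least $R$; balancing the $Q$-degrees forces the same $\alpha_i$-content on the $\mathcal R^-$-side, and the corresponding $F$'s acting on $W_\lambda$ vanish unless this content is at most $ht_i(\omega-\lambda)$.

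Second, you omit the reduction step (Section~\ref{reduc}): the paper first reduces to $W$ a tensor product of \emph{thin} fundamental representations (with separate care for $E_8$). This is not cosmetic: Proposition~\ref{firstpol} controls only the \emph{eigenvalues} of $T_i(z)$, and it is the 1-dimensionality of the $\ell$-weight spaces of a thin $W$ that lets one conclude that the \emph{operator} $(f_i(z))^{-1}T_i(z)$ restricted to $W_\lambda$, which appears as a factor in each term~\eqref{termdeux}, is genuinely a polynomial rather than a power series with polynomial eigenvalues (polynomiality of the off-diagonal part is Theorem~\ref{jordan}, which is \emph{derived from} Theorem~\ref{catOpol}, so cannot be assumed here). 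Without this reduction the argument for polynomiality, as opposed to a mere degree bound on each $z$-coefficient, does not close, and the $E_8$ case requires the additional $\ell$-weight-space analysis via the subalgebra $U\simeq U_q(\widehat{sl_2})$ carried out at the end of Section~\ref{proofgene}.
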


This Theorem \ref{catOpol} will be proved in Section \ref{gradation}
and Section \ref{endproof}. By Theorem \ref{com} and Proposition
\ref{defo}, it implies immediately the following:

\begin{cor}\label{catOcase} Let $i\in I$, $a\in\CC^*$ and $V =
  R_{i,a}^+$. Then the eigenvalues of
$t_V(z,u)$ on $(W)_\lambda$ are of the form
$$f_i(za) Q_i(za,u),$$
where $Q_i(z,u)$ is a polynomial in $z$ of degree $ht_i(\omega -
\lambda)$. \end{cor}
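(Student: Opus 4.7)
The plan is to deduce the corollary directly from the combination of Theorem \ref{catOpol} and Theorem \ref{com}. By Theorem \ref{catOpol}, restricted to the finite-dimensional weight space $(W)_\lambda$, the rescaled operator $(f_i(az))^{-1} t_V(z,u)$ equals a polynomial $\sum_{k=0}^{N} B_k z^k$ of degree $N = ht_i(\omega - \lambda)$, whose coefficients $B_k$ lie in $(\mathrm{End}((W)_\lambda))[[v_j]]_{j\in I}$. By Theorem \ref{com}, $t_V(z,u)$ and $t_V(w,u)$ commute for all $z,w$; expanding as formal power series forces the Fourier coefficients $t_V[m](u)$ to commute pairwise. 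Since $f_i(az)$ is an invertible scalar series, each $B_k$ is a $\CC[[v_j]]$-linear combination of the $t_V[m](u)$, so the family $\{B_0,\ldots,B_N\}$ is mutually commuting.

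Next, I would exploit simultaneous triangularizability of commuting operators on the finite-dimensional space $(W)_\lambda$. Working over the fraction field $\CC((v_j))_{j\in I}$ (and, if necessary, an algebraic closure), one picks a basis in which all $B_k$ are upper triangular. On each diagonal entry $t_V(z,u)$ then acts by $f_i(az) \cdot \sum_{k=0}^{N} \beta_k z^k$, where $\beta_k$ is the corresponding diagonal entry of $B_k$. Defining $Q_i(za,u) := \sum_{k=0}^N \beta_k z^k$, viewed as a polynomial in $za$ of degree at most $N = ht_i(\omega-\lambda)$, yields precisely the claimed form $f_i(za)\,Q_i(za,u)$ for each eigenvalue of $t_V(z,u)$ on $(W)_\lambda$.

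The genuinely hard input, namely the polynomiality of $(f_i(az))^{-1} t_V(z,u)$ at the operator level, is entirely carried by Theorem \ref{catOpol}; the remaining work is formal. The only mild subtlety is giving meaning to ``eigenvalues'' when matrix entries live in the power-series ring $\CC[[v_j]]$ rather than in a field. I would handle this by passing to $\CC((v_j))_{j\in I}$ as above, or alternatively by a deformation argument anchored at $v = 0$: by Proposition \ref{defo}, $t_V(z,u)$ specializes there to $T_i(az)$, whose eigenvalues on the $\ell$-weight spaces of $W$ were explicitly determined in Proposition \ref{firstpol} and already have the predicted shape $f_i(za)\prod(1-zaa_k^{-1})^{\delta_{i,i_k}}$. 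Either route transports the operator-level polynomiality of Theorem \ref{catOpol} to the eigenvalue-level statement of the corollary.
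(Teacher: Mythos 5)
Your overall route is the paper's route: the authors state Corollary \ref{catOcase} as an immediate consequence of Theorem \ref{catOpol} together with Theorem \ref{com} and Proposition \ref{defo}, and your fleshing-out --- the coefficients $B_k$ commute because the $t_V[m](u)$ do, so they can be simultaneously triangularized over an algebraic closure of the fraction field of $\CC[[v_j]]_{j\in I}$, and the eigenvalues are read off the diagonal --- is exactly the intended formalization of ``immediately''.

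There is, however, one loose end: as written, your main argument only gives that each eigenvalue is $f_i(za)$ times a polynomial of degree \emph{at most} $ht_i(\omega-\lambda)$, while the corollary asserts equality, and this does not follow from the operator-level exact degree in Theorem \ref{catOpol} alone (the leading coefficient $B_N$ could be nonzero yet have vanishing diagonal entries in a triangularizing basis, e.g.\ if it were nilpotent). The paper settles this at the end of Section \ref{endproof} precisely by the specialization you mention only as an aside: the eigenvalues are roots of characteristic polynomials with coefficients in $\CC[[v_j]]_{j\in I}$, hence specialize at $v_j=0$, where by Proposition \ref{defo} the operator becomes $T_i(az)$ and by Proposition \ref{firstpol} its eigenvalues have degree exactly $ht_i(\omega-\lambda)$; therefore the leading coefficients $\beta_N$ are nonzero. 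Promote your ``deformation argument anchored at $v=0$'' from an alternative way of defining eigenvalues to the explicit argument for the exact degree, and your proof coincides with the paper's.
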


We now give some applications of these results.

\subsection{Proof of the conjecture of Reshetikhin and the first author}

Our main application is the proof of a version of the conjecture of
Reshetikhin and the first author from \cite{Fre} about the spectra of
transfer-matrices. It is a consequence of Theorem \ref{catOpol} and
Corollary \ref{quotient}. Let us use the notation of the previous
section.

\begin{thm}\label{fdcase} Let $W$ be as above and $V$ a
  finite-dimensional representation of $U_q(\Glie)$. Then every
  eigenvalue of $t_V(z,u)$ on $(W)_\lambda$ may be expressed as
  $\chi_q(V)$ in which we replace each variable $Y_{i,a}$ by
\begin{equation}    \label{QQ}
q_i^{ht_i(\omega - \lambda)} a_iu_i \frac{f_{i}(azq_i^{-1})
Q_i(azq_i^{-1},u)}{f_{i}(azq_i)Q_i(azq_i,u)},
\end{equation}
where the series $f_{i}(z)\in\CC[[z]]$ given by formula (\ref{fi}) and
the numbers
\begin{equation}    \label{ai}
a_i = \prod_{j\in I}q_j^{-(C^{-1})_{j,i} \omega(\alpha_i^\vee)},
\qquad i \in I,
\end{equation}
depend only on $W$ (and not on the eigenvalue), whereas $Q_i(z,u)$ is
a polynomial in $z$ of degree $ht_i(\omega - \lambda)$ that depends on
the eigenvalue.
\end{thm}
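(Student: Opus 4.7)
The plan is to combine the generalized Baxter relation in the Grothendieck ring of $\mathcal{O}^*$ (Theorem \ref{relations}, stated in its $R^+$-version) with the polynomiality statement of Corollary \ref{catOcase} and the commutativity theorem (Theorem \ref{com}). By additivity of $V\mapsto t_V(z,u)$ from (\ref{alg}), I may assume $V$ is simple without loss of generality.

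First, apply Theorem \ref{relations} to produce an identity in the Grothendieck ring of $\mathcal{O}^*$: multiplying by the denominators, $[V]\cdot\prod_{i,a}[R^+_{i,aq_i}]^{n_{i,a}}$ equals a sum, one term for each monomial $m=\prod Y_{i,a}^{n_{i,a}(m)}$ occurring in $\chi_q(V)$, of a product of one-dimensional classes $[\omega]$ with classes $[R^+_{i,b}]$. Next, pass this identity through the transfer-matrix construction. Because (\ref{alg}) makes $V\mapsto t_V(z,u)$ a ring homomorphism on the Grothendieck ring, and because the image lands in the commutative algebra $t(v)$ by Theorem \ref{com}, one obtains a genuine operator identity on $W$. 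Theorem \ref{com} further gives a simultaneous eigenspace decomposition refining the weight decomposition of $W$, so I may fix a joint eigenvector $w$ of weight $\lambda$ and work with scalar eigenvalues.

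On such a $w$, Corollary \ref{catOcase} evaluates $t_{R^+_{i,b}}(z,u)$ as $f_i(zb)Q_i(zb,u)$ with $Q_i$ polynomial in its first argument of degree $ht_i(\omega-\lambda)$, and Example \ref{extr} together with $t_{[\omega_i]}(z,u)=u_i\tilde k_i^{-1}$ evaluates the one-dimensional classes as a scalar determined by $\tilde k_i^{-1}$ acting on $(W)_\lambda$. Writing $\lambda=\omega-\sum_k ht_k(\omega-\lambda)\,\alpha_k$ and using the symmetry of $DC^{-1}$ (equivalent to the symmetry of $B(q)=DC(q)$ at $q=1$), a short calculation gives
\[
\tilde k_i^{-1}\big|_{(W)_\lambda}=\prod_{j\in I}q_j^{-(C^{-1})_{j,i}\lambda(\alpha_j^\vee)}=a_i\cdot q_i^{ht_i(\omega-\lambda)},
\]
which is exactly the scalar prefactor in (\ref{QQ}). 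Substituting these eigenvalues into the operator identity and dividing by the nonzero scalar eigenvalues of $\prod_{i,a}t_{R^+_{i,aq_i}}(z,u)^{n_{i,a}}$ expresses the eigenvalue of $t_V(z,u)$ on $w$ as $\chi_q(V)$ with each $Y_{i,a}$ replaced by the right-hand side of (\ref{QQ}).

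The heavy lifting has already been done in Theorems \ref{relations}, \ref{com}, and \ref{catOpol}, so no new conceptual obstacle arises here. The one delicate point is the bookkeeping that identifies $\tilde k_i^{-1}|_{(W)_\lambda}$ with $a_i\cdot q_i^{ht_i(\omega-\lambda)}$; this rests on the Cartan-matrix identity $\sum_j d_j(C^{-1})_{j,i}C_{j,k}=d_i\delta_{i,k}$ obtained from the symmetry of $DC^{-1}$. A secondary subtlety is ensuring that the polynomials $Q_i(z,u)$ attached to $w$ genuinely depend only on the joint eigenvector and carry the degree stated, but this is furnished directly by Corollary \ref{catOcase} applied eigenvector by eigenvector.
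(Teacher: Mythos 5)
Your argument is correct and follows essentially the same route as the paper: combine the Baxter relation (Corollary \ref{quotient}/Theorem \ref{relations}) with commutativity (Theorem \ref{com}), the eigenvalue formula of Corollary \ref{catOcase} for $t_{R^+_{i,a}}$, and Example \ref{extr} for the one-dimensional classes, where the paper obtains the prefactor $a_i u_i q_i^{ht_i(\omega-\lambda)}$ from the commutation $\tilde{k}_i x_{j,r}^- = q_i^{-\delta_{i,j}} x_{j,r}^-\tilde{k}_i$, which is the same computation as your weight-space evaluation of $\tilde{k}_i^{-1}$ via the identity $\sum_j d_j(C^{-1})_{j,i}C_{j,k}=d_i\delta_{i,k}$.
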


The polynomials $Q_i(z,u)$ are the analogues of the Baxter polynomials
(see the Introduction for more details).

For $\omega = \lambda$, Theorem \ref{fdcase} follows from the
  definition of $\chi_q(V)$, as explained in \cite[Section
6.1]{Fre}. In this case, $Q_i(z,u)=1$ for all $i\in I$.

\begin{proof} By Theorem \ref{com}, it suffices to replace in Baxter's
  formula of Corollary \ref{quotient} the classes of representations
  $R_{i,a}^+$, $L(\overline{\omega_i})$ by the eigenvalues of their
  respective twisted transfer-matrices. For $R_{i,a}^+$, we use the
  formula in Corollary \ref{catOcase}. For $L(\overline{\omega_i})$, 
  by Example \ref{extr}, we
  have to compute the eigenvalue of $u_i\tilde{k}_i^{-1}$. Let
  $a_i$ be the eigenvalue of $\tilde{k}_i^{-1}$ on $w$. We get the
  formula $a_i u_i q_i^{ht_i(\omega -\lambda)}$ because for $j\in I$,
  $r\in \ZZ$ we have $\tilde{k}_i x_{j,r}^- = q_i^{-\delta_{i,j}}
  x_{j,r}^- \tilde{k}_i$.
\end{proof}

\begin{rem}\label{remfdcase} (i) Suppose that $V$ is irreducible and
let $m_V$ be its highest weight dominant monomial.
By \cite[Theorem 4.1]{Fre2}, any monomial in $\chi_q(V)$ has the form
$$M_V = m_V \prod_{k=1}^N A_{i_k,a_k}^{-1}.$$
Let us write out $M_V$ as the product of the $Y_{i,a}$, and then
replace each $Y_{i,a}$ by the corresponding factor $a_i u_i
f_i(zaq_i^{-1})/f_i(zaq_i)$ appearing in Theorem \ref{fdcase}. We
obtain a scalar function, which we denote by $F_{M_V}$. A
straightforward computation using formula (\ref{fi}) yields that the
ratio between $F_{M_V}$ and $F_{m_V}$ is given by the following
rational function in $z$:
$$
\frac{F_{M_V}}{F_{m_V}} =
\prod_{k=1}^N
v_{i_k}^{-1} q_{i_k}^{-\on{deg}(P_{i_k})}
\frac{P_{i_k}(z^{-1}a_k^{-1}q_{i_k})}{P_{i_k}(z^{-1}a_k^{-1}q_{i_k}^{-1})},
$$
where the $P_j, i \in I$, are the Drinfeld polynomials of the highest
weight vector $w$ of $W$.

Therefore, it follows from Theorem \ref{fdcase} that all eigenvalues
of $t_V(z,u)$ on $W$ are rational functions in $z$ up to one and the
same overall scalar factor, namely $F_{m_V}=$
$$\prod_{i,j \in I, a\in\CC^*} \left((v_jq_j^{-\omega(\alpha_i^\vee)})^{(C^{-1})_{j,i} } \exp\left( \sum_{r > 0, b\in\CC^*}\frac{u_{j,b}(m)(zab^{-1})^r (q_i^{-r} - q_i^r)\tilde{C}_{i,j}(q^r)}{r}  \right)\right)^{u_{i,a}(m_V)}
$$
where $u_{i,a}(m_V)$ (resp. $u_{i,a}(m)$) 
is the power of $Y_{i,a}$ in $m_V$ (resp. in the highest monomial $m$ of $W$).


This is in agreement with the fact that up to a scalar function the
operator $\pi_W(t_V(z))$ is rational in $z$ (see \cite{ifre} and
\cite[Proposition 9.5.3]{efk}). Our result gives a description of
the eigenvalues of the $u$-deformation of this operator.


This is also in agreement with the calculations in \cite[Section
6]{Fre}.

(ii) It follows from the definition that each $z^m$-coefficient of
$Q_i(z,u)$ is a root of a polynomial whose
coefficients are in the ring of formal Taylor power series in the
$v_j$, $j\in I$ (hence it belongs to the algebraic closure of the field
of fractions of this ring). We expect these series to be expansions of
rational functions near
the point $v_j = 0$, $j\in I$. It would be interesting to prove that
this is indeed the case and to describe the poles of these rational
functions.
\end{rem}

\begin{example} (i) For $\gb = sl_2(\CC)$ and $V = L(Y_{1,q^{-1}})$,
  we get as in (\ref{relB})
$$D(z)\frac{
Q_1(zq^{-2},u)}{Q_1(z,u)} + A(z)
\frac{Q_1(zq^2,u)}{Q_1(z,u)},$$
$$\text{with }A(z) = (D(zq^2))^{-1} = a_1^{-1}u_1^{-1}q^{-ht_1(\omega - \lambda)}
\frac{f_1(zq^2)}{f_1(z)}.$$
Dividing by $D(z) = F_{Y_{1,q^{-1}}}(z)$ we get a rational fraction in
$z$ (see Remark \ref{remfdcase} (i)):
$$\frac{
Q_1(zq^{-2},u)}{Q_1(z,u)} +
v_1^{-1}q^{-\on{deg}(P)-2ht_1(\omega - \lambda)}\frac{P(z^{-1}q)}{P(z^{-1}q^{-1})}
\frac{Q_1(zq^2,u)}{Q_1(z,u)},$$
where $P$ is the Drinfeld polynomial corresponding to the highest
monomial $m$ of $W$.

(ii) In general there are more than $2$ terms. For $\gb = sl_3(\CC)$
and $V = L(Y_{1,q^{-1}})$, we get
$$D_1(z)\frac{Q_1(zq^{-2},u)}{Q_1(z,u)} + (D_1(zq^2))^{-1}D_2(z)
\frac{Q_1(zq^2,u)Q_2(zq^{-1},u)}{Q_1(z,u)Q_2(zq,u)} +
(D_2(zq^2))^{-1} \frac{Q_2(zq^3,u)}{Q_2(zq,u)},$$
$$\text{with }D_1(z) = a_1u_1q^{ht_1(\omega -
  \lambda)}\frac{f_1(zq^{-2})}{f_1(z)}, \quad \text{and} \quad D_2(z) =
a_2u_2q^{ht_2(\omega - \lambda)}\frac{f_2(zq^{-1})}{f_2(zq)}.$$
Dividing by $D_1(z) = F_{Y_{1,q^{-1}}}(z)$ we get a rational fraction
in $z$ (see Remark \ref{remfdcase} (i)):
$$\frac{Q_1(zq^{-2},u)}{Q_1(z,u)} +
v_1^{-1}q^{-\on{deg}(P_1)+(ht_2 - 2ht_1)(\omega - \lambda)}\frac{P_1(z^{-1}q)}{P_1(z^{-1}q^{-1})}
\frac{Q_1(zq^2,u)Q_2(zq^{-1},u)}{Q_1(z,u)Q_2(zq,u)}$$
 $$+
v_1^{-1}v_2^{-1}q^{-\on{deg}(P_1P_2)-(ht_1 + ht_2)(\omega - \lambda)}
\frac{P_1(z^{-1}q)P_2(z^{-1})}{P_1(z^{-1}q^{-1})P_2(z^{-1}q^{-2})}
\frac{Q_2(zq^3,u)}{Q_2(zq,u)},$$
where $P_1,P_2$ are the Drinfeld polynomials corresponding to the
highest monomial $m$ of $W$.
\end{example}

\subsection{Bethe Ansatz equations}\label{bae}

We now derive the generalized Bethe Ansatz equations from
Theorem \ref{fdcase} following \cite[Section 6.3]{Fre}.

According to Theorem \ref{fdcase}, each eigenvalue of $t_V(z,u)$ on a
finite-dimensional representation $W$ is a sum of terms, each having
the product of the functions of the form $f_{i}(azq_i)Q_i(azq_i,u)$ in
the denominator.

Suppose that $W = \otimes_{j=1}^N L(\Psib_j)$, where
\begin{equation}    \label{Psij}
\Psib_j = (\Psi_{j,i}(z))_{i\in I},
\quad
\Psi_{j,i}(z) = 
q_i^{\on{deg}(P_i)}\frac{P_{j,i}(zq_i^{-1})}{P_{j,i}(zq_i)},
\end{equation}
where $P_{j,i}(z)$ are polynomials (see Section \ref{fdrep}).  The
zeros of $f_{i}(azq_i)$ differ from the roots of $P_{j,i}(z)$ by
powers of $q$, and therefore the corresponding poles in the
eigenvalues of $t_V(z,u)$ on $W$ are to be expected. But the roots
of $Q_i(azq_i,u)$ give rise to extraneous poles, which
we do not expect to have in the eigenvalues of $t_V(z,u)$. Therefore
they should cancel each other, and this must happen uniformly for all
$V$.

We expect that, at least for generic values of $q$, the only possible
way for this to happen is for the poles in the monomials of the form
$M$ and $M A_{i,aq_i}^{-1}$ in the $q$-character of $V$ to cancel
out.

We also expect that each root of $Q_i(z,u)$ has multiplicity one. Then
each cancellation of this type gives rise to an equation, which says
that the sum of the residues of the terms in the eigenvalues
corresponding to $M$ and $M A_{i,aq_i}^{-1}$ at the poles coming from
the roots of $Q_i(zaq_i,u)$ is equal to $0$.

To write down these equations explicitly, let us set
$$
Q_i(z,u) = \prod_{k=1}^{m_i} (w^{(i)}_k-z), \qquad m_i=ht_i(\omega -
\lambda).
$$
Recall that $Q_i(z,u)$ is a polynomial in $z$ whose coefficients are
in the algebraic closure of the field of fractions of the ring of
formal Taylor power series in the $v_i, i \in I$. Hence each root
$w^{(i)}_k$ belongs to the same field. Further, since the $Q_i(z,u)$
enter the eigenvalues through the ratios \eqref{QQ}, we do not lose
any generality by normalizing $Q_i(z,u)$ this way.

An explicit calculation along the lines of those in \cite[Section
6]{Fre} gives us the following system of equations on the $w^{(i)}_k$:
\begin{equation}    \label{bae gen}
v_i \prod_{j=1}^N q_i^{\deg P_{j,i}}
  \frac{P_{j,i}(q_i^{-1}/w^{(i)}_k)}{P_{j,i}(q_i/w^{(i)}_k)} =
  \prod_{s \neq k} q_i^2
  \frac{w^{(i)}_k-w^{(i)}_sq_i^{-2}}{w^{(i)}_k-w^{(i)}_s q_i^{2}}
  \prod_{l \neq i} \prod_{s=1}^{m_l} q^{C_{li}}
  \frac{w^{(i)}_k-w^{(l)}_sq^{-C_{li}}}{w^{(i)}_k-w^{(l)}_s
  q^{C_{li}}}.
\end{equation}

These are the generalized Bethe Ansatz equations corresponding to a
given collection of polynomials $P_{j,i}, j=1,\ldots,N; i \in I$.

These equations come from ``local'' pole cancellations, in the sense
that they occur between monomials of the form $M$ and $M
A_{i,aq_i}^{-1}$. Therefore the equations are the same for
any choice of the representation $V$.

If ${\mathfrak g} = \wh{sl}_2$ and $W = \otimes_{j=1}^N
W^{(1)}_{R_j,b_jq^{-R_j+1}}$ (we use the
notation of Example \ref{exkr}), then these equations become
\begin{equation}    \label{bae sl2}
v \prod_{j=1}^N q^{R_j} \frac{w_k-b_j q^{-R_j}}{w_k-b_j q^{R_j}}
= \prod_{s\neq k} q^2 \frac{w_k-w_sq^{-2}}{w_k-w_s q^{2}}, \qquad
k=1,\ldots,ht(\omega - \lambda).
\end{equation}

Formulas (\ref{bae gen}) and (\ref{bae sl2}) specialize to formulas
(6.6) and (6.5) of \cite{Fre}, respectively, if we set $v_i=q_i^2$
(the factor $-q^{-N}$ on the RHS of formula (6.5) in \cite{Fre} should
be replaced by $q^{2m-4}$, and similarly for formula (6.6) in
\cite{Fre}).

\begin{example}\label{expequaun} For $N = R_1 = 1 = k$ and $b_1 = q^{-1}$, 
we recover the well-known relations
$$v q \frac{w_1-q^{-2}}{w_1 - 1} = 1\text{ and }w_1 = \frac{1 -
  vq^{-1}}{1 - vq}.$$ 
\end{example}




We expect that for generic $q$ and generic polynomials $P_{j,i}$, it
is possible prove along the lines of the above argument that any
eigenvalue of the transfer-matrices $t_V(z,u)$ on $(W)_\lambda$ gives
rise to a solution of the Bethe Ansatz equations (\ref{bae
  sl2}). Furthermore, we expect that the converse is true as
well. Thus, we arrive at the following conjecture, which is a version
of the ``completeness of Bethe Ansatz'' (at the level of eigenvalues).

\begin{conj}
  For generic $q$ and generic polynomials $P_{j,i}$, there is a
  one-to-one correspondence between the eigenvalues of the
  transfer-matrices $t_V(z,u)$ on $(W)_\lambda$, where $W =
  \otimes_{j=1}^N L(\Psib_j)$, with $\Psib_j$ given by formula
  (\ref{Psij}) and the solutions of the Bethe Ansatz equations
  (\ref{bae gen}) with $w^{(i)}_k, k=1,\ldots,ht_i(\omega - \lambda);
  i \in I$.
\end{conj}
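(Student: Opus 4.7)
The plan is to split the conjecture into three parts: (a) every eigenvalue of $t_V(z,u)$ on $(W)_\lambda$ gives rise to a solution of the Bethe Ansatz equations (\ref{bae gen}); (b) every solution of (\ref{bae gen}) arises in this way; and (c) the correspondence is one-to-one, so in particular the total number of solutions (counted with the appropriate multiplicity) matches $\dim (W)_\lambda$.

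For (a), I would proceed by cancellation of spurious poles. By Theorem \ref{fdcase}, each eigenvalue equals $\chi_q(V)$ with $Y_{i,a}$ replaced by the expression \eqref{QQ}. Since $\pi_W(t_V(z))$ is a rational function of $z$ whose poles are controlled only by the Drinfeld polynomials $P_{j,i}$ of the highest weight vector of $W$, the apparent poles at the zeros of $Q_i(azq_i,u)$ must cancel. For generic $q$ and generic $P_{j,i}$, no two distinct monomials in $\chi_q(V)$ should contribute competing residues at the same location except the pairs $(M, M A_{i,aq_i}^{-1})$ singled out in the text: $A_{i,aq_i}^{-1}$ is the unique monomial factor in the definition of the $A_{i,b}$'s that produces a zero of $Q_i$ at the prescribed point. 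Computing the residues on such a pair and demanding that they sum to zero produces equation (\ref{bae gen}) at $w^{(i)}_k = aq_i$. A separate perturbative argument in the $v_j$ is required to upgrade this to the statement that the roots of $Q_i(z,u)$ are simple generically; here Remark \ref{remfdcase}(ii) is the starting point.

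For (b), the natural strategy is to construct, for each solution of (\ref{bae gen}), an eigenvector of the transfer-matrices. One route is a nested algebraic Bethe Ansatz built from the Drinfeld generators $x_{i,r}^-$ applied to the highest weight vector: one writes a candidate off-shell Bethe vector whose coefficients depend rationally on the $w^{(i)}_k$, and checks that the action of $t_V(z,u)$ generates unwanted terms which vanish exactly when the $w^{(i)}_k$ satisfy (\ref{bae gen}). An approach more in the spirit of the present paper would be to realize the Bethe vector directly as an intertwining matrix element between $W$ and the tensor product of prefundamental representations $\bigotimes_{i,k} R^{+}_{i,w^{(i)}_k q_i^{-1}}$, leveraging Theorem \ref{stensor} (irreducibility of the factor) together with the fact that $t_{R_{i,a}^+}(z,u)$ has $Q_i$ as its eigenvalue polynomial on $W$.

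For (c), I would aim for a generating-function/dimension count: one identifies the scheme cut out by (\ref{bae gen}) with a fiber of a Wronskian- or Schubert-type map whose generic degree is $\dim (W)_\lambda$, in the spirit of Mukhin--Tarasov--Varchenko. The main obstacle, in my view, will be precisely this completeness step, together with the injectivity of the assignment ``eigenvalue $\mapsto$ unordered tuple $\{w^{(i)}_k\}$''. Part (a) ought to follow from the residue analysis already implicit in the proof of Theorem \ref{fdcase}, and the twist by the $u_i$ should help separate the spectrum and avoid Jordan blocks; but producing eigenvectors for every solution, and ruling out coincidences of Bethe roots at higher multiplicity across different simple eigenspaces, will very likely require substantial new tools for general untwisted $\g$.
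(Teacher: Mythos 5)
There is a genuine gap here, and it is worth being clear about its nature: the statement you are addressing is stated in the paper as a \emph{conjecture}, and the paper offers no proof of it. The authors only say that they \emph{expect} that, for generic $q$ and generic $P_{j,i}$, one can argue along the lines of the pole-cancellation heuristic that every eigenvalue yields a solution of (\ref{bae gen}), and that they expect the converse as well. Your proposal reproduces this program but does not close any of its open steps. In part (a), the key assertion that the spurious poles must cancel ``locally'', i.e.\ only between the terms attached to monomials $M$ and $MA_{i,aq_i}^{-1}$, is itself only an expectation in the paper (as is the simplicity of the roots of $Q_i(z,u)$); moreover the coefficients of $Q_i(z,u)$ are a priori only elements of the algebraic closure of the field of fractions of $\CC[[v_j]]_{j\in I}$ (Remark \ref{remfdcase}(ii)), and even their rationality in the $v_j$ is left as an expectation, so your ``perturbative argument in the $v_j$'' is not something you can currently carry out from Theorem \ref{catOpol} and Corollary \ref{catOcase} alone.

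For parts (b) and (c) the situation is worse: the paper works entirely at the level of eigenvalues, constructs no Bethe vectors, and proves no completeness or injectivity statement. Your two suggested routes --- a nested algebraic Bethe Ansatz with off-shell vectors built from the $x_{i,r}^-$, or a realization of Bethe vectors as intertwining matrix elements involving tensor products of prefundamental representations (using Theorem \ref{stensor}) --- are plausible directions, but neither is developed here or reduced to results available in the paper, and the Wronskian/Schubert-type degree count you invoke for (c) is likewise not supplied. You candidly acknowledge that these steps ``will very likely require substantial new tools'', which is exactly right: what you have written is a research plan for attacking the conjecture, not a proof, and it should not be presented as establishing the statement.
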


\subsection{Example}\label{sl2ex} We suppose that $\gb = sl_2$, $V =
R_{1,1}^+$, and $W = W_{N,q^{1-2N}}^{(1)}$ is a KR-module (we use the
notation of Example \ref{exkr}). There is an explicit formula for the
$R$-matrix (see Section \ref{fact} below). We have
$(\pi_W(x_0^-))^{N+1} = (\pi_W(kx_{-1}^+))^{N+1} = 0$.  So the image
$\mathcal{L}_V(z)$ of $L_V(z)$ in
$(\text{End}(V)\otimes\text{End}(W))[[z]]$ is a product
$$\mathcal{L}_V(z) = \mathcal{L}_V^+(z)(\text{Id}_V\otimes
\pi_W(T(z)))\mathcal{L}_V^-(z)\mathcal{L}^\infty$$
$$\mathcal{L}_V^+(z) = \sum_{0\leq r\leq N} \frac{((q^{-1} -
  q)\pi_V(x_{1,0}^+)\otimes \pi_W(x_{1,0}^-))^r}{q^{\frac{r(r-1)}{2}}[r]_q!},$$
$$\mathcal{L}_V^-(z) = \sum_{0\leq r\leq N} \frac{((q - q^{-1})z
  \pi_V(k_1^{-1}x_{1,1}^-)\otimes
  \pi_W(x_{1,-1}^+)k_1)^r}{q^{\frac{r(r-1)}{2}}[r]_q!}$$
$\mathcal{L}_V^\infty = (\pi_V\otimes \pi_W) (\mathcal{R}^\infty)$ does
not depend on $z$.

\noindent By taking the twisted trace, the image of the twisted
transfer-matrix in $(\text{End}(W))[[u, z]]$ is
$$\sum_{0\leq r\leq N} \frac{(-(q -
  q^{-1})^2z)^r}{q^{r(r-1)}([r]_q!)^2}  (Tr_{V,u}\otimes \pi_W)(
(x_{1,0}^+\otimes x_{1,0}^-)^r(1\otimes
T_1(z))(k_1^{-1}x_{1,1}^-\otimes x_{1,-1}^+ k_1)^r
\mathcal{L}_V^\infty)$$
$$=\sum_{0\leq r\leq N} \frac{((q - q^{-1})z)^r}{[r]_q!}
\pi_W((x_{1,0}^-)^r T_1(z) (x_{1,-1}^+ k_1)^r) \sum_{m\geq r}
u^{2m}\begin{bmatrix}m\\r\end{bmatrix}_q
q^{\frac{r(3-r)}{2}-rm}
\pi_W(k_1^{-m}).$$
Let $0\leq j\leq N$. Note that $h_{1,-m}.w_0 =
\frac{q^{Nm}[Nm]_q}{m}w_0$ for $m > 0$.
Then $(x_{1,-1}^+ k_1)^r k_1^m.w_j = q^{r(N-2)-m(N-2j)} w_{j-r}$
is an eigenvector of $T_1(z)$
with eigenvalue $f_1(z) P_{j-r}(z)$ where 
$$f_1(z) = \text{exp}\left(\sum_{m > 0}z^m
  \frac{q^{Nm}[Nm]_q}{m[m]_q(q^m + q^{-m})}\right)$$
does not depend of $r,j$ and 
$$P_{j-r}(z) = (1 - z)(1 - zq^2)\cdots (1 - zq^{2(j-r-1)})$$ 
is a polynomial in $z$ of degree $j - r$ (this can be computed by hand
or with Proposition \ref{firstpol}). Since $(x_{1,0}^-)^r w_{j-r} =
\frac{[j]_q![N-j+r]_q!}{[j-r]_q![N-j]_q!} w_j$, this implies that
$w_j$ is an eigenvector of $(f_1(z))^{-1}t_V(z,u)$ with eigenvalue
$Q_1(z,u)$ equal to
$$\sum_{0\leq r\leq N}
\frac{(q - q^{-1})^r[N-j+r]_q!\begin{bmatrix}j\\r\end{bmatrix}_qz^r(1
  - z)\cdots (1 - zq^{2(j-r-1)})}{q^{r(\frac{r+1}{2} - N )}[N-j]_q!}
\sum_{m\geq r}
u^{2m}\begin{bmatrix}m\\r\end{bmatrix}_q q^{-m(r+N-2j)}$$ which is a
polynomial in $z$. It is
clear that the degree is at most $j$. By taking the limit $u = 0$,
only the term with $m = r = 0$ contributes and we see that the degree
is exactly $j$.

Note that in addition each $z^m$-coefficient of
  $Q_1(z,u)$ is rational in $v = u^2$.

\begin{example}\label{exppol1} For $N = j = 1$, we get the well-known Baxter polynomial
$$Q_1(z,u) = \frac{(1-z)(1-u^2q^{-1}) + zu^2(q - q^{-1})}{(1 - u^2q)(1
  - u^2q^{-1})}.$$
It has degree $1$ and its root is $w_1 = \frac{1 - u^2 q^{-1}}{1 -
  u^2q}$ which specializes at $(1 + q + q^2)^{-1}$
for $u^2 = v = q^2$. This is the same as in Example \ref{expequaun}
above. It means $w_1$ is not a pole of
$$q \frac{Q_1(zq^{-2},u)}{Q_1(z,u)} + q^{-2} u^{-2} \frac{1 -
  z^{-1}}{1 - z^{-1}q^{-2}}\frac{Q_1(zq^2,u)}{Q_1(z,u)}.$$
\end{example}

\subsection{Polynomiality of Drinfeld's Cartan elements}\label{secjordan}
Let us now give the second application of our main results.

By Proposition \ref{defo}, Corollary \ref{catOcase} implies:
\begin{thm}\label{jordan} Let $i\in I$ and $f_{i}(z)\in\CC[[z]]$ the
  eigenvalue of $T_i(z)$ on a highest weight vector of $W$.
Then on $(W)_\lambda$ the operator
$$(f_{i}(z))^{-1}T_i(z)\in  (\text{End}((W)_\lambda))[z]$$ 
is a polynomial in $z$ of degree $ht_i(\omega - \lambda)$. 
\end{thm}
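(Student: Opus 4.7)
The plan is to obtain Theorem \ref{jordan} as a direct specialization of Theorem \ref{catOpol} using Proposition \ref{defo}. The idea is simple: $T_i(z)$ arises as the $v \to 0$ limit of the transfer-matrix $t_{R_{i,1}^+}(z,u)$, and the polynomiality statement specializes accordingly.

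First, I would apply Theorem \ref{catOpol} with $V = R_{i,1}^+$ (the spectral parameter $a=1$ suffices since the general case is obtained by translation via $\tau_a$). This gives that $(f_i(z))^{-1} t_{R_{i,1}^+}(z,u)$ is a polynomial in $z$ of degree $ht_i(\omega-\lambda)$ with coefficients in $\mathrm{End}((W)_\lambda)[[v_j]]_{j \in I}$, where $f_i(z)$ is the highest-weight eigenvalue of $T_i(z)$ on $w$. Next, by Proposition \ref{defo}, the specialization at $v_j = 0$ of $t_{R_{i,1}^+}(z,u)$ is exactly $T_i(z)$. Since $f_i(z)$ does not depend on the $v_j$ (it is the scalar eigenvalue of $T_i(z)$ on the highest weight vector, computed via formula (\ref{fi})), setting $v_j = 0$ in the polynomial identity produced by Theorem \ref{catOpol} yields the identity
\[
(f_i(z))^{-1} T_i(z) = \bigl((f_i(z))^{-1} t_{R_{i,1}^+}(z,u)\bigr)\Big|_{v=0}
\]
in $\mathrm{End}((W)_\lambda)[z]$, exhibiting $(f_i(z))^{-1} T_i(z)$ as a polynomial of degree at most $ht_i(\omega-\lambda)$.

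The only nontrivial point is to ensure that the degree does not drop upon specialization to $v = 0$, and this I expect to be the main (though mild) obstacle. It suffices to exhibit a single generalized eigenvector in $(W)_\lambda$ on which the coefficient of $z^{ht_i(\omega-\lambda)}$ in $(f_i(z))^{-1} T_i(z)$ acts non-trivially. By Corollary \ref{catOcase}, every eigenvalue of $t_{R_{i,1}^+}(z,u)$ on $(W)_\lambda$ has the form $f_i(z) Q_i(z,u)$ with $\deg_z Q_i = ht_i(\omega-\lambda)$, and by Proposition \ref{firstpol} one can read off the leading coefficient of $Q_i(z,0)$ explicitly from a suitable highest $\ell$-weight vector of $(W)_\lambda$, checking that it is non-zero. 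The $\widehat{sl}_2$ computation of Section \ref{sl2ex} (in particular Example \ref{exppol1}) shows that this leading coefficient survives the $v \to 0$ limit, and the same argument adapts uniformly to arbitrary $\mathfrak{g}$ since the leading term of $Q_i(z,u)$ is captured already by the action of the Drinfeld generators on an $\ell$-weight vector of appropriate weight. This concludes the proof.
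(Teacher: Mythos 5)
Your proposal matches the paper's argument in all essentials: the degree\emph{-at-most} bound is obtained by specializing Theorem \ref{catOpol} at $v_j=0$ via Proposition \ref{defo}, and the degree\emph{-at-least} is supplied by Proposition \ref{firstpol}, which computes the eigenvalue of $T_i(z)$ on each $\ell$-weight space $W_M$ explicitly as $f_i(z)\prod_{k:\,i_k=i}(1-za_k^{-1})$, a polynomial of degree exactly $ht_i(\omega-\lambda)$. This is precisely the route the paper takes at the end of Section \ref{proofgene}, where the exact degree in Theorem \ref{catOpol}, Corollary \ref{catOcase} and Theorem \ref{jordan} are nailed down together.

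One small remark on exposition: the last paragraph of your proof takes an unnecessary detour. Because Proposition \ref{firstpol} is a statement about $T_i(z)$ itself (not about $t_{R_{i,1}^+}(z,u)$), it directly gives the eigenvalue of $(f_i(z))^{-1}T_i(z)$ on each $\ell$-weight space of $(W)_\lambda$ as a monic polynomial of degree $ht_i(\omega-\lambda)$; there is no ``does the leading coefficient survive $v\to 0$'' question to address, and no appeal to the $\widehat{sl}_2$ example of Section \ref{sl2ex} is needed. Relatedly, be careful to use only the degree-at-most content of Theorem \ref{catOpol}/Corollary \ref{catOcase} at that stage: in the paper the exact-degree assertion in those statements is itself \emph{deduced} from Proposition \ref{firstpol} together with the specialization, so citing them for the exact degree would be circular. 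As written you avoid the circularity because you ultimately invoke Proposition \ref{firstpol}, but phrasing the argument as ``(i) degree $\le ht_i(\omega-\lambda)$ from the bound in Section \ref{proofgene} specialized at $v=0$; (ii) degree $\ge ht_i(\omega-\lambda)$ from Proposition \ref{firstpol}'' makes the logical dependence cleaner.
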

Note that this is compatible with Proposition \ref{firstpol} which
gives the polynomiality, up to $f_i(z)$, for the eigenvalues (the
result here is much stronger as we get the polynomiality also for the
off-diagonal elements).

As an illustration, let us check this result by hand in the most
elementary not trivial case. Suppose that $\gb = sl_2$. Let $$V =
L(Y_{1,1})\otimes L(Y_{1,1}).$$ An highest weight vector of $V$ is an
eigenvector of $T_1(z)$. Let $g(z)$ be the eigenvalue.  We study the
action on the $2$-dimensional space $V_0$. $g(z)^{-1}T_1(z)$ has a
unique eigenvalue on $V_0$ which is $(1-zq^{-1})$.  Let us study the
representation as in \cite[Examples 2.2, 3.3]{h3}.  From the relation
$[x_{1,-1}^+,x_{1,0}^-] = - \phi_{1,-1}^-/(q - q^{-1})$, we get
$h_{1,-1} = q^2f_0f_1 - f_1f_0$.  Let $w^+$ be a highest weight vector
of $L(Y_{1,1})$. Then $(w^+, w^- = f_1w^+)$ is a basis of
$L(Y_{1,1})$.  We use analogous notation $(v^+,v^- = f_1v^+)$ for a
second copy of $L(Y_{1,1})$.  Then $(w^-\otimes v^+, q^{-2} w^-\otimes
v^+ + w^+\otimes w^-)$ is a basis of $V_0$. In this basis, for $m > 0$
the action of $h_{1,-m}$ is the matrix
$\begin{pmatrix}\frac{[m]_q}{m}(1-q^{-2m})&a_m\\0&\frac{[m]_q}{m}(1-q^{-2m})\end{pmatrix}$
where $a_m\in\CC^*$ and $a_1 = q - q^{-4}$. For $r \geq 0$, the vector
$x_{1,-r}^-.(w^+\otimes v^+)$ in this basis has the form
$\begin{pmatrix}\lambda_r \\ \mu_r\end{pmatrix}$ where
$\lambda_r,\mu_r\in\CC$ and $\lambda_0 = 0$, $\mu_0 = 1$. We have the
relation $[h_{1,-m},x_{1,-r}^-] = - \frac{[2m]_q}{m}x_{1,-m - r}^-$
for $m > 0$, $r\geq 0$. Since $h_{1,-m}.(w^+\otimes v^+) =
\frac{2[m]_q}{m}(w^+\otimes v^+)$, it implies
$$\begin{cases}-\frac{[m]_q}{m}(1+q^{-2m})   \lambda_r +  a_m \mu_r  =
  - \frac{[2m]_q}{m} \lambda_{m+r},
  \\ [m]_q(1+q^{-2m}) \mu_r = [2m]_q \mu_{m+r}.\end{cases}$$ The
second equation with $m = 1$ implies $\mu_r = q^{-r}$ for $r\geq
0$. Then the first equation with $m = 1$ reads $- q^r \lambda_r +
\frac{q^2 - q^{-3}}{q + q^{-1}} = - q^{1+r} \lambda_{1+r}$ which
implies $\lambda_r = q^{-r} r \frac{q^{-3} - q^2}{q + q^{-1}}$ for
$r\geq 0$. Now the first equation with $r = 0$ gives $a_m = - [2m]_q
q^{-m} \frac{q^{-3} - q^2}{q + q^{-1}}$. Hence the image of
$g(z)^{-1}T_1(z)$ in the basis is
$$(1 - zq^{-1}) \text{exp}\left(\sum_{m > 0}z^m
  \frac{\begin{pmatrix}0&- [2m]_q q^{-m}  \frac{q^{-3} - q^2}{q +
        q^{-1}}\\0&0\end{pmatrix}}{[m]_q(q^m + q^{-m})}\right)$$
$$= (1 - zq^{-1}) \begin{pmatrix}1&- \sum_{m > 0}z^m \frac{[2m]_q
    q^m}{[m]_q(q^m + q^{-m})}  \frac{q^{-3} - q^2}{q +
    q^{-1}}\\0&1\end{pmatrix}
= \begin{pmatrix}1 - zq^{-1}&z \frac{q - q^{-4}}{q + q^{-1}}\\0&1 -
  zq^{-1}\end{pmatrix}.$$ It is a polynomial of degree $1$ in $z$.

\subsection{Plan of the proof of Theorem \ref{catOpol}} We first
establish (Theorem \ref{defigrad}) a grading of positive
prefundamental representations $R_{i,a}^+$ which has good
compatibility properties with the action of $\ZZ$-graded elements in
$U_q(\bo)$. Section \ref{gradation} is entirely devoted to this
grading and the proof of its existence. We believe that the study of
this grading will be interesting independently of the applications it
finds in this paper.

Then in Section \ref{fact} we recall the factorization of the
universal $R$-matrix. We give the proof of Proposition \ref{defo} in
Section \ref{proofdefo}. Then we explain why it suffices to consider
the case that $W$ is a tensor product of fundamental representations
with $\ell$-weight spaces of dimension $1$ (or of dimension at most
$2$ for $\gb$ of type $E_8$). This is crucial to control the action of
the Cartan factor of the universal $R$-matrix. By using the grading of
Theorem \ref{defigrad}, we can also control the action of the positive
and negative parts, as we explain in Section \ref{endproof}.

\section{A grading of positive prefundamental
  representations}\label{gradation}

To prove Theorem \ref{catOpol}, we first establish the existence of a
certain grading on positive prefundamental representations with nice
properties with respect to the action of Drinfeld generators (Theorem
\ref{defigrad}). We believe this grading is also of independent
interest.

\subsection{The grading}

Let us fix $i\in I, a\in\CC^*$. The main result of this section is the
following.

\begin{thm}\label{defigrad} There exists a grading by
  finite-dimensional vector spaces
$$R_{i,a}^+ = \bigoplus_{m\in\ZZ} (R_{i,a}^+)_m$$
such that 

(1) for $m\geq 0$ and $x\in U_q(\bo)^-$ of degree $r > 0$, we have
$$x((R_{i,a}^+)_{m})\subset (R_{i,a}^+)_{m-r}.$$

(2) for $j\in I$, $r, m\geq 0$ we have
$$\phi_{j,r}^+ (R_{i,a}^+)_{m}\subset (R_{i,a}^+)_{m - r}\text{ if $j\neq i$},$$
$$(\phi_{i,r}^+ + a\phi_{i,r-1}^+ + \cdots + a^r
\phi_{i,0}^+)(R_{i,a}^+)_{m}\subset (R_{i,a}^+)_{m - r},$$

(3) For $j\in I$ and $r\geq 0$ we have 
$$x_{j,r}^+((R_{i,a}^+)_{m})\subset  (R_{i,a}^+)_{m -r} + (R_{i,a}^+)_{m -r + \delta_{i,j}}.$$
\end{thm}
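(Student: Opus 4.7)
My plan is to construct the grading on $R_{i,a}^+$ as the associated graded of a filtration on $R_{i,a}^+$ coming from a modified loop grading on $U_q(\bo)^+$. Define $\deg'(x_{j,r}^+) := \delta_{ij} - r$ for $j \in I$ and $r \geq 0$, and extend multiplicatively. The first task is to verify that $\deg'$ is a well-defined $\ZZ$-grading on $U_q(\bo)^+$: the $q$-Serre relations and the Drinfeld-type quadratic relations among the $x_{j,r}^+$ are each supported on monomials with a fixed multiset of type-indices $\{j_\alpha\}$ and a fixed total loop index $\sum r_\alpha$, on which $\deg'$ depends only, so every defining relation is $\deg'$-homogeneous.

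Let $v_0$ be the lowest $\ell$-weight vector of $R_{i,a}^+$. The lowest weight property $U_q(\bo)^- v_0 = \CC v_0$ together with the triangular decomposition \eqref{triandecomp} yields $R_{i,a}^+ = U_q(\bo)^+ \cdot v_0$. I would then define the increasing filtration
\begin{equation*}
F_m := \on{span}\{x \cdot v_0 \mid x \in U_q(\bo)^+,\ \deg'(x) \leq m\}
\end{equation*}
and set $(R_{i,a}^+)_m := F_m / F_{m-1}$. Condition (3) is immediate from the multiplicativity of $\deg'$: $x_{j,r}^+ \cdot F_m \subset F_{m + \delta_{ij} - r}$, inducing the claimed containment on associated graded pieces.

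For condition (2), I would use the fact that on each $\ell$-weight space of $R_{i,a}^+$ the operator $\phi_{j,r}^+$ acts via the scalar dictated by $\Psib_{i,a}\cdot\omega$. The identity $\phi_j^+(z) v_0 = (1-za)^{\delta_{ij}} v_0$ together with the structure of $R_{i,a}^+$ as a simple module (Proposition~\ref{simple}) forces $\phi_{j,r}^+ \equiv 0$ on $R_{i,a}^+$ for $j \neq i$ and $r \geq 1$, and the combination $\tilde\phi_r := \sum_{k=0}^r a^k \phi_{i,r-k}^+$ to vanish for $r \geq 1$; the $r=0$ cases act by weight-dependent scalars. In each case the filtration is preserved with the shift demanded by (2). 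Condition (1) is then proved by induction on the length of $y \in U_q(\bo)^+$ in $x_{j,r}^-(y v_0) = [x_{j,r}^-, y] v_0$ (valid since $x_{j,r}^- v_0 = 0$ for $r > 0$): the Drinfeld relation $[x_{j,r}^-, x_{k,s}^+] = -\frac{\delta_{jk}}{q_j - q_j^{-1}} \phi_{j,r+s}^+$ (the $\phi^-$ piece vanishes because $r+s > 0$) produces a commutator term controlled using (2), while the transposition term is handled by the inductive hypothesis combined with (3).

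The main obstacle is twofold. First, verifying finite-dimensionality of each $(R_{i,a}^+)_m$ requires showing that only finitely many weight spaces of $R_{i,a}^+$ contribute nontrivially to $F_m$ modulo $F_{m-1}$; this is where the explicit $q$-character formula of Theorem~\ref{formuachar} (proved earlier in the paper) enters. Second, the $\deg'$-bookkeeping in the inductive step of condition (1) involves moving $\phi^+_{j,r+s}$-factors through $x^+$-words using a Drinfeld relation of the form $\phi_j^+(z) x_k^+(w) = q^{\pm B_{jk}} \frac{z - q^{\mp B_{jk}} w}{z - q^{\pm B_{jk}} w} x_k^+(w) \phi_j^+(z)$. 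The shift $\delta_{ij}$ in the definition of $\deg'$ is calibrated precisely so that the loop-index-lowering by $r$ on the $x^-$ side matches the $\alpha_i$-count shift generated by each such commutator $\phi^+$-term; combined with the vanishing of $\tilde\phi_r$ and $\phi_{j,r}^+$ (for $j \neq i$, $r \geq 1$) on $R_{i,a}^+$, the induction closes up cleanly.
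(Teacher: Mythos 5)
Your filtration is set up sensibly ($\deg'$ is indeed a well-defined grading on $U_q(\bo)^+$, being a $\ZZ$-linear combination of the loop grading and the $\alpha_i$-height of the $Q$-grading), but the argument has a genuine gap at its core: the claimed vanishing on all of $R_{i,a}^+$ of $\phi^+_{j,r}$ for $j\neq i$, $r\geq 1$, and of $\sum_{k=0}^r a^k\phi^+_{i,r-k}$ for $r\geq 1$. Theorem \ref{formuachar} only determines the \emph{generalized} eigenvalues of the Cartan currents (the $\ell$-weight spaces are defined by $(\phi^+_{j,m}-\Psi_{j,m})^p v=0$), and simplicity of $R_{i,a}^+$ does not force the commutative algebra $U_q(\bo)^0$ to act semisimply. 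Controlling the nilpotent parts $h_{j,r}-\lambda_{j,r}$ relative to the grading is precisely the content of condition (2) (see Remark \ref{nilph} and the nontrivial factor $L_V^0(z)$ in Section \ref{proofgene}); the vanishing you assert is essentially the statement of Theorem \ref{asympt}(1), which is proved only for $N_i=1$ via the asymptotic construction, and the $B_2$ example in Section \ref{simpleun} is included in the paper exactly to show that this simple structure breaks down when $N_i\geq 2$. Since both your proof of (2) and the inductive step of your proof of (1) rest on this vanishing, the argument does not go through in general.

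There are two further problems. Condition (1) is a statement about every homogeneous $x\in U_q(\bo)^-$, and this algebra is \emph{not} generated by the Drinfeld generators $x^-_{j,r}$, $r>0$ (the paper stresses this point, and for that reason passes to the opposite Borel via $\hat{\omega}$, where the elements corresponding to $U_q(\bo)^-$ are the root vectors $F_{-\alpha+r\delta}$, handled through Damiani's coproduct formula); your induction treats only the simple-root generators and says nothing about $E_{r\delta-\alpha}$ for non-simple $\alpha$. Moreover, finite-dimensionality of the graded pieces, and the exact form of (1)--(2) (containment in the single component $(R_{i,a}^+)_{m-r}$, which an associated graded of a filtration does not automatically provide), are only gestured at; in the paper these require the bound $\alpha(\alpha_i^\vee)\leq mN_i$ on the weights occurring in degree $m$ together with Lemma \ref{nombre}. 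By contrast, the paper's proof constructs the grading as the eigenspace decomposition of an explicit operator $\Phi$, obtained by comparing $\overline{L}_{i,1}^+$ with $\overline{L}_{i,q_i^2}^+$ inside $\overline{L}_{i,1}^+\otimes\tilde{L}(Y_{i,q_i^{-1}})$ using Theorem \ref{formuachar}, and this is what makes the exact degree shifts, the nilpotency bounds in (2), and the finite-dimensionality provable.
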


This result will be proved in this section.

\begin{rem}\label{nilph} The condition (2) for $r = 0$ implies that
  the grading is compatible with weight decomposition:
$$(R_{i,a}^+)_\lambda = \bigoplus_{m\in\ZZ}(R_{i,a}^+)_\lambda\cap
(R_{i,a}^+)_m\text{ for $\lambda\in \tb^*$.}$$
It also implies that for $j\in I$, $r > 0$ we have
$$(h_{j,r} - \lambda_{j,r})(R_{i,a}^+)_{m}  \subset
(R_{i,a}^+)_{m-r}\text{ where }\lambda_{j,r} = \frac{\delta_{i,j}
  a^r}{r(q_i - q_i^{-1})}.$$
Up to a shift, we can assume that an $\ell$-lowest weight vector of
$R_{i,a}^+$ has degree $0$ and that $(R_{i,a}^+)_m = 0$ for $m < 0$.
\end{rem}

\subsection{Root vectors}\label{roots} 

Let us remind results from \cite{bec, da} where the root vectors
$E_\alpha\in U_q(\bo)$, $F_\alpha\in U_{q}(\bo^-)$ are
constructed for
$$\alpha\in \Phi_+^{Re} = \Phi_0^+\sqcup\{\beta +m\delta|m > 0, \beta \in \Phi_0\}.$$
Here $\Phi_0$ (resp. $\Phi_0^+$) is the set of roots (resp. positive
roots) of $\dot{\mathfrak{g}}$ and $\delta$ is the standard imaginary
root of $\mathfrak{g}$.
For example, we have for $i\in I$, $m \geq 0$ and $r > 0$:
$$E_{m\delta+\alpha_i} = x_{i,m}^+ ,\quad E_{r\delta-\alpha_i} =
-k_i^{-1} x_{i,r}^- , \quad F_{m\delta+\alpha_i} = x_{i,-m}^-, \quad
F_{r\delta-\alpha_i} = -x_{i,-r}^+k_i.$$
We will consider the subalgebras 
$$U_q(\bo^-)^{+,0} = \tb \otimes U_q(\bo^-)^+ \subset U_q(\bo^-)^{\geq
  0} = U_q(\bo^-)^0 \otimes U_q(\bo^-)^+,$$
$$U_q(\bo)^{-,0} =  U_q(\bo)^- \otimes \tb \subset U_q(\bo)^{\leq 0} =
U_q(\bo)^-\otimes U_q(\bo)^0.$$
The $\ZZ$-grading of $U_q(\g)$ induces $\ZZ$-gradings on
$U_q(\bo^-)^{+,0}$ and $U_q(\bo)^{-,0}$.

The subalgebra of $U_q(\bo)$ (resp. $U_q(\bo^-)$) generated by the
$E_{-\alpha + r \delta}$ (resp. $F_{-\alpha + r\delta}$) for
$\alpha\in \Phi_0^+$, $r> 0$ and by $\tb$ is $U_q(\bo)^{-,0}$
(resp. $U_q(\bo^-)^{+,0}$). Note that we have
$$\on{deg}(E_{-\alpha + r \delta}) = - \on{deg}(F_{-\alpha + r \delta}) = r.$$

\subsection{Examples for the grading}\label{simpleun} The examples
in this section are given as an illustration and are not used for the
main results of the paper.

First, for $\gb = sl_2$, we can check directly that we can choose 
$$(R_{1,a}^+)_m = \CC . v_m^*,\quad m\geq 0.$$
Indeed,

(1) for $m\geq 0$, we have $x_{1,1}^-((R_{1,a}^+)_{m}) =
(R_{1,a}^+)_{m-1}$ and $x_{1,r}^-((R_{1,a}^+)_{m}) = \{0\}$ if $r >
1$.

(2) For $r\geq 0$, $(\phi_{1,r}^+ + a\phi_{1,r-1}^+ + \cdots + a^r
\phi_{1,0}^+) =a^{r-1}(\delta_{r\neq 0}\phi_{1,1}^+ + a \phi_{1,0}^+)
=  \delta_{r,0} k_1$ on $R_{1,a}^+$.

(3) We have $x_{1,0}^+((R_{1,a})_m) = (R_{1,a})_{m+1}$ and
$x_{1,r}^+((R_{1,a})_m) = \{0\}$ for $r > 0$.

\noindent This can be generalized to the case $N_i = 1$. In this
special case, there is a simple proof thanks to the following result.

\begin{thm}\label{asympt}\cite{HJ} Suppose $N_i = 1$. Let $j\in I$, $r
  > 0$, $\alpha \in \Phi_0^+$.

  (1) $\phi_{j,\delta_{i,j} + r}^+$ acts by $0$ and
  $k_i^{-1}\phi_{i,1}^+$ is a scalar operator on $R_{i,a}^+$.

(2) $x_{j,r}^+$ acts by $0$ on $R_{i,a}^+$. 

(3) If $\alpha(\alpha_i^\vee) = 0$, then $E_{-\alpha + r\delta}$ acts
by $0$ on $R_{i,a}^+$.

(4) If $\alpha(\alpha_i^\vee) = 1$, then $E_{-\alpha + (r + 1)\delta}$
acts by $0$ on $R_{i,a}^+$.
\end{thm}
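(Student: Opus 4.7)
The strategy is to construct the grading inductively from an $\ell$-lowest weight vector $v_0 \in R_{i,a}^+$ (declared to have degree $0$), exploiting the fact that $R_{i,a}^+ = U_q(\bo)^+ \cdot v_0$ by the triangular decomposition and the lowest-weight property of $v_0$.

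First I would introduce a modified $\mathbb{Z}$-grading on $U_q(\bo)$ by the assignments
\[
\deg(x_{j,m}^+) = -m + \delta_{i,j}, \qquad \deg(x_{j,m}^-) = -m \ (m>0), \qquad \deg(\phi^+_{j,r}) = -r + \delta_{i,j} \ (r>0),
\]
with $\deg(k_j)=0$. The Drinfeld quadratic relations shift the $\mathbb{Z}$-index uniformly on both sides, the Serre relations involve a fixed multiplicity of each node, and the mixed commutator $[x_{j,m}^+, x_{k,l}^-] = \delta_{jk}\phi^+_{j,m+l}/(q_j-q_j^{-1})$ matches degrees $(-m+\delta_{ij})+(-l)=-(m+l)+\delta_{ij}$. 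Thus the modified grading is consistent with all Drinfeld relations.

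Second, I would descend this grading to $R_{i,a}^+$ via the surjection $U_q(\bo)^+ \twoheadrightarrow R_{i,a}^+$, $X \mapsto X v_0$. Motivated by condition (2), it is natural to introduce the normalized generators
\[
\tilde\phi^+_{i,r} := \sum_{s=0}^{r} a^{r-s}\phi^+_{i,s},
\]
which are the coefficients of $\phi^+_i(z)/(1-az)$. Since $\phi^+_i(z) v_0 = (1-az)k_i v_0$, one has $\tilde\phi^+_{i,0} v_0 = v_0$ and $\tilde\phi^+_{i,r} v_0 = 0$ for $r>0$; this is precisely the appearance of the factor $(1-az)$ in $\Psib_{i,a}$ reflecting the structure of the grading. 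The key claim is that the left ideal of $U_q(\bo)^+$ annihilating $v_0$ — generated, after commutation through $x^+$'s, by the $x_{j,m}^- v_0=0$ and $\tilde\phi^+_{i,r} v_0=0$ ($r>0$) relations together with the $\phi^+_{j,r} v_0=\delta_{r,0} v_0$ ($j\neq i$) relations — is homogeneous for the modified grading, so that the grading descends to $R_{i,a}^+$.

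Third, with the grading on $R_{i,a}^+$ in place, condition (3) is immediate from $\deg(x_{j,r}^+)=-r+\delta_{i,j}$. For (1) and (2), writing a general homogeneous vector as $v=X v_0$ with $X\in U_q(\bo)^+$, I would commute the operator ($x\in U_q(\bo)^-$ or $\tilde\phi^+_{i,r}$ or $\phi^+_{j,r}$ with $j\neq i$) past $X$ via Drinfeld relations: each commutator produces graded terms of the expected lower degree, and boundary terms reaching $v_0$ are evaluated to scalars using the $\phi^+$-eigenvalues on $v_0$ (with the crucial vanishing $\tilde\phi^+_{i,r} v_0 = 0$ for $r>0$ producing the shifted combination in (2)).

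The main obstacle is the second step: proving that the annihilator ideal of $v_0$ is homogeneous for the modified grading. This requires carefully matching the cancellation encoded in $\tilde\phi^+_{i,r}$ against the mixed Drinfeld commutators, which typically produce expressions mingling $\phi^+_{j,s}$- and $x_{k,l}^+$-terms with various $m,s$. The natural technical framework is the PBW-type decomposition of $U_q(\bo)$ via the root vectors $E_{-\alpha+r\delta}$ set up in Section \ref{roots}: each root vector inherits a well-defined modified degree, and the commutators between distinct root subspaces split cleanly along the grading once the vanishing relations on $v_0$ are accounted for.
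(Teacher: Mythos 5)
Your argument, even if every step were completed, would not prove Theorem \ref{asympt}, and it cannot be repaired in this form because it never uses the hypothesis $N_i=1$. What your three steps aim at is a $\ZZ$-grading of $R^+_{i,a}$ compatible with prescribed degrees of the Drinfeld generators --- the conditions you verify in your last step are in fact those of the grading Theorem \ref{defigrad}, not the statements (1)--(4) of Theorem \ref{asympt}. But a grading of this kind exists for \emph{every} node $i$ (that is exactly Theorem \ref{defigrad}), while the conclusions of Theorem \ref{asympt} are strictly stronger and are \emph{false} whenever $N_i>1$: see the $B_2$ example following Corollary \ref{corasy}, where $i=1$, $(x^+_{1,0})^2x^+_{2,0}\cdot v=0$ and the weight space of weight $2\alpha_1+\alpha_2$ meets two graded components. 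The point is that an operator of fixed negative degree acting on an infinite-dimensional module with unbounded grading is in no way forced to vanish; degree bookkeeping alone can never yield ``$x^+_{j,r}$ acts by $0$ for all $r>0$'' or the vanishing of the root vectors $E_{-\alpha+r\delta}$. The actual route is completely different: Theorem \ref{asympt} is quoted from \cite[Section 7.2]{HJ}, where, precisely because $N_i=1$, $L^+_{i,a}$ is obtained by an asymptotic construction as a limit of Kirillov--Reshetikhin modules $W^{(i)}_{k,\cdot}$ as $k\to\infty$, and the vanishing statements are read off from that limit; the version for $R^+_{i,a}$ then follows by twisting by $\hat{\omega}$ and dualizing, as the Remark after the theorem records (and, as Remark (iii) after Theorem \ref{formuachar} stresses, this asymptotic construction is only valid when $N_i=1$).

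There are also concrete defects inside the proposed mechanism. The assignment $\deg(x^+_{j,m})=-m+\delta_{i,j}$, $\deg(\phi^+_{j,r})=-r+\delta_{i,j}$, $\deg(k_j)=0$ is not consistent with the Drinfeld relations: the $\phi^+$--$x^+$ relation at $r=0$ couples $\phi^+_{i,1}x^+_{i,m}$ with $k_i x^+_{i,m+1}$, and since $\phi^+_{i,0}=k_i$ must have degree $0$ rather than $1$, this relation mixes your degrees $-m+1$ and $-m$. This boundary defect is exactly why Theorem \ref{defigrad} only asserts $x^+_{j,r}\bigl((R^+_{i,a})_m\bigr)\subset (R^+_{i,a})_{m-r}+(R^+_{i,a})_{m-r+\delta_{i,j}}$ and works with the shifted combinations $\phi^+_{i,r}+a\phi^+_{i,r-1}+\cdots+a^r\phi^+_{i,0}$, and why its proof does not proceed by homogeneity of a defining ideal but by constructing the operator $\Phi$ from $\overline{L}^+_{i,1}\otimes\tilde{L}(Y_{i,q_i^{-1}})$ and decomposing into its eigenspaces. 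Finally, your key claim presumes that the annihilator of $v_0$ is generated by the listed relations on $v_0$; but $U_q(\bo)^-$ is not generated by Drinfeld generators (this is precisely why Section \ref{gradation} passes to the opposite Borel via $\hat{\omega}$, where $U_q(\bo^-)^-$ is), so that description of the annihilator is itself unjustified.
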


\begin{rem} Precisely, the statement is proved in \cite[Section
  7.2]{HJ} for $L_{i,a}^+$ by giving an asymptotic construction of
  $L_{i,a}^+$. The same construction works for
  $\overline{L}_{i,a}^+$. By using $\hat{\omega}$, this gives also an
  asymptotic construction of $R_{i,a}^+$. Hence the result.\end{rem}

\begin{cor}\label{corasy} If $N_i = 1$, there is $p\in\ZZ$ such that for any $m\in\ZZ$,
$$(R_{i,a}^+)_m = \bigoplus_{\{\alpha\in Q^+|\alpha(\alpha_i^\vee) = m
  + p\}} (R_{i,a}^+)_{\alpha}.$$
\end{cor}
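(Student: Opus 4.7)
Plan of proof.  The strategy is to use Theorem \ref{asympt} to show that under $N_i=1$ the action of $U_q(\bo)$ on $R_{i,a}^+$ is so rigid that the grading of Theorem \ref{defigrad} is forced to coincide, up to a uniform shift $p$, with the weight-determined candidate grading.

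First I would show that most Drinfeld generators vanish on $R_{i,a}^+$.  By Theorem \ref{asympt}(2), every $x_{j,r}^+$ with $r>0$ acts as $0$, so $U_q(\bo)^+$ acts through the $x_{k,0}^+$, $k\in I$.  Theorem \ref{asympt}(1) gives $\phi_i^+(z)=k_i(1-az)$ and $\phi_j^+(z)=k_j$ for $j\neq i$, so in particular $\phi_{j,r}^+ \equiv 0$ on $R_{i,a}^+$ for every pair $(j,r)$ with $r>0$ and $(j,r)\neq(i,1)$.  Feeding this into the Drinfeld--Jimbo relation $[x_{j,0}^+,x_{i,r}^-]=\delta_{i,j}\phi_{i,r}^+/(q_i-q_i^{-1})$, combined with $x_{j,r}^-v^*=0$ for $r>0$ and an induction on word length in the surviving generators $x_{k,0}^+$, one concludes that every $x_{j,r}^-$ with $r>0$ acts by zero on the whole of $R_{i,a}^+$ except $x_{i,1}^-$ itself.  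Hence on $R_{i,a}^+$ the only nontrivially acting Drinfeld generators are $x_{i,1}^-$ (of $\mathbb Z$-degree $1$) and the $x_{k,0}^+$ (of degree $0$).

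Next, I would check that the weight-determined candidate grading
\[
(R_{i,a}^+)'_m := \bigoplus_{\alpha\in Q^+,\,\alpha(\alpha_i^\vee)=m+p}(R_{i,a}^+)_\alpha
\]
satisfies the three axioms of Theorem \ref{defigrad}.  Axiom~(1) holds because on $R_{i,a}^+$ the space $U_q(\bo)^-$ is spanned by powers of $x_{i,1}^-$, each of degree $1$ and of weight $-\alpha_i$, so it shifts $\alpha(\alpha_i^\vee)$ exactly by $-1$ per application.  Axiom~(2) is trivially satisfied, since $\phi_i^+(z)=k_i(1-az)$ makes the combination $\phi_{i,r}^++a\phi_{i,r-1}^++\cdots+a^r\phi_{i,0}^+$ identically zero for every $r\geq 1$ (and for $j\neq i$, $\phi_{j,r}^+$ already vanishes for $r>0$).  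Axiom~(3) follows because $x_{k,0}^+$ shifts the $\alpha_i$-component of the weight by $\delta_{i,k}$.

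The remaining step, which is the main obstacle, is to identify the grading produced by Theorem \ref{defigrad} with the candidate grading above.  Normalizing so that $v^*$ lies in grade $p$, one argues by induction on $\alpha(\alpha_i^\vee)$ that each weight-$\alpha$ space sits entirely in grade $p+\alpha(\alpha_i^\vee)$.  The base case $\alpha(\alpha_i^\vee)=0$ is immediate because that subspace is generated from $v^*$ using only the grade-preserving generators $x_{k,0}^+$, $k\neq i$ (axiom (3)).  The inductive step requires ruling out a grade-preserving component of $x_{i,0}^+$ on a nonzero vector (which axiom (3) would \emph{a priori} permit).  This is handled by exploiting the commutator
\[
[x_{i,0}^+,x_{i,1}^-]=\phi_{i,1}^+/(q_i-q_i^{-1})=-ak_i/(q_i-q_i^{-1}),
\]
which acts as a nonzero scalar on every weight space: decomposing this identity into graded components and using axiom (1) (that $x_{i,1}^-$ strictly lowers the grade by $1$), one propagates the strict grade-raising property of $x_{i,0}^+$ from the $\alpha(\alpha_i^\vee)=0$ subspace to all weights.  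Summing the resulting weight decomposition over $\alpha$ yields the corollary, with $p$ being the normalized grade of $v^*$.
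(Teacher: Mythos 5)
There is a genuine gap, and it sits exactly at the two places where you pass from the Drinfeld generators you control to the whole Borel algebra. First, your claim that on $R_{i,a}^+$ ``the space $U_q(\bo)^-$ is spanned by powers of $x_{i,1}^-$'' does not follow from showing that the generators $x_{j,r}^-$ with $r>0$, $(j,r)\neq (i,1)$ act by zero: unlike $U_q(\bo^-)^-$, the subalgebra $U_q(\bo)^-$ is \emph{not} generated by Drinfeld generators (the paper points this out at the beginning of the proof of Theorem \ref{defigrad}); it contains $e_0$ and, more generally, the root vectors $E_{-\alpha+r\delta}$ for non-simple $\alpha\in\Phi_0^+$, whose action on $R_{i,a}^+$ is not reached by your word-length induction. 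Controlling precisely these operators is the content of Theorem \ref{asympt}(3)--(4), which you never invoke, and which is where the hypothesis $N_i=1$ actually does its work. (Your use of Theorem \ref{asympt}(1)--(2), the identity $\phi_i^+(z)=k_i(1-az)$, and the fact that $R_{i,a}^+$ is spanned by words in the $x_{k,0}^+$ applied to $v^*$ are fine.)

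Second, and more seriously, the ``propagation'' step does not close. Writing the action of $x_{i,0}^+$ as $A_0+A_1$ with $A_0$ grade-preserving and $A_1$ grade-raising, the graded decomposition of $[x_{i,0}^+,x_{i,1}^-]=-ak_i/(q_i-q_i^{-1})$ only gives $[A_0,x_{i,1}^-]=0$ (the nonzero scalar lands in the other graded component), so with your induction the unwanted component $A_0w$ is merely shown to be annihilated by $x_{i,1}^-$. But $\ker x_{i,1}^-$ is large in higher rank: already for $\gb=sl_3$ and $i=1$ the weight-$(\alpha_1+\alpha_2)$ space of $R_{1,a}^+$ is killed by $x_{1,1}^-$, because the weight-$\alpha_2$ space is zero; hence ``killed by $x_{i,1}^-$'' does not imply ``zero'', and nothing in your argument excludes a nonzero wrong-grade component lying in this kernel. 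To rule it out one needs degree-controlled lowering operators in \emph{all} root directions, which is exactly how the paper argues: any nonzero weight vector of $(R_{i,a}^+)_m$ is mapped to a nonzero lowest-weight vector by a monomial in the root vectors $E_{-\alpha+r\delta}$ (Section \ref{roots} together with simplicity), Theorem \ref{asympt}(3)--(4) force each factor to have $r=1$ and $\alpha$ containing $\alpha_i$ exactly once, and property (1) of Theorem \ref{defigrad} then yields $m=p+\alpha(\alpha_i^\vee)$ by comparing degrees. Note also that your verification that the weight-defined candidate grading satisfies axioms (1)--(3) is not needed for the corollary (Theorem \ref{defigrad} is only an existence statement), so the entire burden falls on the identification step, which is precisely where the gap lies.
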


\begin{proof} Let $p$ be the degree of a lowest weight vector.  Let
  $w\in (R_{i,a}^+)_{\alpha}$ be a non zero weight vector.  Then by
  Section \ref{roots} there is a non zero lowest weight vector of the
  form $E_{-\alpha_{i_1}+r_1\delta}\cdots E_{-\alpha_{i_N} +
    r_N\delta}w$.  Hence $m = r_1 + \cdots + r_N + p$ and $\alpha =
  \alpha_{i_1} + \cdots + \alpha_{i_N}$. But by Theorem \ref{asympt},
  $r_1 = \cdots = r_N = 1$ and $\alpha_{i_1}(\alpha_i^\vee) = \cdots =
  \alpha_{i_N}(\alpha_i^\vee) = 1$. So $m = N + p =
  \alpha(\alpha_i^\vee) + p$.
\end{proof}

In general, the statements of Theorem \ref{asympt} and Corollary
\ref{corasy} do not hold (see the following example).  The reason is
that the asymptotic construction in \cite{HJ} do not work in these
cases.  That is why we give a completely different proof in the next
subsections.

\begin{example} Let us consider the $B_2$-case with $i = 1$ the node
  satisfying $N_1 = 2$.  Let $v$ be a lowest weight vector of $V =
  R_{1,1}^+$. By \cite[Theorem 6.4]{HJ}, we have
  $\text{dim}(V_{2\alpha_1 + \alpha_2}) = 3$.  If the statement of
  theorem \ref{asympt} held for $V$, we would have
$$V_{2\alpha_1 + \alpha_2} = \CC (x_{1,0}^+)^2x_{2,0}^+.v \oplus \CC
x_{1,0}^+x_{2,0}^+x_{1,0}^+.v \oplus \CC x_{2,0}^+ (x_{1,0}^+)^2.v.$$
But $(x_{1,0}^+)^2x_{2,0}^+.v = 0$, contradiction.  The statement of
Corollary \ref{corasy} does not hold neither. Consider a grading
such that $v$ has degree $0$.  The weight spaces of weight $0$,
$\alpha_1$, $2\alpha_1$, $\alpha_1 + \alpha_2$ are of dimension $1$,
generated respectively by $v$, $x_{1,0}^+.v$, $(x_{1,0}^+)^2.v$,
$(x_{2,0}^+x_{1,0}^+).v$, and have respective degree $0$, $1$, $2$,
$1$.  $V_{2\alpha_1 + \alpha_2}$ is generated by $v_1 =
(x_{1,0}^+x_{2,0}^+x_{1,0}^+).v$, $v_2 = x_{2,0}^+(x_{1,0}^+)^2.v$ and
$v_3 = x_{2,1}^+(x_{1,0}^+)^2.v = - x_{1,1}^+x_{2,0}^+x_{1,0}^+.v$. By
construction, $v_3$ has degree $1$ and $v_2$ has degree $2$.  Since
$x_{1,1}^-(v_1 + \CC v_2 + \CC v_3)\subset (V)_1$, there are $\lambda,
\mu\in\CC$ such that $v_1 + \lambda v_2 + \mu v_3$ has degree $2$.
Hence $V_{2\alpha_1 + \alpha_2}\cap (V)_1$ has dimension $1$ and
$V_{2\alpha_1 + \alpha_2}\cap (V)_2$ has dimension $2$.
\end{example}

\subsection{Coproduct and root vectors}
Let $\alpha\in \Phi_0^+$ and $r > 0$. Set $k_\alpha = \prod_{1\leq
  i\leq n}k_i^{\alpha(\omega_i^\vee)}$ We have \cite[Theorem 4,
(3)]{da}:
\begin{equation}\label{copro}\Delta(F_{-\alpha + r\delta}) \in
  F_{-\alpha + r\delta}\otimes k_\alpha + \sum_{\beta \in \Phi_0^+, p > 0} U_q(\bo^-)\otimes
  (U_q(\bo^-)F_{-\beta + p\delta}).\end{equation} 
This gives the factor $k_\alpha$ in the decomposition of $F_{-\alpha +
  r\delta}$ in $U_q(\bo^-)^+ \otimes \mathfrak{t}$: 
\begin{equation}\label{factor}F_{-\alpha + r\delta} \in U_q(\bo^-)^+
  k_\alpha\text{ and }E_{-\alpha + r\delta} \in U_q(\bo)^-
  k_\alpha^{-1}.\end{equation}
This last point also follows from \cite[Proposition 9.3]{da2}. Now let
$i\in I$ and $r > 0$. The $Q$-grading of $U_q(\Glie)$ induces
a $Q$-grading of $U_q(\bo^-)$.  Let $U_q(\bo^-)_+$
(resp. $U_q(\bo^-)_-$) be the subalgebra of $U_q(\bo^-)$ of elements
of positive (resp. negative) $Q$-degree. Then we have
\begin{equation}\label{copro2}\Delta(h_{i,-r}) \in h_{i,-r} \otimes 1 + 1\otimes h_{i,-r}
 + (U_q(\bo^-))_-\otimes (U_q(\bo^-))_+.\end{equation}

\subsection{Drinfeld relations}

Let us give examples of relations between Drinfeld generators that we
will use:
\begin{equation}\label{drinfeldun}[x_{j,-m}^+,x_{i,0}^-] =
  \delta_{i,j}\frac{\delta_{m,0}k_i - \phi_{i,-m}^-}{q_i -
    q_i^{-1}}\text{ for $m\in\ZZ$,  $i,j\in I$,}\end{equation}
and for $m\geq 0$, $p\in\ZZ$, $i,j\in I$:
\begin{equation}\label{drinfelddeux}\phi_{i,-m}^- x_{j,p}^+ 
= -\sum_{0\leq l\leq m - 1}
q_i^{- lC_{i,j}}x_{j,p-l-1}^+ \phi_{i,-m+l+1}^-
+ \sum_{0\leq l\leq m}q_i^{- (l+1) C_{i,j}} x_{j,p-l}^+\phi_{i,-m+l}^-.\end{equation}
We will also use the following technical result.

\begin{lem}\label{decompxi} Let $i\in I$, $\alpha\in\Phi_0^+$, $r >
  0$. Then we have a decomposition in $U_q(\bo^-)$
$$x_{i,0}^-F_{-\alpha + r\delta} = q^{(\alpha,\alpha_i)}F_{-\alpha +
  r\delta}x_{i,0}^- + \sum_{-r\leq p\leq 0} a_p (\phi_{i,p}^- +
\phi_{i,p+1}^- + \cdots + \phi_{i,0}^-)k_\alpha + a k_i k_\alpha$$
where $a \in U_q(\mathfrak{b}^-)^{ > 0}$ has $\ZZ$-degree $-r$ and
$a_p\in U_q(\mathfrak{b}^-)^{ > 0}$ is a sum of elements of
$\ZZ$-degree $-r-p$ or $-r-p+1$.
\end{lem}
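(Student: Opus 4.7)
My plan is to prove Lemma \ref{decompxi} by induction on the height of the positive root $\alpha \in \Phi_0^+$, exploiting the recursive Beck/Damiani construction of real affine root vectors together with the Drinfeld-type commutation relations (\ref{drinfeldun}) and (\ref{drinfelddeux}).

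For the base case $\alpha = \alpha_j$, I split into two subcases. If $j \neq i$, relation (\ref{drinfeldun}) gives $[x_{i,0}^-, x_{j,-r}^+] = 0$, so $x_{i,0}^- F_{-\alpha_j + r\delta} = -x_{i,0}^- x_{j,-r}^+ k_j = -x_{j,-r}^+ x_{i,0}^- k_j$, and pushing $x_{i,0}^-$ past $k_j$ produces the scalar $q_j^{C_{j,i}} = q^{(\alpha_j,\alpha_i)}$, yielding the identity with no correction terms. If $j = i$, the same relation gives $x_{i,0}^- x_{i,-r}^+ = x_{i,-r}^+ x_{i,0}^- + \phi_{i,-r}^-/(q_i - q_i^{-1})$ (using $r > 0$), and after pushing $k_i$ through $x_{i,0}^-$ one picks up $q_i^2 = q^{(\alpha_i,\alpha_i)}$ along with a residual Cartan term $-\phi_{i,-r}^- k_i/(q_i - q_i^{-1})$; this residual fits into the prescribed telescoping shape by setting $a_{-r}$ equal to the appropriate scalar and $a_p = 0$ for $p > -r$, with $a = 0$.

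For the inductive step, I would use the fact that for $\alpha \in \Phi_0^+$ of height at least two there is a decomposition $\alpha = \beta + \gamma$ with $\beta,\gamma \in \Phi_0^+$ of smaller height and integers $r_1 + r_2 = r$ such that $F_{-\alpha + r\delta}$ is proportional to a $q$-bracket $[F_{-\beta + r_1\delta}, F_{-\gamma + r_2\delta}]_{q^t}$ for an appropriate exponent $t$, as in the convex PBW basis construction. Expanding $x_{i,0}^- F_{-\alpha + r\delta}$ through this bracket and applying the inductive hypothesis to both $x_{i,0}^- F_{-\beta + r_1 \delta}$ and $x_{i,0}^- F_{-\gamma + r_2 \delta}$ produces the main term $q^{(\alpha,\alpha_i)} F_{-\alpha + r\delta} x_{i,0}^-$ via the additivity $(\alpha, \alpha_i) = (\beta, \alpha_i) + (\gamma, \alpha_i)$, together with correction terms of the shape (inductive correction for $\beta$)$\cdot F_{-\gamma + r_2\delta}$ and its mirror. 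To move the $\phi_{i,p}^-$ factors appearing in these corrections past the remaining root vector $F_{-\gamma + r_2\delta}$, I invoke relation (\ref{drinfelddeux}), whose essential feature is that it already produces partial sums of the exact telescoping shape $\phi_{i,p'}^- + \cdots + \phi_{i,q'}^-$ prescribed by the lemma, while the $\ZZ$-degree shifts of $-l$ or $-l-1$ in (\ref{drinfelddeux}) give precisely the two possible $\ZZ$-degrees "$-r-p$ or $-r-p+1$" required for the $a_p$. The Cartan tail $k_\alpha = k_\beta k_\gamma$ is tracked using (\ref{factor}).

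The main obstacle will be the combinatorial bookkeeping: verifying that after all applications of (\ref{drinfelddeux}) and the inductive hypothesis, the correction terms reassemble into the exact telescoping form $\sum_{-r \leq p \leq 0} a_p(\phi_{i,p}^- + \cdots + \phi_{i,0}^-) k_\alpha$ with $a_p \in U_q(\mathfrak{b}^-)^{> 0}$ of the asserted $\ZZ$-degrees, and that the purely Cartan residues, produced by the $\delta_{m,0} k_i$ branch of (\ref{drinfeldun}) when applicable during the induction, consolidate into the single term $a k_i k_\alpha$ with $a$ of $\ZZ$-degree $-r$. A secondary technical issue is to ensure that moving $x_{i,0}^-$ past the Cartan factor $k_\beta$ in $F_{-\beta + r_1\delta} \in U_q(\mathfrak{b}^-)^+ k_\beta$ does not introduce scalar ambiguities incompatible with the recombination $k_\beta k_\gamma = k_\alpha$; this is handled by carefully applying the inductive hypothesis to the $U_q(\mathfrak{b}^-)^+$ part of each root vector separately before reassembling the Cartan pieces at the end.
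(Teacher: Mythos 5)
Your inductive scheme rests on an assertion that is not available and is, as stated, false in general: that for every $\alpha\in\Phi_0^+$ of height $\geq 2$ one can write $\alpha=\beta+\gamma$ and $r=r_1+r_2$ so that $F_{-\alpha+r\delta}$ is proportional to a single $q$-bracket $[F_{-\beta+r_1\delta},F_{-\gamma+r_2\delta}]_{q^t}$. Already for $r=1$ there is no decomposition with both $r_1,r_2>0$, so at least one factor would have to be a root vector of a different nature (a finite negative root vector or a vector outside $U_q(\bo^-)$), and in general the Levendorski\u{\i}--Soibelman straightening of a bracket of two real root vectors yields, besides the root vector of the sum, extra ordered monomials in intermediate root vectors (possibly imaginary ones), with coefficients depending on the chosen convex order. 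So the recursion that drives your induction would itself require a substantial proof, and the ``combinatorial bookkeeping'' you defer is exactly where the content of the lemma lies; in particular you never explain why, at the end of the induction, the coefficients $a_p$ and $a$ lie in $U_q(\mathfrak{b}^-)^{>0}$. Two smaller points: relation (\ref{drinfelddeux}) does \emph{not} produce partial sums of the telescoping shape $\phi_{i,p}^-+\cdots+\phi_{i,0}^-$ (the coefficients $q_i^{-lC_{i,j}}$ vary with $l$); the telescoped form in the statement is nothing more than the resummation $a_p=b_p-b_{p-1}$ of a general expression $\sum_m b_m\phi_{i,m}^-$ with $b_m$ of $\ZZ$-degree $-r-m$, so it cannot serve as evidence that your corrections reassemble correctly. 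Also, in your base case $j=i$ the residual term $c\,\phi_{i,-r}^-k_i$ is not obtained by taking $a_{-r}=c$ and $a_p=0$ for $p>-r$; you need $a_{-r}=c$ and $a_{-r+1}=-c$.

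For comparison, the paper's proof requires no induction on the height of $\alpha$ and no bracket recursion. By (\ref{factor}) one writes $F_{-\alpha+r\delta}=x^+k_\alpha$ with $x^+$ of $\ZZ$-degree $-r$, an algebraic combination of the $x_{j,-m}^+$ with $0\leq m\leq r$; commuting $x_{i,0}^-$ through $x^+$ via (\ref{drinfeldun}) produces $k_i$ and $\phi_{i,m}^-$ terms, which are then pushed to the right via (\ref{drinfelddeux}). The subtle step, which your approach has no counterpart for, is that the resulting coefficients a priori lie in $\CC[x_{j,p}^+]_{j\in I,\,0\leq p\leq r}\subset U_q(\g)^+$, which is \emph{not} contained in $U_q(\bo^-)$ (it involves the $e_j$); since the product $F_{-\alpha+r\delta}x_{i,0}^-$ does lie in $U_q(\bo^-)$, one compares with the unique triangular decomposition (\ref{triandecomp2}) inside (\ref{fdecomp}) to conclude that the coefficients actually belong to $U_q(\bo^-)^+$, and only then performs the Abel resummation $a_p=b_p-b_{p-1}$. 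If you want to salvage an inductive argument, you would have to both establish a precise recursive description of the vectors $F_{-\alpha+r\delta}$ (with control of the LS correction terms) and supply an argument replacing this uniqueness-of-decomposition step; as written, the proposal has a genuine gap.
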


\begin{proof} We will compute the decomposition of $F_{-\alpha +
    r\delta}x_{i,0}^-$ in (\ref{triandecomp2}) by using the full
  quantum loop algebra $U_q(\g)$ and the decomposition
  (\ref{fdecomp}).  Indeed, by (\ref{factor}), $F_{-\alpha + r\delta}$
  is a product $x^+ k_\alpha$ where $x^+$ has degree $-r$ and is an
  algebraic combination of the $x_{j,-m}^+$ ($j\in I$, $0\leq m\leq
  r$).  We first have $F_{-\alpha + r\delta}x_{i,0}^- =
  q^{-\lambda}x^+x_{i,0}^-k_\alpha$ where
$$\lambda = \sum_{j\in I}\alpha(\omega_j^\vee)d_j C_{j,i} = \sum_{j\in
  I} \alpha(\omega_j^\vee) (\alpha_i,\alpha_j) = (\alpha,\alpha_i).$$
The relations (\ref{drinfeldun}) imply
$$[x^+,x_{i,0}^-]\in \mathcal{A} k_i \mathcal{A} + \sum_{-r\leq m\leq
  0}  \mathcal{A} \phi_{i,m}^- \mathcal{A} .$$ 
where $\mathcal{A} = \CC[x_{j,p}^+]_{j\in I, 0\leq p\leq r}$. Then the
relations (\ref{drinfelddeux}) imply
$$[x^+,x_{i,0}^-]\in \mathcal{A} k_i + \sum_{-r\leq m\leq 0}  \mathcal{A} \phi_{i,m}^-.$$
Hence
\begin{equation}\label{thedecomp}F_{-\alpha + r\delta}x_{i,0}^- =
  q^{-\lambda} x_{i,0}^- F_{-\alpha + r\delta} - \sum_{-r\leq m\leq 0}
  q^{-\lambda}b_m \phi_{i,m}^- k_\alpha- q^{-\lambda} a k_i
  k_\alpha,\end{equation}
where $b_m\in \mathcal{A}$ has $\ZZ$-degree $-r-m$ and $a\in
\mathcal{A} $ has $\ZZ$-degree $r$. Note that $\mathcal{A}$ is not
contained in $U_q(\bo^-)$. But we have $\mathcal{A}\subset
U_q(\g)^+$. So (\ref{thedecomp}) is the decomposition of $F_{-\alpha +
  r\delta}x_{i,0}^-$ in (\ref{fdecomp}). But $F_{-\alpha +
  r\delta}x_{i,0}^-\in U_q(\bo^-)$ and the decomposition in
(\ref{triandecomp2}) is unique. Hence, for degree reason, we have
$a\in U_q(\bo^-)^+$ and $b_m\in U_q(\bo^-)^+$ for $m\leq 0$.  Now
(\ref{thedecomp}) can be rewritten as in the Lemma, with
$$a_p = b_{p} - b_{p-1}\text{ for $-r\leq p\leq 0$}$$
where we set $b_{-r-1} = 0$. 
\end{proof}

\begin{example} For example, in the case $\gb = sl_2$, we have for $r
  > 0$, $F_{r\delta-\alpha_1} = -x_{1,-r}^+k_1$ and
$$x_{1,0}^- F_{r\delta-\alpha_1} = q^2 F_{r\delta-\alpha_1} x_{1,0}^-
+ \phi_{1,-r}^-k_1.$$
\end{example}

\subsection{Tensor product of $\ell$-weight vectors} By using
(\ref{copro2}), we prove exactly as in \cite[Proposition 3.2]{h3} the
following.
\begin{prop}\cite{h3}\label{prodlweight}
  Let $V_1, V_2$ in category $\overline{\mathcal{O}}$ and consider an
  $l$-weight vector
$$w = \left(\sum_{\alpha} w_\alpha\otimes v_\alpha\right) +
\left(\sum_\beta w_\beta'\otimes v_\beta'\right)\in V_1\otimes V_2$$
satisfying the following conditions.

(i) The $v_\alpha$ (resp. $v_{\beta}'$) are $\ell$-weight
(resp. weight) vectors of $\ell$-weight $\Psib_\alpha$ (resp. weight
$\omega_\beta$).

(ii) For any $\beta$, there is an $\alpha$ satisfying $\omega_\beta >
\varpi(\Psib_\alpha)$.

(iii) For any $\alpha$, we have $\sum_{\left\{\alpha'|\omega_{\alpha'}
    = \omega_\alpha \right\}} w_{\alpha'}\otimes v_{\alpha'}\neq 0$.

\noindent Then the $\ell$-weight of $w$ is the product of one
$\Psib_\alpha$ by an $l$-weight of $V_1$.
\end{prop}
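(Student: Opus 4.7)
The strategy is the standard one for propagating $\ell$-weight information across a coproduct, as in \cite[Proposition 3.2]{h3}, adapted to category $\overline{\mathcal{O}}$. The main algebraic input is the refined coproduct formula for the Drinfeld Cartan series: modulo terms in $(U_q(\bo^-))_{-}\otimes (U_q(\bo^-))_{+}$ (i.e.\ terms whose first tensor factor strictly decreases the $V_1$-weight and whose second factor strictly increases the $V_2$-weight), one has
\[
\Delta(\phi_{j,m}^{\pm}) \equiv \sum_{p+q=m} \phi_{j,p}^{\pm}\otimes \phi_{j,q}^{\pm},
\]
which is the analogue of \eqref{copro2} for the generating series $\phi_j^{\pm}(z)$. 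Everything will be controlled by grading the identity $(\phi_{j,m}^+ - \Psi_{j,m}^w)^N w = 0$ (which holds for some $N$ since $w$ is a generalized $\ell$-weight vector with $\ell$-weight $\Psib^w$) according to the weight decomposition on the first tensor factor.

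\textbf{Key steps.} First, regroup the decomposition of $w$ by $V_1$-weight: for each $\gamma\in\tb^*$ let $w(\gamma)$ be the sum of those $w_\alpha\otimes v_\alpha$ with $w_\alpha$ of weight $\gamma$, together with those $w_\beta'\otimes v_\beta'$ with $w_\beta'$ of weight $\gamma$. If $\Omega$ denotes the total weight of $w$, condition (ii) gives that for each primed index $\beta$ the $V_1$-weight $\Omega-\omega_\beta$ is strictly less than $\Omega-\varpi(\Psib_\alpha)$ for some unprimed $\alpha$. Picking $\gamma_0$ maximal among the $V_1$-weights of the unprimed $w_\alpha$, it follows that the contribution $w(\gamma_0)$ consists only of unprimed terms, and by condition (iii) this contribution is nonzero. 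Second, apply $(\phi_{j,m}^+-\Psi_{j,m}^w)^N$ to $w$ using the above coproduct formula and project to the $V_1$-weight component $\gamma_0$. The correction terms from the coproduct lie in $(U_q(\bo^-))_{-}\otimes(U_q(\bo^-))_{+}$, so they strictly lower the $V_1$-weight and drop out of the projection. What survives is the action on $w(\gamma_0)$ through the ``diagonal'' part of $\Delta$, where the second tensor factor acts on the $\ell$-weight vectors $v_\alpha$ by the scalars read off from $\Psib_\alpha$. Expanding in the generating series, the surviving identity becomes
\[
\sum_{\alpha : \omega_{w_\alpha}=\gamma_0}\bigl(\phi_j^+(z)\,\Psi_{\alpha,j}(z)^{-1}\Psi^w_j(z)^{-1}\text{-polynomial}\bigr)^{N}(w_\alpha)\otimes v_\alpha = 0,
\]
i.e., a polynomial identity in the $\phi_{j,m}^+$ acting on $V_1$, twisted by the $\ell$-weights $\Psib_\alpha$.

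\textbf{Conclusion and main obstacle.} Since the $v_\alpha$ of distinct $\ell$-weights are linearly independent, this identity splits into one relation per $\ell$-weight class at height $\gamma_0$: for each class the corresponding sum $\sum_{\alpha'}w_{\alpha'}$ in $V_1$ is (a generalized eigenvector, hence) an $\ell$-weight vector of $\ell$-weight $\Psib^w(\Psib_\alpha)^{-1}$. By condition (iii), at least one such class gives a nonzero element of $V_1$ that is a genuine $\ell$-weight vector, and its $\ell$-weight lies in the spectrum of $V_1$; multiplying back by $\Psib_\alpha$ produces the claimed factorization $\Psib^w=\Psib_\alpha\cdot\Psib_{V_1}$. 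The main technical obstacle is to justify the separation of the coproduct into the ``diagonal'' and the ``strictly off-diagonal'' parts at the level of the \emph{generating series} $\phi_j^+(z)$ (rather than one $m$ at a time), and to verify that the filtration by $V_1$-weight is preserved by all the higher powers $(\phi_{j,m}^+-\Psi_{j,m}^w)^N$; this is where conditions (ii) and (iii) are used in an essential way to guarantee that the maximal $V_1$-weight stratum is both unprimed and nonzero.
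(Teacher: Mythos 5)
Your outline is correct and follows the same route that the paper indicates (it cites \cite[Proposition~3.2]{h3} via the coproduct formula \eqref{copro2}): regroup by $V_1$-weight, identify the maximal stratum, show it is purely unprimed via (ii) and nonzero via (iii), and then project the relation $(\text{Cartan}-\text{scalar})^N w=0$ to that stratum using the $Q$-graded decomposition of the coproduct to isolate the diagonal part. The obstacle you flag at the end is not a real one: the correction terms in \eqref{copro2} lie in $(U_q(\bo^-))_-\otimes (U_q(\bo^-))_+$, so by $Q$-degree bookkeeping every off-diagonal contribution strictly lowers the $V_1$-weight, and this persists under taking products and powers because $(\text{degree }{-\beta_1})\cdot(\text{degree }{-\beta_2})$ has degree ${-\beta_1-\beta_2}$; so the projection to the maximal $V_1$-weight $\gamma_0$ picks out exactly the grouplike part $\phi^-\otimes\phi^-$, both for a single coefficient and for the generating series. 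One correction: since $V_1,V_2$ live in $\overline{\mathcal O}$ (modules over the opposite Borel $U_q(\bo^-)$), the $\ell$-weight condition and the relevant coproduct involve $\phi^-_{j,-m}$ and $h_{j,-r}$, not $\phi^+_{j,m}$; your formulas should be restated for the negative Drinfeld Cartan series, after which everything you wrote goes through verbatim.
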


\subsection{Proof of Theorem \ref{defigrad}}
We can assume $a = 1$. By using the twisting by $\hat{\omega}$, we can
work with $\overline{L}_{i,1}^+$.  It will be important for our proof
as we will use that $U_q(\bo^-)^-$ as, in opposition to $U_q(\bo)^-$,
is generated by a family of Drinfeld generators (see
(\ref{negger})). We do not need such a property for $U_q(\bo^-)^+$ as
we have the coproduct formulas (\ref{copro}). The conditions to be
proved become for $m\geq 0$, $j\in I$, $r\leq 0$:

(1) for $x\in U_q(\bo^-)^+$ of degree $r < 0$,
$x((\overline{L}_{i,1}^+)_{m})\subset (\overline{L}_{i,1}^+)_{m + r}$.

(2) $\phi_{j,r}^- (\overline{L}_{i,1}^+)_{m}\subset
(\overline{L}_{i,1}^+)_{m + r}\text{ if $j\neq i$}$ and $(\phi_{i,r}^-
+ \phi_{i,r+1}^- + \cdots +
\phi_{i,0}^-)(\overline{L}_{i,1}^+)_{m}\subset
(\overline{L}_{i,1}^+)_{m + r}$.

(3) $x_{j,r}^-((\overline{L}_{i,1}^+)_{m})\subset
(\overline{L}_{i,1}^+)_{m + r} + (\overline{L}_{i,1}^+)_{m + r +
  \delta_{i,j}}$.

\noindent Let $v$ (resp. $v'$) be an $\ell$-highest weight vector of
$\overline{L}_{i,1}^+$ (resp. of $\tilde{L}(Y_{i,q_i^{-1}})$).  Let
$$V = U_q(\bo^-).(v\otimes v')\subset \overline{L}_{i,1}^+\otimes \tilde{L}(Y_{i,q_i^{-1}})$$
Then we have a surjective morphism of $U_q(\mathfrak{b}^-)$-modules 
$$\phi : V\rightarrow \overline{L}_{i,q_i^2}^+.$$ 
Let $V' = \overline{L}_{i,1}^+\otimes v'$. From (\ref{copro}), (\ref{copro2}), 
$V'$ is a $U_q(\mathfrak{b}^-)^{\geq 0}$-module and 
$$\chi_q(V') = \chi_q(\overline{L}_{i,q_i^2}^+).$$
Let us prove that $V'\subset V$. Consider an $\ell$-weight vector $w$
of $V$ whose $\ell$-weight is an $\ell$-weight of
$\overline{L}_{i,q_i^2}^+$. If $w$ is not in $V'$, in a decomposition
of $w$ as in Proposition \ref{prodlweight}, we would have some terms
$w_\alpha\otimes v_\alpha$ with $v_\alpha$ $\ell$-weight vector of
$\tilde{L}(Y_{i,q_i^{-1}})$ which is not in $\CC. v'$. But by
\cite[Lemma 6.1, Remark 6.2]{Fre2} (and its proof), we have
$$\tilde{\chi}_q(\tilde{L}(Y_{i,q_i^{-1}}))\in 1 + A_{i,1}^{-1}\ZZ[A_{j,b}^{-1}]_{j\in I, b\in\CC^*}.$$
Hence the $\ell$-weight $\Psib_\alpha$ of $v_\alpha$ would be in
$([\overline{\omega_i}]^{-1}Y_{i,q_i^{-1}})A_{i,1}^{-1}\ZZ[A_{j,b}^{-1}]_{j\in I, b\in\CC^*}$.  Contradiction as
by Theorem \ref{formuachar}, $A_{i,1}^{-1}$ is not a factor of the
$\ell$-weights occurring in $\tilde{\chi}_q(\overline{L}_{i,q_i^2}^+)$. Moreover,
$\overline{L}_{i,1}^+$ and $\overline{L}_{i,q_i^{-2}}^+$ have the same
character, so it implies that $V'\subset V$.

Now we may consider the restriction of $\phi$ to $V'$. From our
discussion, it is an isomorphism of $U_q(\mathfrak{b}^-)^{\geq
  0}$-module. It induces a linear isomorphism
$$\tilde{\phi} : \overline{L}_{i,1}^+\rightarrow \overline{L}_{i,q_i^2}^+.$$ 
Let $\tau : \overline{L}_{i,q_i^2}^+\rightarrow \overline{L}_{i,1}^+$
be the unique linear isomorphism such that $\tau(g.x) =
q_i^{-2m}g.\tau(x)$ for any $x\in \overline{L}_{i,q_i^2}^+$, $g\in
U_q(\mathfrak{b}^-)$ of $\ZZ$-degree $m$ and such that $\Phi(v) = v$
where
$$\Phi = \tau\circ \tilde{\phi} : \overline{L}_{i,1}^+\rightarrow \overline{L}_{i,1}^+.$$ 
We have for $j\in I$, $r > 0$ and $\alpha\in\phi_0^+$
$$\tilde{\phi} F_{-\alpha + r\delta} = F_{-\alpha + r\delta}\tilde{\phi}\quad,\quad
\tilde{\phi} \phi_j^-(z) = \phi_j^-(z)\left(\frac{1 - z^{-1} }{1 -
    z^{-1}q_i^2}\right)^{\delta_{i,j}} \tilde{\phi}.$$ Hence $\Phi$ is
a linear automorphism of $\overline{L}_{i,1}^+$ which commutes with
the $k_j$ and
$$\Phi F_{-\alpha + r\delta} = q_i^{2r} F_{-\alpha + r\delta}\Phi\quad ,\quad
\Phi \phi_j^-(z)(1- z^{-1})^{-\delta_{i,j}} = \phi_j^-(q_i^{-2} z)(1 -
(z q_i^{-2})^{-1})^{-\delta_{i,j}} \Phi.$$ For $i = j$, the last
equation can be rewritten as $(r \geq 0)$:
$$\Phi (\phi_{i,-r}^- + \phi_{i,-r+1}^- + \cdots + \phi_{i,0}^-) =
q_i^{2r}(\phi_{i,-r}^- + \phi_{i,-r+1}^- + \cdots +
\phi_{i,0}^-)\Phi.$$
The weight spaces of $\overline{L}_{i,1}^+$ are stable by $\Phi$. Let
us prove by induction on the height of $\alpha$ that $\Phi_{-\alpha}$
is diagonalizable on $(\overline{L}_{i,1}^+)_{-\alpha}$ with
eigenvalues of the form $q_i^{-2m}$, with $m\geq 0$ integer.  For
$\alpha = 0$ it is clear by construction. In general, there is a
finite family $(\alpha_1,r_1), \cdots, (\alpha_R,r_R)$ with the
$\alpha_j\in \Phi_0^+$, $r_j > 0$ such that the intersection of the
$\text{Ker}(F_{-\alpha_j + r_j\delta})\cap
(\overline{L}_{i,1}^+)_{-\alpha}$ is zero. By the induction
hypothesis, there is $M \geq 0$ such that the polynomial
$$P(X) = \prod_{0\leq m\leq M}(X - q_i^{-2m})$$ 
satisfies $P(\Phi) = 0$ on $\bigoplus_{1\leq j\leq R}
(\overline{L}_{i,1}^+)_{-\alpha+\alpha_j}$.  Let $r = Max_j (r_j)$ and
consider the polynomial
$$Q(X) = \prod_{0\leq m\leq M + r}(X - q_i^{-2m}).$$ 
For $1\leq j\leq R$ we have
$$Q(\Phi q_i^{-2r_j}) F_{-\alpha_j + r_j\delta} =  F_{-\alpha_j + r_j\delta}Q(\Phi).$$
But $P(X)$ divides $Q(X q_i^{-2r_j})$ and so $Q(\Phi q_i^{-2r_j}) = 0$
on $\bigoplus_{1\leq j\leq R}
(\overline{L}_{i,1}^+)_{-\alpha+\alpha_j}$.  Hence the operator
$F_{-\alpha_j + r_j\delta}Q(\Phi)$ is zero on
$(\overline{L}_{i,1}^+)_{-\alpha}$. Since this is true for any $j$, we
get $Q(\Phi) = 0$ on $(\overline{L}_{i,1}^+)_{-\alpha}$ and the
result.

For $m > 0$, we can define $(\overline{L}_{i,1}^+)_{m}$ as the
eigenspace of $\Phi$ of eigenvalue $q_i^{-2m}$.

Let us prove (3).  For $r < 0$ and $j\in I$, $[h_{j,r} -
\lambda_{j,r}, x_{j,0}^-] = [h_{j,r} , x_{j,0}^-]$ is a non zero
multiple of $x_{j,r}^-$. Hence, it suffices to prove the result for
$x_{j,0}^-$. If $j\neq i$, we have $x_{j,0}^-.v' = 0$ and so
$x_{j,0}^-\Phi = \Phi x_{j,0}^-$. Hence the result.  Now suppose that
$j = i$.  Consider a weight vector $w\in
(\overline{L}_{i,1}^+)_{m}$. We prove the result by induction on the
height of the weight of $w$. For $m = 0$ is follows from the
case $\gb = sl_2$. In general, by Section \ref{roots}, there is $F_{-\alpha
  + r\delta}$ ($\alpha \in \Phi_0^+$, $r > 0$) such that $F_{-\alpha +
  r\delta}x_{i,0}^-. w\neq 0$.  By the induction hypothesis
$$x_{i,0}^-F_{-\alpha + r\delta}.w\in (\overline{L}_{i,1}^+)_{m-r} + (\overline{L}_{i,1}^+)_{m-r+1}.$$ 
Let $\lambda, a_p, a, k$ as in Lemma \ref{decompxi}. By the result
above, we have for $p \leq 0$
$$a_p (\phi_{i,p}^- + \phi_{i,p+1}^- + \cdots + \phi_{i,0}^-)kw\subset
a_p (\overline{L}_{i,1}^+)_{m + p} \subset
(\overline{L}_{i,1}^+)_{m-r} + (\overline{L}_{i,1}^+)_{m-r+1}$$ as
$a_p\in U_q(\bo^-)^{> 0}$ is a sum of elements of $\ZZ$-degree $-r-p$
or $-r-p+1$. So
$$F_{-\alpha + r\delta}x_{i,0}^-. w \in (\overline{L}_{i,1}^+)_{m-r} + (\overline{L}_{i,1}^+)_{m-r+1}$$ 
and $x_{i,0}^-. w \in (\overline{L}_{i,1}^+)_{m} +
(\overline{L}_{i,1}^+)_{m+1}$.

To conclude, let us prove that the $(\overline{L_{i,1}}^+)_m$ are
finite-dimensional. First let us prove by induction on $m\geq 0$ that
$$(\overline{L}_{i,1}^+)_m =
\bigoplus_{\{\alpha|\alpha(\alpha_i^\vee)\leq m N_i\}}
(\overline{L}_{i,1}^+)_m \cap (\overline{L}_{i,1}^+)_{-\alpha}.$$ This
is clear if $m = 0$. For $m > 0$, let $w\in (\overline{L}_{i,1}^+)_m$
of weight $-\alpha$. Then there is $F_{-\beta + r\delta}$ such that
$F_{-\beta + r\delta}w\neq 0$.  We have $F_{-\alpha + r\delta}w \in
(\overline{L}_{i,1}^+)_{m-r}\cap
(\overline{L}_{i,1}^+)_{-\alpha+\beta}$. Hence, by induction
hypothesis, $(-\beta + \alpha)(\alpha_i^\vee)\leq (m-r)N_i$. But we
have $0\leq \beta(\alpha_i^\vee)\leq
N_i$. So $$\alpha(\alpha_i^\vee)\leq N_i + (m-r)N_i\leq m N_i$$ as $r
>0$.  If $m$ is fixed, by using the following Lemma \ref{nombre} there
is a finite number of weights $\alpha$ of $\overline{L}_{i,1}^+$ such
that $\alpha(\alpha_i^\vee)\leq m N_i$. Hence the result.

\begin{lem}\label{nombre} Let $i\in I$, $a\in\CC^*$ and $M\geq
  0$. There is a finite number of weights $\alpha$ of
  $\overline{L}_{i,a}^+$ such that $\alpha(\alpha_i^\vee)\leq M$.
\end{lem}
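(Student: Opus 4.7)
By construction $\overline{L}_{i,a}^+=(R_{i,a^{-1}}^+)^{\hat\omega}$. The twist $\hat\omega$ sends $k_j$ to $k_j^{-1}$ (it maps $\phi_{j,0}^+=k_j$ to $\phi_{j,0}^-=k_j^{-1}$), hence negates ordinary weights; combined with the weight-reversing duality $(R_{i,a^{-1}}^+)^*\simeq L_{i,a^{-1}}^-$ (Proposition~\ref{dualweight}), the net effect on ordinary weights is trivial. Thus $-\alpha$ is a weight of $\overline{L}_{i,a}^+$ if and only if it is a weight of $L_{i,a^{-1}}^-$, and by Remark~(i) after Theorem~\ref{formuachar}, $\chi(L_{i,b}^-)=\chi(L_{i,b}^+)$ independently of $b$. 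So I may replace $\overline{L}_{i,a}^+$ by $L_{i,1}^+$, and the claim becomes: for every $M\ge 0$, only finitely many $\alpha\in Q^+$ with $-\alpha$ a weight of $L_{i,1}^+$ satisfy $\alpha(\alpha_i^\vee)\le M$.

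I then invoke the explicit formula for $\chi(L_{i,1}^+)$ in \cite[Theorem~6.4]{HJ}. Let $\mathcal{S}_i\subset Q^+$ denote its support. The structural input I will extract from that formula is the following: \emph{there exist finitely many generators $\gamma^{(1)},\ldots,\gamma^{(N)}$ of the sub-monoid $\mathcal{S}_i$, each satisfying $\gamma^{(k)}(\alpha_i^\vee)\ge 1$.} Granted this, any $\alpha=\sum_k c_k\gamma^{(k)}\in\mathcal{S}_i$ with $c_k\in\Z_{\ge 0}$ satisfies $\alpha(\alpha_i^\vee)\ge\sum_k c_k$, so the hypothesis $\alpha(\alpha_i^\vee)\le M$ bounds $\sum_k c_k\le M$. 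Only finitely many tuples $(c_k)$ meet this bound, yielding finiteness of $\{\alpha\in\mathcal{S}_i:\alpha(\alpha_i^\vee)\le M\}$.

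\textbf{Main obstacle.} The crucial technical point is the positivity claim $\gamma^{(k)}(\alpha_i^\vee)\ge 1$ on every generator of $\mathcal{S}_i$. For simply-laced $\gb$, the generators can be taken to be positive roots $\gamma$ of $\gb$ with coefficient of $\alpha_i$ at least one, and then $\gamma(\alpha_i^\vee)\ge 1$ follows from the classification of positive roots. For non-simply-laced $\gb$ and $i$ corresponding to a short-root node (in $B_n, C_n, F_4, G_2$), however, there exist positive roots $\gamma$ whose coefficient of $\alpha_i$ is positive yet $\gamma(\alpha_i^\vee)\le 0$ (e.g.\ $\alpha_1+\alpha_2$ in $G_2$ at node $1$, or $3\alpha_1+2\alpha_2$ with zero pairing). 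The non-trivial content is then to verify via \cite[Theorem~6.4]{HJ} that such "bad" directions are excluded from the recession cone of $\mathcal{S}_i$; I expect this step to require a type-by-type analysis, and it is the principal hurdle to a uniform proof.
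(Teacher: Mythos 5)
Your opening reduction (character of $\overline{L}_{i,a}^+$ equals $\chi(L_{i,1}^+)$, via $\hat\omega$, duality and the $a$-independence of the character) is fine and coincides with the paper's first step. The genuine gap is exactly the step you flag, and it cannot be filled in the form you propose -- not even in the simply-laced case. Your claim that in simply-laced type every positive root $\gamma$ containing $\alpha_i$ satisfies $\gamma(\alpha_i^\vee)\ge 1$ is false: in type $A_3$ with $i=2$ the root $\gamma=\alpha_1+\alpha_2+\alpha_3$ contains $\alpha_2$ but $\gamma(\alpha_2^\vee)=0$. Moreover this is not a ``bad direction'' that a finer reading of \cite[Theorem 6.4]{HJ} could exclude from the recession cone: for every $k\ge 0$ the weight $k\omega_2-k\gamma$ lies in the Weyl orbit of $k\omega_2$, hence is an (extreme) weight of the KR module $W^{(2)}_{k,1}$, and since by \cite[Theorem 6.1]{HJ} the normalized characters $\tilde{\chi}(W^{(i)}_{k,1})$ exhaust $\chi(L_{i,1}^-)=\chi(L_{i,1}^+)$ as $k\to\infty$ (they are nested by (\ref{recu})), the multiples $k\gamma$ all occur in the support of the prefundamental character while pairing to $0$ with $\alpha_2^\vee$. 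So a bound on the pairing $\alpha(\alpha_i^\vee)$ genuinely does not cut the support down to a finite set, and no type-by-type analysis will produce the positivity you need. The quantity for which the finiteness holds -- and which is what the argument downstream (the proof of Theorem \ref{defigrad}) and Lemma \ref{kr} actually track -- is the multiplicity $ht_i(\alpha)=\alpha(\omega_i^\vee)$ of $\alpha_i$ in $\alpha$: every generator of the support monoid in \cite[Theorem 6.4]{HJ} contains $\alpha_i$ with coefficient at least $1$, so with that invariant your counting of tuples closes uniformly, with no case analysis at all.

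For comparison, the paper's own proof does not use the explicit character formula: after the same reduction it invokes \cite[Theorem 6.1]{HJ} together with the recursion (\ref{recu}) (equivalently Lemma \ref{kr}), which says that any monomial appearing at stage $k$ but not at stage $k-1$ carries at least $k$ factors $A_{i,\cdot}^{-1}$. Hence a weight $\alpha$ of the limit whose $\alpha_i$-multiplicity is at most $M$ must already occur in the finite-dimensional module $W^{(i)}_{M,1}$, and finiteness is immediate. I recommend you either restate and prove the lemma with $ht_i(\alpha)$ in place of $\alpha(\alpha_i^\vee)$ (your monoid argument then works), or adopt this KR-truncation argument, which is shorter and needs only the limit theorem and the $T$-system-type recursion.
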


\begin{proof} By construction, the character of $\overline{L}_{i,a}^+$
  is the character of $L_{i,a}^-$ and it does not depend on $a$. It is
  proved in \cite[Theorem 6.1]{HJ} that $\chi(L_{i,1}^-)$ is the limit
  of $\tilde{\chi}(W_{k,1}^{(i)})$ when $k\rightarrow +\infty$. By
  (\ref{recu}), a weight $\alpha$ satisfying
  $\alpha(\alpha_i^\vee)\leq M$ is a weight of $\overline{L}_{i,a}^+$
  only if it is a weight of $W_{M,1}^{(i)}$. Hence the result.
\end{proof}

\begin{rem} In the proof, if instead of $\overline{L}_{i,q_i^{-2}}^+$
  we had used $\overline{L}_{i,q_i^{-2k}}^+$ with $k\geq 2$, we would
  get the same grading. \end{rem}

\section{End of the proof of Theorem \ref{catOpol}}\label{endproof}

In this section we finish the proof of Theorem \ref{catOpol}.
We first recall the factorization of the universal $R$-matrix 
and show that it implies Proposition \ref{defo}.
Then we establish the main reduction for the proof of Theorem \ref{catOpol}
in Section \ref{reduc}: it suffices to consider certain distinguished
tensor products of fundamental representations.

\subsection{Factorization of the universal $R$-matrix}\label{fact} The
universal $R$-matrix has a factorization \cite{da}
$$\mathcal{R} = \mathcal{R}^+\mathcal{R}^0\mathcal{R}^-\mathcal{R}^\infty,$$
where $\mathcal{R}^\pm \in U_q(\mathfrak{b})^\pm\hat{\otimes}
U_q(\mathfrak{b}^-)^\mp$, 
$$\mathcal{R}^0 = \text{exp}\left( -  \sum_{m > 0,i,j\in I} \frac{(q_i - q_i^{-1})(q_j -
    q_j^{-1})m
    \tilde{B}_{i,j}(q^m)}{(q - q^{-1})[m]_q}h_{i,m}\otimes h_{j,-m}\right),$$
and $\mathcal{R}^\infty = q^{-t_\infty}$ where
$t_\infty\in\dot{\mathfrak{h}}\otimes\dot{\mathfrak{h}}$ is the
canonical element (for the standard invariant symmetric bilinear form
as in \cite{da}), that is, if we denote formally $q = e^h$, then
$\mathcal{R}^\infty = e^{-h t_\infty}$.

For a variable $x$, the $q$-exponential in $x$ is a formal power series
$\text{exp}_{q^p} (x) = \sum_{r\geq 0} \frac{x^r}{[r]_{q^p}'!}$ where
$p\in\ZZ$ and $[r]_v'! = \prod_{1\leq s\leq r}\frac{v^{2s} - 1}{v^2 -
  1} = v^{\frac{r(r-1)}{2}}[r]_v!$ for $r\geq 0$.

$\mathcal{R}^+$ (resp. $\mathcal{R}^-$) is a product of
$q$-exponentials of a multiple of $E_{\alpha + m\delta}\otimes
F_{\alpha + m\delta}$ with $m \geq 0$, $\alpha\in\Phi_0^+$ (resp. with
$m > 0$, $\alpha\in\Phi_0^-$).

\begin{example} In the case $\gb = sl_2$, we have 
$$\mathcal{R}^+ = \prod_{m\geq 0}\text{exp}_q\left((q^{-1} -
  q)x_{1,m}^+\otimes x_{1,-m}^-\right)\text{ , } \mathcal{R}^- =
\prod_{m > 0}\text{exp}_q\left((q - q^{-1})k_1^{-1}x_{1,m}^-\otimes
  x_{1,-m}^+  k_1\right),$$
$$\mathcal{R}^0 = \text{exp}\left( - (q - q^{-1}) \sum_{m > 0}
  \frac{m}{[m]_q(q^m + q^{-m})}h_{1,m}\otimes h_{1,-m}\right).$$
\end{example}

\subsection{Proof of Proposition \ref{defo}}\label{proofdefo}
Let $i\in I$. Since each $F_{\alpha + m\delta}$ ($m > 0,
\alpha \in \Phi_0^-$) acts by $0$ on the lowest weight vector of
$R_{i,1}^+$, only $\mathcal{R}^0$ and $\mathcal{R}^\infty$ contribute
to the specialization of $t_{R_{i,1}^+}(z,u)$.  Let us replace in
$\mathcal{R}^0$ each $h_{l,m}$ ($m > 0, l\in I$) by $-z^m
\frac{\delta_{i,l}}{m(q_i - q_i^{-1})}$, that is we take the trace on
$R_{i,1}^+$. We get
$$\text{exp}\left(  \sum_{j\in I, m > 0} z^m
  \frac{\tilde{B}_{i,j}(q^m)[d_j]_q}{[m]_q}h_{j,-m}\right) =
\text{exp}\left( \sum_{m > 0} z^m
  \frac{\tilde{h}_{i,-m}}{[d_i]_q[m]_{q_i}}\right) = T_i(z)$$ 
  $$\text{as }[d_j]_q[m]_{q_j} = [m]_q [d_j]_{q^m}\text{ and
}\tilde{B}_{i,j}(q^m) = \tilde{C}_{j,i}(q^m)/[d_i]_{q^m}\text{ for any 
$m\in\ZZ$, $j\in I$.}$$ We have proved Proposition \ref{defo}.

\subsection{Reduction}\label{reduc} It suffices to prove theorem
\ref{catOpol} for $W$ in $\mathcal{C}$ simple or standard (that is a
tensor product of fundamental representations).  A representation in
$\mathcal{C}$ is said to be thin if its $\ell$-weight spaces are of
dimension $1$.
 
\begin{rem}\label{minuscule} Let $i\in I$ and $a\in\CC^*$. If $N_i =
  1$, then $L(Y_{i,a})$ is simple as a $U_q(\gb)$-module, as shown by
  V. Chari \cite{C1} (see also \cite[Remark 7.7]{HJ}). In particular,
  if this $U_q(\gb)$ fundamental representation is minuscule, then
  $L(Y_{i,a})$ is thin.
\end{rem}

\begin{prop} Suppose that $\gb$ is not of type $E_8$. Any simple
  object in $\mathcal{C}$ occurs as a simple constituent of a tensor
  product of thin fundamental representations.
\end{prop}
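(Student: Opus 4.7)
The plan is as follows. I would first reduce to fundamental representations: by the classification of simple objects of $\mathcal{C}$ via dominant monomials, any simple $L(M) \in \mathcal{C}$ with $M = \prod_{i\in I, a\in\CC^*} Y_{i,a}^{u_{i,a}}$ occurs as the unique simple subquotient of highest $\ell$-weight of the standard module $\bigotimes_{i,a} L(Y_{i,a})^{\otimes u_{i,a}}$ (using the observation following Proposition \ref{simple} on tensor products of highest $\ell$-weight representations). Hence it suffices to show that whenever $\gb$ is not of type $E_8$, each fundamental representation $L(Y_{i,a})$ is itself a simple constituent of some tensor product of thin fundamental representations.

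I would then argue case-by-case on the Dynkin type of $\gb$. In type $A_n$, every fundamental representation is minuscule and satisfies $N_i = 1$, hence is thin by Remark \ref{minuscule}, and there is nothing further to prove. In types $B_n$, $C_n$ and $D_n$, I would use that the vector (defining) representation is thin (directly from its known $q$-character, which has $\ell$-weight spaces all of dimension one), together with the (half-)spin fundamental(s), which are thin by Remark \ref{minuscule}; the remaining higher fundamentals are then realized as simple constituents of suitable tensor products of these small thin ones, the decompositions being obtained by locating the appropriate dominant monomial in a product of known $q$-characters. In types $E_6$ and $E_7$, the minuscule fundamentals (the two $27$-dimensional for $E_6$ and the $56$-dimensional for $E_7$) are thin by Remark \ref{minuscule}, and every other fundamental representation occurs as a simple constituent of an iterated tensor product of these minuscules. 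In types $F_4$ and $G_2$, the short-root fundamental representation ($26$-dimensional and $7$-dimensional respectively) is thin by a direct $q$-character computation, and every other fundamental representation is a simple constituent of some tensor power of this short-root fundamental.

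The main obstacle is the explicit verification of the tensor-product realizations for the non-thin fundamentals in the exceptional types and in the higher exterior powers of the classical types; in each case the argument reduces to exhibiting the dominant monomial of the target fundamental inside a tensor product of $q$-characters already known. The restriction $\gb \neq E_8$ is essential because in type $E_8$ no fundamental representation is thin: all eight fundamentals have $\ell$-weight spaces of dimension greater than one (already the $248$-dimensional adjoint fundamental has an eight-dimensional zero-weight subspace giving rise to non-trivial $\ell$-weight multiplicities), and the smallest fundamental is the adjoint itself, so there is no base thin fundamental on which to build the above tensor-product decomposition scheme.
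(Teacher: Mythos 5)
Your overall strategy is the same as the paper's (reduce to fundamental representations, find thin ones, realize the rest as summands in tensor products of thin ones), but there are several genuine gaps in the execution.

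First, you misidentify the base of thin fundamentals and misapply Remark \ref{minuscule}. That Remark requires $N_i=1$ \emph{in addition} to the $U_q(\gb)$-minusculity; for the spin node of $B_n$ one has $N_n=2$, and for the vector node of $C_n$ one has $N_1=2$, so the Remark does not apply to the very representations you want to use as a base in types $B$ and $C$. The paper avoids this by citing \cite{hmon}: \emph{all} fundamental representations in types $A$, $B$, $C$, $G_2$ are thin, so no tensor-product construction is needed there at all. Your plan to build higher fundamentals of $B$, $C$, $G_2$ from ``small'' thin ones is therefore both unjustified (wrong invocation of Remark \ref{minuscule}) and unnecessary.

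Second, the heart of the proposition in the remaining types ($D_n$, $E_6$, $E_7$, $F_4$) is the explicit $T$-system-type identities such as
\[
[L(Y_{1,a})\otimes L(Y_{i,aq^{i+1}})] = [L(Y_{1,a}Y_{i,aq^{i+1}})] + [L(Y_{i+1,aq^i})],
\]
each of which requires a nontrivial verification (locating dominant monomials via the Frenkel--Mukhin algorithm, checking affine-minusculity as in \cite{hcr,hmin}). You defer this entirely to ``locating the appropriate dominant monomial,'' which is precisely the part that needs to be done; for $E_7$ the paper has to produce an explicit long monomial and chain together several such relations. Without exhibiting these relations (or at least citing a result providing them), the proposal has a gap.

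Third, your reasoning for the $E_8$ obstruction is off. An $8$-dimensional zero-weight space does not by itself force an $\ell$-weight space of dimension $>1$: those eight dimensions could in principle be spread over eight distinct $\ell$-weights. The paper's point (from \cite{hn} and reductions to sub-Dynkin diagrams) is a finer computation showing that each fundamental of $E_8$ in fact has some $\ell$-weight space of dimension exactly $2$ (for $L(Y_{1,a})$, at the monomial $Y_{5,aq^{14}}Y_{5,aq^{16}}^{-1}$). So the conclusion is correct but the justification you give does not establish it.

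Your first reduction step (from arbitrary simple modules to fundamentals via standard modules) is fine and matches the paper.
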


\begin{example} If $\gb = sl_2$, any simple object is $\mathcal{C}$ is
  isomorphic to a tensor product of thin KR-modules, as shown by
  V. Chari and A. Pressley \cite{CP}.
\end{example}

\begin{proof} This is clear for $\gb$ of type $A$, $B$, $C$ or $G_2$
  as all fundamental representations are thin \cite{hmon}.

  Type $D_n$ ($n\geq 4$): for $a\in\CC^*$ and $i \in\{1,n-1,n\}$, the
  representation $L(Y_{i,a})$ is thin (for example by Remark
  \ref{minuscule}).  We have to prove the result for $L(Y_{i,a})$,
  $2\leq i\leq n-2$.  It can be obtained by induction on $i$ by using
  the following relation for $1\leq i\leq n-3$:
$$[L(Y_{1,a})\otimes L(Y_{i,aq^{i+1}})] = [L(Y_{1,a}Y_{i,aq^{i+1}})] + [L(Y_{i+1,aq^i})].$$
This relation can be easily established following the proof of
$T$-systems relations in \cite{hcr}.  By \cite{Fre2}, it suffices to
prove that dominant monomials have the same multiplicities when the
$q$-character morphism is applied on both sides of the
identity. Besides, the $q$-character of a fundamental representation
has a unique dominant monomial (that is it is affine-minuscule) and
its $q$-character is given by the algorithm of Mukhin and the first
author \cite{Fre2}. Then it is not difficult to see that the left side
of the identity has $2$ dominant monomials: the highest monomial
$Y_{1,a}Y_{i,aq^{i+1}}$ and
$Y_{1,a}Y_{i,aq^{i+1}}A_{1,aq}^{-1}A_{2,aq^2}^{-1}\cdots
A_{i,q^i}^{-1} = Y_{i+1,aq^i}$.  Then the identity result follows as
$L(Y_{i+1,aq^i})$ is affine-minuscule and as it can be proved as in
\cite{hcr, hmin} that $L(Y_{1,a}Y_{i,aq^{i+1}})$ is affine-minuscule.

Type $F_4$: for $a\in\CC^*$ and $i = 1, 4$, the representation
$L(Y_{i,a})$ is thin \cite{hmon}. We have to prove the result for
$L(Y_{2,a})$, $L(Y_{3,a})$. As above, it follows from the relations for
$a\in\CC^*$:
$$[L(Y_{1,a})\otimes L(Y_{1,aq^4})] = [L(Y_{1,a}Y_{1,aq^4})] + [L(Y_{2,aq^2})],$$
$$[L(Y_{4,a})\otimes L(Y_{4,aq^2})] = [L(Y_{4,a}Y_{4,aq^2})] + [L(Y_{3,aq})].$$

Type $E_6$: for $a\in\CC^*$ and $i = 1,5$, the representation
$L(Y_{i,a})$ is thin (for example by Remark \ref{minuscule}). By using
the same arguments as above, this implies the result for $i \neq
6$. Then we have
$$[L(Y_{1,a})\otimes L(Y_{5,aq^6})] = [L(Y_{1,a}Y_{5,aq^6})] + [L(Y_{6,aq^3})].$$

Type $E_7$: for $a\in\CC^*$, the representation $L(Y_{6,a})$ is thin
(for example by Remark \ref{minuscule}). As above, we get the result
for $i = 6,5,4,3$.  Now the monomial
$$Y_{1,aq^5}Y_{6,aq^{10}}^{-1} = Y_{6,a}A_{6,aq}^{-1}A_{5,aq^2}^{-1}A_{4,aq^3}^{-1}A_{3,aq^4}^{-1}A_{7,aq^5}^{-1}A_{2,aq^5}^{-1}A_{3,aq^6}^{-1}A_{4,aq^7}^{-1}A_{5,aq^8}^{-1}A_{6,aq^9}^{-1}$$
occurs in $\chi_q(L(Y_{6,a}))$. In particular, we get as above 
$$[L(Y_{6,a})\otimes L(Y_{6,aq^{10}})] = [L(Y_{6,a}Y_{6,aq^{10}})] +
[L(Y_{1,aq^5})].$$
So we have the result for $i = 1, 2$. We conclude as
$$[L(Y_{6,a})\otimes L(Y_{1,aq^7})] = [L(Y_{6,a}Y_{1,aq^7})] + [L(Y_{7,aq^4})].$$
\end{proof}

\begin{rem} The statement is not satisfied for $\gb$ of type
  $E_8$. Indeed the fundamental representation $L(Y_{i,a})$ is not
  thin for any $i$ in this case.  For $i = 1$, it is known by
  \cite[Section 6.1.2]{hn}. The Lie algebra for the sub Dynkin diagram
  $I\setminus\{7\}$ is of type $D_7$. The $q$-character of fundamental
  representations are known for $\gb$ of type $D$.  In particular we have the
  result for $2\leq i\leq 5$. In the same way we conclude for $i = 6$
  by considering $I\setminus\{1,2,3\}$.  The Lie algebra for the sub
  Dynkin diagram $I\setminus\{1\}$ is of type $E_7$ and by using
  \cite[Section 6.1.2]{hn}, we get the result for $i = 7$.  The Lie
  algebra for the sub Dynkin diagram $I\setminus\{1,2\}$ is of type
  $E_6$ and by using \cite[Section 6.1.2]{hn}, we get the result for
  $i = 8$.
\end{rem}

\begin{prop} Suppose that $\gb$ is of type $E_8$. Any simple object in
  $\mathcal{C}$ occurs as a simple constituent of a tensor product of
  fundamental representations $L(Y_{i,a})$ with $i = 1$.
\end{prop}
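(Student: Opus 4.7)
The plan is to reduce the statement, as in the proof of the preceding proposition, to showing that for each $i\in I$ and each $a\in\CC^*$ the fundamental representation $L(Y_{i,a})$ is a simple constituent of some tensor product of modules of the form $L(Y_{1,b_k})$. Once this reduction is in place, an arbitrary simple $L(m)$ in $\mathcal{C}$ with highest dominant monomial $m=\prod_{i,a} Y_{i,a}^{u_{i,a}}$ appears as a simple constituent of a tensor product $\bigotimes_{i,a} L(Y_{i,a})^{\otimes u_{i,a}}$ by the standard q-character argument (since $m$ is visible as the highest monomial of this product), and then substituting each factor $L(Y_{i,a})$ by a tensor product of $L(Y_{1,*})$'s yields the claim.

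To reach every vertex of the $E_8$ Dynkin diagram starting from $1$, I would proceed by induction along a chosen traversal of the diagram. At each step, I would establish a relation in $\on{Rep}(U_q(\g))$ of $T$-system type,
\[
[L(Y_{1,a})\otimes L(Y_{1,aq^{2k}})] \;=\; [L(Y_{1,a}Y_{1,aq^{2k}})] \;+\; \sum_{j} [L(M_j)],
\]
for a carefully chosen shift $k$, in which at least one of the summands on the right has the form $[L(Y_{j,b})]$, possibly multiplied by the class of a simple module already built from $L(Y_{1,*})$'s at earlier inductive stages. These identities are verified by the same template as in the proof of the previous proposition: using that $L(Y_{1,a})$ is affine-minuscule (its q-character has a unique dominant monomial) and that its q-character is computable by the Frenkel--Mukhin algorithm \cite{Fre2}, together with the explicit information in \cite[Section 6.1.2]{hn}, one enumerates the dominant monomials occurring in $\chi_q(L(Y_{1,a})\otimes L(Y_{1,aq^{2k}}))$ and checks that they coincide exactly with the highest monomials of the summands on the right-hand side; the identity in the Grothendieck ring then follows from the injectivity of $\chi_q$.

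The main obstacle is combinatorial: one must actually exhibit, for each of the remaining seven vertices of the $E_8$ Dynkin diagram, a sequence of such $T$-system identities emanating from vertex $1$ that reaches the corresponding $L(Y_{j,b})$. This amounts to choosing the spectral-parameter shifts $q^{2k}$ so that the ``extra'' dominant monomial produced in the tensor product, which has the form $Y_{1,a}Y_{1,aq^{2k}}\prod_\ell A_{i_\ell,a_\ell}^{-1}$, factors as the highest monomial of the desired new fundamental representation times a dominant monomial of a previously obtained simple module. Because $\chi_q(L(Y_{1,a}))$ is substantially larger in type $E_8$ than in the types treated in the previous proposition, this is the only non-routine step; in principle it is carried out by a finite, explicit computation using the Frenkel--Mukhin algorithm. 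The sub-Dynkin-diagram arguments used in the preceding remark (namely that $I\setminus\{1\}$ is of type $E_7$, $I\setminus\{1,2\}$ of type $E_6$, and $I\setminus\{1,2,3\}$ of type $D_7$, for which analogous statements are already known) suggest a natural order in which to traverse the diagram, thereby keeping the combinatorics manageable.
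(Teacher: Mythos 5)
Your strategy is the one the paper follows: reduce to realizing every fundamental representation as a constituent of a tensor product of modules $L(Y_{1,*})$, and reach the remaining nodes of the Dynkin diagram by two-term relations of $T$-system type, verified by counting dominant monomials via the Frenkel--Mukhin algorithm and the injectivity of $\chi_q$. The problem is that the step you set aside as ``the only non-routine step'' is precisely the content of the paper's proof: you never exhibit the spectral shifts, the relevant monomials of $\chi_q(L(Y_{1,a}))$, or the check that no extra dominant monomials occur, and in type $E_8$ none of this is automatic (the thin-fundamental route of the preceding proposition genuinely fails here, which is why a separate argument is needed). As written, your text is a proof plan whose decisive combinatorial input is only asserted to exist ``in principle''.

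For comparison, the paper's actual argument is short and fully explicit. For $1\leq i\leq 5$ (the $A$-type chain attached to node $1$ in the Kac numbering) it uses the same chain relations as in the non-$E_8$ case, $[L(Y_{1,a})\otimes L(Y_{i,aq^{i+1}})]=[L(Y_{1,a}Y_{i,aq^{i+1}})]+[L(Y_{i+1,aq^i})]$, tensoring $L(Y_{1,a})$ with the fundamental obtained at the previous step; note these are not of your displayed form $L(Y_{1,a})\otimes L(Y_{1,aq^{2k}})$, and keeping each identity a clean two-term one avoids having to identify all summands in a general $\sum_j[L(M_j)]$. The genuinely new input is the explicit monomial
$$Y_{7,aq^6}Y_{1,aq^{12}}^{-1}=Y_{1,a}A_{1,aq}^{-1}A_{2,aq^2}^{-1}A_{3,aq^3}^{-1}A_{4,aq^4}^{-1}A_{5,aq^5}^{-1}A_{8,aq^6}^{-1}A_{6,aq^6}^{-1}A_{5,aq^7}^{-1}A_{4,aq^8}^{-1}A_{3,aq^9}^{-1}A_{2,aq^{10}}^{-1}A_{1,aq^{11}}^{-1}$$
occurring in $\chi_q(L(Y_{1,a}))$ (established via \cite[Section 6.1.2]{hn} and the algorithm of \cite{Fre2} for this $249$-dimensional representation), which yields $[L(Y_{1,a})\otimes L(Y_{1,aq^{12}})]=[L(Y_{1,a}Y_{1,aq^{12}})]+[L(Y_{7,aq^6})]$ and hence the result for $i=7$ and then $i=6$; finally $[L(Y_{1,a})\otimes L(Y_{7,aq^8})]=[L(Y_{1,a}Y_{7,aq^8})]+[L(Y_{8,aq^5})]$ gives $i=8$. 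Your reduction from fundamentals to arbitrary simples is fine, but without producing relations of this kind for the seven remaining nodes your induction has no content beyond the chain $i\leq 5$, so the proposition is not established by the proposal as it stands.
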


\begin{proof} As above, we have the result for $1\leq i\leq 5$. Now
  the monomial
$$Y_{7,aq^6}Y_{1,aq^{12}}^{-1} =
Y_{1,a}A_{1,aq}^{-1}A_{2,aq^2}^{-1}A_{3,aq^3}^{-1}A_{4,aq^4}^{-1}A_{5,aq^5}^{-1}A_{8,aq^6}^{-1}
A_{6,aq^6}^{-1}A_{5,aq^7}^{-1}A_{4,aq^8}^{-1}A_{3,aq^9}^{-1}A_{2,aq^{10}}^{-1}A_{1,aq^{11}}^{-1}$$
occurs in $\chi_q(L(Y_{1,a}))$. In particular, we get as above
$$[L(Y_{1,a})\otimes L(Y_{1,aq^{12}})] = [L(Y_{1,a}Y_{1,aq^{12}})] + [L(Y_{7,aq^6})].$$
So we have the result for $i = 7, 6$. We conclude as
$$[L(Y_{1,a})\otimes L(Y_{7,aq^8})] = [L(Y_{1,a}Y_{7,aq^8})] + [Y_{8,aq^5}].$$
\end{proof}

Consequently, if $\gb$ is not type $E_8$ (resp. is of type $E_8$) it
suffices to prove Theorem \ref{catOpol} for $W$ tensor product of thin
fundamental representations (resp. of fundamental representations $L(Y_{1,a})$).

\subsection{Proof of Theorem \ref{catOpol}}\label{proofgene}

By (\ref{shifttr}), we can assume that $a = 1$. We use the grading of
$V = R_{i,1}^+$ established in Theorem \ref{defigrad}.  For $j\in I$
and $r > 0$ let us consider $\overline{h}_{j,r} = h_{j,r} -
\frac{\delta_{i,j}}{r (q_i - q_i^{-1})}$. Note that by Remark
\ref{nilph}, $\pi_V(\overline{h}_{j,r})$ is nilpotent.  We have
$$\mathcal{R}^0 =
 \text{exp}\left( - \frac{(q_i - q_i^{-1})(q_j - q_j^{-1})}{(q -
     q^{-1})} \sum_{m > 0,i,j\in I} \frac{m
     \tilde{B}_{i,j}(q^m)}{[m]_q}\overline{h}_{i,m}\otimes
   h_{j,-m}\right) T_i(1).$$
This implies a factorization
$$L_V(z) = L_V^+(z) L_V^0(z)  (\text{Id}_V\otimes T_i(z))L_V^-(z)L_V^\infty,$$
where $L_V^\pm(z) = (\pi_V(z)\otimes Id)(\mathcal{R}^\pm)$, $L_V^\infty =
q^{-(\pi_V\otimes \text{Id})(t_\infty)}$ and
$$L_V^0(z) = \text{exp}\left( - \frac{(q_i - q_i^{-1})(q_j -
    q_j^{-1})}{(q - q^{-1})} \sum_{m > 0,i,j\in I} z^m\frac{m
    \tilde{B}_{i,j}(q^m)}{[m]_q}\pi_V(\overline{h}_{i,m})\otimes
  h_{j,-m}\right).$$

\noindent Let $W$ be a simple object in $\mathcal{C}$. The image of
$t_V(z,u)$ in $(\text{End}(W))[[z,u_j^{\pm 1}]]_{j\in I}$ is a
(possibly infinite) linear combination of terms of the following form
(we do not include $L_V^\infty$ which does not depend on $z$): the
product of two factors, the first one being
\begin{equation}\label{termun}z^R Tr_{V,u}(E_{\beta_1 +
    s_1\delta}\cdots E_{\beta_{r'} +
    s_{r'}\delta}\overline{h}_{i_1,r_1}\cdots
  \overline{h}_{i_p,r_p}E_{q_1 \delta -\gamma_1  }\cdots E_{q_r\delta
    -\gamma_{r}  })\end{equation}
and the second one being
\begin{equation}\label{termdeux}\pi_W(F_{\beta_1+s_1\delta}\cdots
  F_{\beta_{r'}+s_{r'}\delta}h_{i_1,-r_1}\cdots h_{i_p,-r_p}
  T_i(z)F_{q_1\delta -\gamma_1  }\cdots F_{q_r\delta -\gamma_{r}
  }),\end{equation}
where $\beta_1,\cdots, \beta_{r'}\in \Phi_0^+$,
$\gamma_1,\cdots,\gamma_r\in\Phi_0^+$, $i_1,\cdots, i_p\in I$,
$s_1,\cdots, s_{r'}\geq 0$, $r_1,\cdots, r_p > 0$, $q_1,\cdots, q_r >
0$ and
$$R = (s_1 + \cdots + s_{r'}) + (r_1 + \cdots + r_p) + (q_1 + \cdots  + q_r).$$
Moreover, so that (\ref{termun}) is non zero, we must have for weight
reason
$$\beta_1+ \cdots + \beta_{r'} = \gamma_1 + \cdots +\gamma_r$$
and by the conditions in Theorem \ref{defigrad}
$$R \leq (\beta_1(\alpha_i^\vee) + \cdots + \beta_{r'}(\alpha_i^\vee)).$$
Hence we have
$$R\leq (\gamma_1(\alpha_i^\vee) + \cdots + \gamma_{r}(\alpha_i^\vee)).$$
So that (\ref{termdeux}) is non zero on $(W)_\lambda$, we must have
for weight reason
$$(\gamma_1(\alpha_i^\vee) + \cdots + \gamma_{r}(\alpha_i^\vee))\leq
ht_i(\omega - \lambda).$$
This implies $R\leq ht_i(\omega - \lambda)$. Hence, there is a finite
number of choices for the $s_1,\cdots,s_{r'}$, $r_1,\cdots, r_p$,
$q_1,\cdots, q_r$.  So the total sum is a finite linear combination of
those terms.

Now suppose that $W$ is simple and thin.  By Proposition \ref{firstpol},
the eigenvalues of $T_i(z)$
on $W_\lambda$ are of the form $f(z) Q(z)$ where $f(z)$ does not
depend on $\lambda$ and $Q$ is a polynomial of degree $ht_i(\omega -
\lambda)$.  The restriction of $(f(z))^{-1}T_i(z)$ on $W_\lambda$ is a
polynomial of degree $ht_i(\omega - \lambda)$. Hence each factor
(\ref{termdeux}) is the product of $f(z)$ by a polynomial of degree
$ht_i(\omega - \lambda) - r$. Hence $(f(z))^{-1}t_V(z,u)$ is a
polynomial in $z$ of degree at most $ht_i(\omega - \lambda)$.

Now consider a tensor product $W = W_1\otimes \cdots \otimes W_N$ of
thin simple representations $W_i$. By
using inductively the formula $(\text{Id}\otimes \Delta)(\mathcal{R})
= \mathcal{R}_{1,2}\mathcal{R}_{1,3}$ we have for $N\geq 2$
\begin{equation}\label{intere}(\text{Id}\otimes \Delta)^N(\mathcal{R})
  = \mathcal{R}_{1,2}\mathcal{R}_{1,3}\mathcal{R}_{1,4}\cdots
  \mathcal{R}_{1,N+2}.\end{equation}
Hence we can use the same proof as above where the term inside $\pi_V$
in the factor (\ref{termun}) is replaced by a product of $N$ such
terms and the factor (\ref{termdeux}) is replaced by a product of $N$
such terms with $\pi_{W_j}$ ($1\leq j\leq N$) instead of $\pi_W$.

Let us conclude the proof of Theorem \ref{catOpol} when $\gb$ is not
of type $E_8$.  It suffices to prove that the degree of the polynomial
in $z$ is not only less than or equal to $ht_i(\omega - \lambda)$, but
equal to it. First note that Theorem \ref{catOpol} implies Theorem
\ref{jordan}, so the degree of the polynomial in Theorem \ref{jordan}
is less than or equal to the degree in Theorem \ref{catOpol}. But the
eigenvalues of $(f_{i}(z))^{-1}T_i(z)$ are exactly of degrees
$ht_i(\omega - \lambda)$ by Proposition \ref{firstpol}. Hence the
result. For the same reason, the degree is exactly $ht_i(\omega -
\lambda)$ in Corollary \ref{catOcase}.

To conclude, let us prove the result when $\gb$ is of type $E_8$. We
have seen it suffices to consider the case of a tensor product of
fundamental representations $W = L(Y_{1,a})$. This representation has
been studied for example in \cite[Section 6.1.2]{hn}\footnote{The
  representation $W$ has dimension $249$. As a $U_q(\gb)$-module, it
  has two simple constituents, one of them is the trivial
  representation of dimension $1$.}. It has a unique $\ell$-weight
space $W'$ whose dimension is not $1$: it corresponds to the monomial
$m = Y_{5,aq^{14}}Y_{5,aq^{16}}^{-1}$ and it is of dimension $2$.  To
use the same argument as above, we just have to prove the action of
$(f(z))^{-1}T_i(z)$ on $W'$ is a polynomial of degree $ht_i(\omega -
\lambda)$ (with the same notation as above).

The $q$-character of $W$ can be computed by using the algorithm of
Mukhin and the first author
\cite{Fre2}. In particular the monomials in $m\ZZ[A_{5,b}^{\pm
  1}]_{b\in\CC^*}$ which occur in $\chi_q(W)$ are necessarily $m$, $m
A_{5,aq^{15}}$, $mA_{5,aq^{15}}^{-1}$ with respective multiplicities
$1$, $2$, $1$. Besides, by setting $Y_{j,b} = 1$ for $j\neq 5$ in $ m
A_{5,aq^{15}} + 2m + mA_{5,aq^{15}}^{-1}$ we get $Y_{5,aq^{14}}^2 + 2
Y_{5,aq^{14}}Y_{5,aq^{16}}^{-1} + Y_{5,aq^{16}}^{-2}$ which is the
$q$-character of a simple $U_q(\hat{sl_2})$-module of dimension $4$.
Let $U$ be the subalgebra of $U_q(\Glie)$ generated by the
$x_{5,m}^\pm$, $k_5^{\pm 1}$, $\tilde{h}_{5,r}$ ($m\in\ZZ$,
$r\in\ZZ\setminus\{0\}$). Then $U$ is isomorphic to $U_q(\hat{sl_2})$.
From the discussion above, $W'$ generates a simple $U$-module of
dimension $4$ which is the sum of $3$ $\ell$-weight spaces of $W$.
Let $w$ be an highest weight vector of this module. Then $\CC.w$ is a
$\ell$-weight space of $W$ of dimension $1$ and $(f(z))^{-1}T_i(z)$ is
a polynomial on $\CC.w$.  But we have also $W' = \sum_{m\in\ZZ}
\CC.x_{5,m}^-.w$. By Lemma \ref{commti}, for $i\neq 5$, $T_i(z)$
commutes with the
$x_{5,m}^-$. So, we have the result on $W'$. Suppose that $i = 5$.
Then $T_5(z)\in U[[z]]$. The result follows from Theorem \ref{jordan}
that we have already established in the case $\gb = sl_2$ (in fact, this is
exactly the example explained in Section \ref{secjordan}).

\begin{rem} In the case $N_i = 1$, for example for any $i$ for $\gb$
  of type $A$, the proof in Section \ref{proofgene} is simplified
  thanks to Section \ref{simpleun}.  Indeed, by Theorem \ref{asympt},
  we have $\pi_V(E_{\alpha + m\delta}) = 0$ for ($m\geq 1$ and $\alpha
  \in\Phi_0^+$) or ($m > - \alpha(\alpha_i^\vee)$ and $\alpha
  \in\Phi_0^-$). Moreover we have a scalar action $\pi_V(h_{j,m}) =
  \frac{-\delta_{i,j}\text{Id}_V}{m (q_i - q_i^{-1})}$ for $m > 0$ and
  $j\in I$.  This implies $L_V^0(z) =1$.  Consider a tensor product $W
  = W_1\otimes \cdots \otimes W_N$ of thin simple representations. By
  using formula (\ref{intere}), we can use the same arguments as above
  by considering only terms of the form
$$z^{r_1+\cdots + r_N} Tr_{V,u}((E_{\beta_1^{(1)}}\cdots
E_{\beta_{r'_1}^{(1)}}E_{\delta -\gamma_1^{(1)}}\cdots E_{\delta
  -\gamma_{r_1}^{(1)}})\cdots (E_{\beta_1^{(N)}}\cdots
E_{\beta_{r'_N}^{(N)}}E_{\delta -\gamma_1^{(N)}}\cdots E_{\delta
  -\gamma_{r_N}^{(N)}}))$$
$$\pi_{W_1}(F_{\beta_1^{(1)}}\cdots
F_{\beta_{r'_1}^{(1)}}T_i(z)F_{\delta -\gamma_1^{(1)}}\cdots F_{\delta
  -\gamma_{r_1}^{(1)}})\otimes \cdots \otimes
\pi_{W_N}(F_{\beta_1^{(N)}}\cdots
F_{\beta_{r'_N}^{(N)}}T_i(z)F_{\delta -\gamma_1^{(N)}}\cdots F_{\delta
  -\gamma_{r_N}^{(N)}})
$$
where $r_1\leq R_1,\cdots, r_N\leq R_N$,
$\gamma_1^{(1)}(\alpha_i^\vee) = \cdots =
\gamma_{r_N}^{(N)}(\alpha_i^\vee) = 1$ and $R_1,\cdots, R_N$ are
fixed.
\end{rem}

\begin{example} Let us now consider the example in Section \ref{sl2ex}
  from the angle of the proof above (strictly speaking, the example
  here is essentially equivalent to Section \ref{sl2ex}).  In the
  case $\gb = sl_2$ with $V = R_{1,1}^+$, we have $\pi_V(x_{1,m}^+) = 0$ for
  $m\geq 1$ and $\pi_V(x_{1,m}^-) = 0$ for $m\geq 2$.  Moreover
  $\pi_V(h_{1,m}) = \frac{-\text{Id}_Va^m}{m (q - q^{-1})}$ for $m >
  0$. Hence
$$L_V(z) = L_V^+(z)(\text{Id}_V\otimes T(az))L_V^-(z) (\pi_V\otimes
\text{Id}) (\mathcal{R}^\infty),$$
$$L_V^+(z) = \text{exp}_q\left((q^{-1} - q)\pi_V(x_{1,0}^+)\otimes
  x_{1,0}^-\right)\text{ , }
L_V^-(z) = \text{exp}_q\left((q -
  q^{-1})z\pi_V(k_1^{-1}x_{1,1}^-)\otimes x_{1,-1}^+ k_1\right).$$ Hence
$$t_V(z,u) = \sum_{r\geq 0} \frac{((q -
  q^{-1})z)^r}{[r]_q!}  \sum_{m\geq r}
u^{2m}\begin{bmatrix}m\\r\end{bmatrix}_q
q^{\frac{r(3-r)}{2}-rm} 
(x_{1,0}^-)^r T_1(z)
(x_{1,-1}^+k_1)^r k_1^{-m}.$$
When we specialize $x_{1,-1}^+$ on a simple finite-dimensional
representation $W$, it becomes nilpotent and the sum is finite. Then
we recover the formulas as in Section \ref{sl2ex}.
\end{example}


\begin{thebibliography}{ASM}

\bibitem[Ba]{Baxter} {R.J. Baxter},
\newblock Partition Function of the Eight-Vertex Lattice Model,
\newblock Ann. Phys. {\bf 70} (1971), 193--228.

\bibitem[BHK]{BHK} {V.V. Bazhanov, A.N. Hibberd and S.M. Khoroshkin},
\newblock Integrable structure of ${\mathcal W}_3$ conformal field
theory, quantum Boussinesq theory and boundary affine Toda theory,
\newblock Nucl. Phys. B {\bf 622} (2002), 475--547.

\bibitem[BLZ1]{BLZ} {V.V. Bazhanov, S.L. Lukyanov and
  A.B. Zamolodchikov},
\newblock Integrable structure of conformal field
theory. II. Q-operator and DDV equation,
\newblock Comm. Math. Phys. {\bf 190} (1997), 247--278.

\bibitem[BLZ2]{BLZ1} {V.V. Bazhanov, S.L. Lukyanov and
  A.B. Zamolodchikov},
\newblock Integrable structure of conformal field
theory. III. The Yang-Baxter Relation,
\newblock Comm. Math. Phys. {\bf 200} (1999), 297--324.

\bibitem[BR]{BR} {V.V. Bazhanov, N.Yu. Reshetikhin},
\newblock Restricted solid on solid models connected with simply laced
Lie algebra,
\newblock J. Phys. {\bf A 23} (1990), 1477--1492.

\bibitem[BT]{bt} {G. Benkart and P. Terwilliger}, 
\newblock Irreducible modules for the quantum affine algebra 
$U_q(\widehat{\mathfrak{sl}}_2)$ 
and its Borel subalgebra $U_q(\widehat{\mathfrak{sl}}_2)^{\ge 0}$, 
\newblock J. Algebra {\bf 282} (2004), 172--194.

\bibitem[Be]{bec} {J. Beck}, 
\newblock Braid group action and quantum affine algebras, 
\newblock Comm. Math. Phys. {\bf 165} 
(1994), 555--568.

\bibitem[C1]{Cha2} {V. Chari}, 
\newblock Minimal affinizations of representations of quantum groups:
the rank 2 case,
\newblock Publ. Res. Inst. Math. Sci. {\bf 31} (1995), no. 5,
873--911.

\bibitem[C2]{C1} {V. Chari},
\newblock On the fermionic formula and the Kirillov-Reshetikhin conjecture,
\newblock Int. Math. Res. Not. {\bf 2001} (2001), 629--654.

\bibitem[CP]{CP} {V. Chari and A. Pressley},
\newblock Guide to Quantum Groups,
\newblock Cambridge University Press, Cambridge, 1994.

\bibitem[Da1]{da} {I. Damiani}, 
\newblock La $\mathcal{R}$-matrice pour les alg\`ebres quantiques de
type affine non tordu,
\newblock Ann. Sci. Ecole Norm. Sup. {\bf 31}
(1998), 493--523.

\bibitem[Da2]{da2} {I. Damiani}, 
\newblock From the Drinfeld realization to the Drinfeld-Jimbo
presentation of affine quantum algebras: the injectivity,
\newblock Preprint 2013.

\bibitem[Dr]{Dri2} {V. Drinfel'd}, 
\newblock A new realization of Yangians and of 
quantum affine algebras, 
\newblock Soviet Math. Dokl. {\bf 36} (1988), 212--216.  

\bibitem[EFK]{efk} {P. Etingof, I. Frenkel and A. Kirillov},
\newblock Lectures on representation theory and Knizhnik-Zamolodchikov
equations,
\newblock Mathematical Surveys and Monographs {\bf
  58}. American Mathematical Society, Providence, RI, 1998.

\bibitem[FFR]{FFR} {B. Feigin, E. Frenkel and N. Reshetikhin},
\newblock Gaudin model, Bethe ansatz and critical level,
\newblock Comm. Math. Phys. {\bf 166} (1994), 27--62.

\bibitem[F]{F:icmp} {E. Frenkel},
\newblock Affine algebras, Langlands duality and Bethe ansatz,
\newblock in Proceedings of the International Congress of
Mathematical Physics, Paris, 1994, ed. D. Iagolnitzer, pp. 606--642,
International Press, 1995.

\bibitem[FM]{Fre2} {E. Frenkel and E. Mukhin}, 
\newblock Combinatorics of $q$-characters of finite-dimensional 
representations of quantum affine algebras, 
\newblock Comm. Math. Phys. {\bf 216}
(2001), 23--57.

\bibitem[FR1]{Fre} {E. Frenkel and N. Reshetikhin}, 
\newblock The $q$-characters of representations of quantum 
affine algebras and deformations of $W$-Algebras, 
\newblock in Recent Developments in Quantum Affine Algebras and 
related topics, 
\newblock 
Contemp. Math. {\bf 248} (1999), 163--205 (updated version
available at arXiv:math/9810055).

\bibitem[FR2]{ifre} {I. Frenkel and N. Reshetikhin}, 
\newblock Quantum affine algebras and holonomic difference equations, 
\newblock Comm. Math. Phys. {\bf 146} (1992), no. 1, 1--60.

\bibitem[GTL]{GTL} {S. Gautam and V. Toledano Laredo},
\newblock Yangians and quantum loop algebras II. Equivalence of
categories via abelian difference equations,
\newblock Preprint arXiv:1310.7318.

\bibitem[H1]{hmon}{D. Hernandez},
\newblock Monomials of $q$ and $q,t$-characters for non simply-laced
quantum affinizations,
\newblock Math. Z. {\bf 250} (2005), 443--473.

\bibitem[H2]{hcr} {D. Hernandez}, 
\newblock The Kirillov-Reshetikhin conjecture and solutions of T-systems, 
\newblock J. Reine Angew. Math. {\bf 596} (2006), 63--87.

\bibitem[H3]{hmin} {D. Hernandez},
\newblock On minimal affinizations of representations of quantum groups,
\newblock Comm. Math. Phys. {\bf 276} (2007), no. 1, 221--259.

\bibitem[H4]{h3} {D. Hernandez}, 
\newblock Simple tensor products, 
\newblock Invent. Math. {\bf 181} (2010), 649--675.

\bibitem[HJ]{HJ} {D. Hernandez and M. Jimbo},
\newblock Asymptotic representations and Drinfeld rational fractions,
\newblock Compos. Math. {\bf 148} (2012), no. 5, 1593--1623.

\bibitem[HL]{HL} {D. Hernandez and B. Leclerc},
\newblock A cluster algebra approach to $q$-characters of
Kirillov-Reshetikhin modules,
\newblock Preprint arXiv:1303.0744

\bibitem[HN]{hn} {D. Hernandez and H. Nakajima},
\newblock Level 0 monomial crystals, 
\newblock Nagoya Math. J. {\bf 184} (2006), 85--153. 

\bibitem[JMS]{jms} {M. Jimbo, T. Miwa and F. Smirnov}, \newblock
  Fermions acting on quasi-local operators in the XXZ model, \newblock
  in Symmetries, integrable systems and representations, 243–261,
  Springer Proc. Math. Stat., {\bf 40}, Springer, Heidelberg, 2013.

\bibitem[Ka]{ka}{V. Kac},
\newblock Infinite Dimensional Lie Algebras, 3rd ed.,
\newblock Cambridge University Press, Cambridge, 1990.

\bibitem[Ko]{Ko}{T. Kojima}, 
\newblock The Baxter's $Q$ operator for the $W$ algebra $W_N$, 
\newblock J. Phys. \textbf{A 41} (2008), 355206.

\bibitem[KS]{KS} {A. Kuniba and J. Suzuki},
\newblock Analytic Bethe Ansatz for Fundamental Representations of
Yangians,
\newblock Comm. Math. Phys. {\bf 173} (1995), 225---264.

\bibitem[LSS]{ls}{S. Levendorskii, Y. Soibelman and V. Stukopin},
  \newblock The quantum Weyl group and the universal quantum
    R-matrix for affine Lie algebra $A^{(1)}_1$,
 \newblock Lett. Math. Phys. {\bf 27} (1993), no. 4, 253--264.

\bibitem[L1]{Lieb1} E. Lieb,
\newblock Residual Entropy of Square Ice,
\newblock Phys. Rev. {\bf 162} (1967), 162--172.

\bibitem[L2]{Lieb2} E. Lieb,
\newblock Exact Solution of the $F$ Model of an
  Antiferroelectric,
\newblock Phys. Rev. Lett. {\bf 18} (1967), 1046--1048.

\bibitem[L3]{Lieb3} E. Lieb,
\newblock Exact Solution of the Two-Dimensional
  Slater KDP Model of a Ferroelectric,
\newblock Phys. Rev. Lett. {\bf 19} (1967), 108--110.

\bibitem[MO]{mo}{D. Maulik and A. Okounkov}
\newblock Quantum Groups and Quantum Cohomology,
\newblock {Preprint arXiv:1211.1287}.

\bibitem[N1]{Nak} {H. Nakajima},
\newblock Quiver varieties and finite dimensional
  representations of quantum affine algebras,
\newblock J. Amer. Math. Soc. {\bf 14} (2001), 145--238.

\bibitem[N2]{n} {H. Nakajima},
\newblock t-analogs of q-characters of Kirillov-Reshetikhin 
modules of quantum affine
algebras, 
\newblock Represent. Theory {\bf 7} (2003), 259--274.

\bibitem[NPS]{NPS} {N. Nekrasov, V. Pestun and S. Shatashvili},
\newblock Quantum geometry and quiver gauge theories,
\newblock Preprint arXiv:1312.6689.

\bibitem[NS]{NS} {N. Nekrasov and S. Shatashvili},
\newblock Supersymmetric vacua and Bethe ansatz,
\newblock {Nuclear Phys. B Proc. Suppl. {\bf 192/193} (2009), 91--112}.

\bibitem[R1]{R1} {N. Reshetikhin,}
\newblock A Method Of Functional Equations In The Theory Of Exactly
Solvable Quantum Systems,
\newblock Lett. Math. Phys. {\bf 7} (1983), 205--213.

\bibitem[R2]{R2}  {N. Reshetikhin,}
\newblock Integrable Models of Quantum One-dimensional Magnets With
$O(N)$ and $Sp(2k)$ Symmetry,
\newblock Theor. Math. Phys. {\bf 63} (1985), 555--569.

\bibitem[R3]{R3} {N. Reshetikhin,}
\newblock The spectrum of the transfer matrices connected with
Kac--Moody algebras,
\newblock Lett. Math. Phys. {\bf 14} (1987), 235--246.

\bibitem[Sun]{S} {J. Sun},
\newblock  Polynomial Relations for q-Characters
via the ODE/IM Correspondence,
\newblock SIGMA {\bf 8} (2012), 028.

\bibitem[Sut]{Sut} B. Sutherland,
\newblock Exact Solution of a Two-Dimensional Model for Hydrogen-Bonded
Crystals,
\newblock Phys. Rev. Lett. {\bf 19} (1967), 103--104.

\bibitem[V]{Var} {M. Varagnolo},
\newblock Quiver varieties and Yangians,
\newblock Lett. Math. Phys. {\bf 53} (2000), 273--283.

\end{thebibliography}
\end{document}